\theoremstyle{plain}
\newtheorem{thm}{\protect\theoremname}[section]
  \theoremstyle{definition}
  \newtheorem{defn}[thm]{\protect\definitionname}
  \newtheorem{prob}[thm]{Problem}
  \newtheorem{cons}[thm]{Construction}
      \newtheorem{rem}[thm]{\protect\remarkname}
  \newtheorem{warn}[thm]{Warning}
  \newtheorem{scenario}[thm]{Scenario}
  \theoremstyle{remark}
  \theoremstyle{plain}
  \newtheorem{prop}[thm]{\protect\propositionname}
  \theoremstyle{plain}
  \newtheorem{lem}[thm]{\protect\lemmaname}
   \newtheorem{schol}[thm]{Scholium}
  \theoremstyle{plain}
  \newtheorem{cor}[thm]{\protect\corollaryname}
  \numberwithin{equation}{section}
  \providecommand{\corollaryname}{Corollary}
  \providecommand{\definitionname}{Definition}
  \providecommand{\lemmaname}{Lemma}
  \providecommand{\propositionname}{Proposition}
  \providecommand{\remarkname}{Remark}
\providecommand{\theoremname}{Theorem}
\begin{document}

\title{An enhanced six-functor formalism for diamonds and v-stacks
}

\author{Daniel Gulotta, David Hansen and Jared Weinstein}

\maketitle

\abstract{This article extends the six functor formalism for diamonds \cite{ECoD} to a very general class of stacky morphisms between v-stacks, using $\infty$-categorical techniques developed by Liu-Zheng \cite{LZ}.}

\tableofcontents

%
%

\section{Introduction}

\subsection{Context and motivation}

Grothendieck's six functors refer to the operations $f^*,Rf_*,Rf_!,Rf^!$, $\otimes$, and internal Hom in a suitable derived category of sheaves on some category of geometric objects.  In the original context, the category was schemes, but for some applications one may wish to extend the formalism to Artin stacks.  Considerable problems arise in defining $Rf_!$ when $f$ is a stacky morphism, owing to issues of homotopy coherence.  These problems were overcome in the work of Lu-Zheng \cite{LZ}, which applies $\infty$-categorical techniques to extend the full formalism to Artin stacks.

This article solves the analogous problem in the setting of perfectoid spaces and diamonds.  Fix a prime $p$ and a ring $\Lambda$ killed by some integer prime
to $p$. In a very important paper \cite{ECoD}, Scholze laid foundations
for the six functor formalism in the \'etale cohomology of diamonds. In particular,
he defined a triangulated category of \'etale sheaves $D_{\mathrm{\acute{e}t}}(X,\Lambda)$
for any small v-stack $X$, agreeing with the left-completion of $D(X_{\mathrm{\acute{e}t}},\Lambda)$
when e.g. $X$ is an analytic adic space, and equipped with the usual
$\otimes$ and $R\mathscr{H}\mathrm{om}$ operations. Moreover, for
any morphism $f:X\to Y$ he constructed a pair of adjoint functors
$f^{\ast}$ and $Rf_{\ast}$, and for morphisms $f$ which are \emph{compactifiable,
representable in locally spatial diamonds, and locally of finite dim.trg
}he constructed the exceptional pushforward and pullback functors
$Rf_{!}$ and $Rf^{!}$ with all their expected properties: composability,
base change, the projection formula, etc. Finally, he showed that there
is a reasonable notion of ``cohomologically smooth'' morphisms, for which $Rf^{!}$
is an invertible twist of $f^{\ast}$.

For applications it is important to extend Scholze's construction of $Rf_!$ and $Rf^!$ to some $f$ which are stacky (i.e., not representable).\footnote{A simple example is the structure map $f\colon [\ast/\underline{G(\mathbf{Q}_p)}]\to \ast$, where $G/\mathbf{Q}_p$ is a linear algebraic group.  Here one expects $Rf_!$ to be group homology (whereas $Rf_{\ast}$ is group cohomology). See \cite[Example 4.2.7]{HKW}.}  In particular, the results on the Kottwitz conjecture proved in \cite{HKW}
make crucial use of the $!$-functor formalism with its expected
properties for certain stacky maps of Artin v-stacks.\footnote{In fact, despite numerous attempts, DH and JW were never able to arrange
the arguments in \cite{HKW} in such a way as to avoid the use of a
stacky $!$-functor formalism.} In the present paper, we extend the !-functor formalism of \cite{ECoD}
to a specific class of stacky maps, as required by \cite{HKW}. We expect this
extended formalism to have many other applications.

\subsection{Main results}

In this section we give a precise statement of our main result.  We first define the category of geometric objects to which our formalism applies.

\begin{defn}\label{decent}
A small v-stack $X$ is \emph{decent} if it satisfies the following
two conditions:
\end{defn}

\begin{enumerate}
\item[(1)] For any locally separated locally spatial diamond $T$ with a map
$T\to X\times X$, the pullback $T\times_{X\times X,\Delta}X$ is
a locally separated locally spatial diamond.
\item[(2)] There exists a locally separated locally spatial diamond $U$ together
with a surjective map $f:U\to X$ which is representable in locally
spatial diamonds and which locally on $U$ is separated and cohomologically
smooth.
\end{enumerate}
Any choice of $f:U\to X$ as in (2) will be called a \emph{chart }of
$X$.
\begin{rem}\label{decentremarks}
i. In (1) it is enough to quantify over all separated spatial $T$. Moreover, (1) implies
that for any locally separated locally spatial diamonds $T,S$ with
maps $T\to X\leftarrow S$, the fiber product $T\times_{X}S$ is a
locally separated locally spatial diamond. Indeed, this fiber product
can be written as $(T\times S)\times_{X\times X,\Delta}X$, so the
claim follows by observing that $T\times S$ is a locally separated
locally spatial diamond.

ii. In (2), we can assume without loss of generality that $U$ is a separated locally spatial
diamond and that $f$ is separated and cohomologically smooth, for instance
by replacing a given chart $f$ by the composition $\coprod U_{i}\to U\overset{f}{\to}X$
for some open cover $U=\bigcup U_{i}$ by separated locally spatial
diamonds such that all restrictions $f|_{U_{i}}$ are separated and cohomologically smooth. If these conditions hold, we say $f:U \to X$ is a \emph{clean chart}.
\end{rem}

We will see (Proposition \ref{decentartin}) that decent v-stacks are Artin v-stacks in the sense of
\cite[Definition IV.1.1]{FS}. Generally, the condition of being decent is a very mild restriction. For instance, all locally separated locally spatial
diamonds are decent; in particular, for any analytic adic space $X$ over $\mathrm{Spa}\,\mathbf{Z}_{p}$,
the associated diamond $X^{\lozenge}$ is decent.
More generally, if $X$ is any locally separated v-sheaf such that there
exists a separated locally spatial diamond $U$ with a surjective cohomologically
smooth map $U\to X$, then $X$ is decent. This applies, for instance,
to $X=\Spd\mathbf{Z}_{p}$ and $X=\Div^1$.  Neither of these is a diamond, but in both cases, $U=X\times \Spd \mathbf{F}_p\laurentseries{t^{1/p^\infty}}$ is representable by an analytic adic space over $\mathbf{Z}_p$ (and is thus a locally spatial diamond), and $U\to X$ is surjective and cohomologically smooth.

Continuing this line of thought, one
can check that all Artin v-stacks appearing in \cite{FS} and \cite{HKW}
are decent, and in fact it takes some work to find an example of an Artin v-stack which is not decent.
\begin{defn}\label{fine}
i. A morphism $f:X\to Y$ between decent v-stacks is \emph{fine} if there exists a commutative diagram
\[
\xymatrix{W\ar[d]_{b}\ar[r]^{g} & V\ar[d]^{a}\\
X\ar[r]_{f} & Y
}
\]
where the vertical maps are charts and $g$ is locally on $W$ compactifiable
of finite dim.trg.

ii. A morphism $f:X\to Y$ between decent v-stacks is \emph{$\ell$-cohomologically
smooth} if there exists a commutative diagram
\begin{equation}
\label{EqCommDiagFine}
\xymatrix{W\ar[d]_{b}\ar[r]^{g} & V\ar[d]^{a}\\
X\ar[r]_{f} & Y
}
\end{equation}
where the vertical maps are charts and $g$ is locally on $W$ compactifiable
of finite dim.trg. and $\ell$-cohomologically smooth in the sense
of \cite{ECoD}.
\end{defn}

We will see that these classes of morphisms are quite reasonable:
they are stable under composition and base change, membership can
be tested smooth-locally on the source and target, etc. We note that the name ``fine'' is chosen to hint at the idea of being ``locally of finite type'', and also because these morphisms are ``good enough'' for all practical purposes.

The main result of this paper is the following theorem.
\begin{thm}\label{mainthm}
Let $\Lambda$ be a ring killed by some integer $n$ prime to $p$. If $f:X\to Y$ is any fine map of decent v-stacks, there is natural functor $Rf_{!}:D_{\mathrm{\acute{e}t}}(X,\Lambda)\to D_{\mathrm{\acute{e}t}}(Y,\Lambda)$ satisfying the following properties:
\begin{enumerate}
\item[(1)] When $f$ is separated and representable in locally spatial diamonds, $Rf_!$ coincides with the functor constructed in \cite{ECoD}.
\item[(2)] There is a natural isomorphism of functors $R(f\circ g)_!\isom Rf_!\circ Rg_!$ whenever fine morphisms $f$ and $g$ are composable.  More precisely, the assignments $X\mapsto D_{\et}(X,\Lambda)$ and $f\mapsto Rf_!$ upgrade to a pseudo-functor from the 2-category of decent v-stacks with fine morphisms, to the 2-category of triangulated categories.
\item[(3)] The projection formula:  there is a natural isomorphism $Rf_!(A\otimes f^*B) \cong Rf_!A \otimes B$ for $A\in D_{\et}(X,\Lambda)$ and $B\in D_{\et}(Y,\Lambda)$.  
\item[(4)] Proper base change:  For a cartesian square of decent v-stacks
\[
\xymatrix{
X' \ar[r]^{\tilde{g}} \ar[d]_{f'} & X \ar[d]^f \\
Y'\ar[r]_{g}& Y 
}
\]
with $f$ fine, there is a proper base change isomorphism $g^*Rf_! \isom Rf'_!\tilde{g}^*$.
\item[(5)] $Rf_!$ admits a right adjoint $Rf^!$.  
\end{enumerate}
\end{thm}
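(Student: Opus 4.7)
The plan is to reduce Theorem \ref{mainthm} to an application of the Liu-Zheng $\infty$-categorical descent machinery \cite{LZ}, suitably adapted to the v-stack setting. The overarching strategy: first, lift Scholze's formalism from \cite{ECoD} to a coherent $\infty$-categorical pseudo-functor defined on the class of compactifiable, representable, locally finite-dim.trg morphisms of decent v-stacks; then, extend this pseudo-functor to fine morphisms by cohomologically smooth descent along charts.

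First I would take care of the $\infty$-categorical upgrade. The triangulated categories $D_{\et}(X,\Lambda)$ are homotopy categories of presentable stable $\infty$-categories, and Scholze's constructions of $f^{\ast},Rf_{\ast},Rf_!,Rf^!,\otimes,R\mathscr{H}\mathrm{om}$ all arise from $\infty$-categorical operations (adjoints, extension-by-zero, left Kan extension along compactifications). The goal is to organize these into a symmetric monoidal $\infty$-functor from an appropriate $\infty$-category of correspondences of decent v-stacks (with compactifiable representable finite-dim.trg ``wrong-way'' arrows) to $\mathrm{Pr}^{L}_{\mathrm{st}}$, encoding composability, base change, and the projection formula coherently. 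This step, together with verifying the formal stability properties of fine morphisms (closure under composition and base change, smooth-local nature, and the existence of common refinements of any two charts), sets up the input data required by Liu-Zheng.

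The crucial geometric input for the descent is that, for any clean chart $b:W\to X$ of a decent v-stack, the pullback $b^{\ast}$ exhibits $D_{\et}(X,\Lambda)$ as the totalization of the cosimplicial $\infty$-category $D_{\et}(W^{\bullet/X},\Lambda)$ coming from the \v{C}ech nerve of $b$; this is v-descent, proved in \cite{ECoD}. With this in hand, we construct $Rf_!$ for fine $f$ as follows: given a diagram as in \eqref{EqCommDiagFine}, the map $g$ induces a cosimplicial morphism $g_{\bullet}:W^{\bullet/X}\to V^{\bullet/Y}$ whose terms remain compactifiable and of finite dim.trg (by base-change closure), and the termwise functors $R(g_{n})_{!}$ assemble into a morphism between the descent diagrams, whose limit is the desired $Rf_!:D_{\et}(X,\Lambda)\to D_{\et}(Y,\Lambda)$. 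Independence of the choice of charts follows by the standard common-refinement argument.

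The verification of (1)--(5) is then formal. Property (1) holds by construction, since for $f$ itself compactifiable and representable one may take charts to be identities up to refinement. Properties (2), (3), (4) all reduce termwise on \v{C}ech nerves to the corresponding statements for representable morphisms, which are established in \cite{ECoD}. Property (5) follows from the adjoint functor theorem once one notes that $Rf_!$ preserves small colimits, again inherited termwise from the representable case. The main obstacle is the $\infty$-categorical bookkeeping in the first step: Liu-Zheng's formalism is precisely designed to automate this, but one still has to verify that the axiomatic framework of \cite{LZ} is satisfied in our setting, in particular that the interaction of decent v-stacks, clean charts, and the admissible morphisms is well-behaved under the pullbacks and compositions produced by their machinery. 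A secondary technical subtlety, to be navigated with care, is that the map $g$ in Definition \ref{fine} is only \emph{locally} compactifiable on $W$; this is accommodated by refining the chart $b$ to an open cover of $W$ over which $g$ becomes globally compactifiable, and observing that the descent procedure is insensitive to such refinements.
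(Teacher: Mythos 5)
Your high-level plan — use Liu--Zheng's $\infty$-categorical machinery to lift Scholze's $Rf_!$ to a coherent enhanced operation, then extend to fine stacky morphisms by descent along charts — does match the paper's overall strategy. But there is a genuine gap in your ``first step'' that the paper spends a whole section (3.1) addressing, and you never confront it.

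To lift Scholze's $Rf_!$ to the Liu--Zheng framework, one must factor compactifiable maps as $p\circ j$ with $j$ an open immersion and $p$ proper, and then apply the categorical gluing arguments of \cite{LZ2} together with cohomological-amplitude bounds on $Rp_\ast$ in order to pass to adjoints coherently. The problem is that if $f\colon X\to Y$ is representable in (locally) spatial diamonds, the canonical compactification $\overline{X}^{/Y}\to Y$ is \emph{not known} to be representable in (locally) spatial diamonds, so the bounds from \cite{ECoD} do not directly apply and the gluing step stalls. The paper resolves this by introducing \emph{prespatial diamonds} — a strictly larger class, designed to be stable under canonical compactification (Proposition \ref{prespatialstable}) — and proving a priori cohomological-amplitude bounds in that generality (Proposition \ref{prespatialkeybound}). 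Without this, your plan to ``lift Scholze's formalism from \cite{ECoD} to a coherent $\infty$-categorical pseudo-functor'' is not executable as written.

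A second issue: you assert that the \v{C}ech nerve $W^{\bullet/X}$ of a chart has terms that ``remain compactifiable and of finite dim.trg,'' but even if $W\to X$ is chosen separated, the iterated fiber products $W\times_X\cdots\times_X W$ are typically only \emph{locally separated} locally spatial diamonds, and Scholze's $!$-formalism is set up for separated (or at least quasi-separated) representable morphisms. This is exactly why the paper runs DESCENT in three successive stages — from separated spatial diamonds, to quasi-separated locally separated locally spatial diamonds, to locally separated locally spatial diamonds, to decent v-stacks — rather than in a single step along charts as you propose. Your ``secondary technical subtlety'' about refining $b$ by an open cover touches on this but does not resolve it, since the refinement only worsens separatedness. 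Finally, a minor imprecision: $Rf_!$ should arise as a \emph{colimit} (over $\boldsymbol{\Delta}$) of $R(f\circ g_n)_!Rg_n^!$, matching the cohomological-codescent heuristic, not as the ``limit'' of a morphism between descent diagrams; the translation between the two pictures (via $Rg_n^!\simeq g_n^\ast\otimes Rg_n^!\Lambda$ for smooth $g_n$) is itself part of what needs to be proved (Proposition \ref{keydescent}).
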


A heuristic for the construction of $Rf_{!}$ involves \emph{cohomological codescent.
}  To explain the idea, let $f:X\to Y$ be a fine map between decent
v-stacks, and let $g:U\to X$ be a chart. Set \[g_{n}:U_{n}=\underbrace{U\times_{X}U\cdots\times_{X}U}_{n+1}\to X,\]
so $U_{n}$ is a locally separated locally spatial diamond, and $g_{n}$
and $f\circ g_{n}$ are all fine and 0-truncated. Suppose that we
already have access to $Rh_{!}$ for fine 0-truncated maps $h$. We would
then like to define $Rf_{!}$ by the formula
\begin{equation}
\label{EqRfHeuristic}
Rf_{!}A=\mathrm{colim}_{n \in\boldsymbol{\Delta}}R(f\circ g_{n})_{!}Rg_{n}^{!}A.
\end{equation}
where $\boldsymbol{\Delta}$ is the simplex category.
One can deduce formally from \eqref{EqRfHeuristic} that proper base change and the projection formula hold for $Rf_!$.  For instance, to establish the projection formula:
\begin{alignat*}{2}
Rf_!(A\otimes f^*B)
\isom& 
\colim_{n\in\boldsymbol{\Delta}} R(f\circ g_n)_! Rg_n^! (A\otimes f^*B) && \\
\isom& \colim_{n\in\boldsymbol{\Delta}} R(f\circ g_n)_! (Rg_n^!A \otimes Rg_n^*f^*B) 
&& \text{\;\;(smoothness of $g_n$)} \\
\isom& \colim_{n\in\boldsymbol{\Delta}} R(f\circ g_n)_!Rg_n^!A \otimes B &&\text{\;\;(projection formula)}\\
\isom& Rf_!A \otimes B &&\text{\;\;($\otimes$ is closed)}
\end{alignat*}

The heuristic \eqref{EqRfHeuristic} follows formally from the expectation that the map
\begin{equation}
\label{EqCohoCodenscent}
\mathrm{colim}_{n\in\boldsymbol{\Delta}}Rg_{n}{}_{!}Rg_{n}^{!}A\to A
\end{equation}
should be an isomorphism, i.e. that ``cohomological codescent for
surjective smooth maps'' should hold.  To see the implication, apply $Rf_{!}$ to the equivalence in 
\eqref{EqCohoCodenscent} to obtain
\begin{align*}
Rf_{!}A & \simeq Rf_{!}\mathrm{colim}_{n\in\boldsymbol{\Delta}}Rg_{n}{}_{!}Rg_{n}^{!}A\\
 & \simeq\mathrm{colim}_{n\in\boldsymbol{\Delta}}Rf_{!}Rg_{n}{}_{!}Rg_{n}^{!}A\\
 & \simeq\mathrm{colim}_{n\in\boldsymbol{\Delta}}R(f\circ g_{n})_{!}Rg_{n}^{!}A,
\end{align*}
using the fact that $Rf_{!}$ is a left adjoint to pass
it across the colimit.

There are two main difficulties with making sense of this heuristic.
First and foremost, the colimit in \eqref{EqRfHeuristic} is not guaranteed to exist, let alone behave naturally as in Theorem \ref{mainthm}(2), since colimits in triangulated categories are generally ill-behaved.    

The other main difficulty is that even if $g$ as above was chosen to
be separated (which is always possible), $g_{n}$ and $f\circ g_{n}$
will only be locally separated in general, so the formalism in \cite{ECoD}
does not apply to construct $R(f\circ g_{n})_{!}$ and $Rg_{n}^{!}$.  

The remedy for the first difficulty is to upgrade the functors $R(f\circ g_{n})_{!}$ and $Rg_{n}^{!}$ to an $\infty$-categorical setting where colimits are well-behaved. This will be accomplished using the machinery of {\em enhanced operation maps} developed
in \cite{LZ, LZ2} in the context of schemes. We will review this machinery in a more detail in \ref{SectionEOMaps}, but for the moment we present a brief outline of how enhanced operations were used to produce the six functor formalism for Artin stacks.

Let $\CC$ be a category of geometric objects (e.g., schemes or Artin stacks) equipped with some marked classes of morphisms $\E_1,\dots,\E_n$, and a subset $I\subset \set{1,\dots,n}$.  An enhanced operation map for $(\CC,\E_1,\dots,\E_n,I)$ associates to each object $X\in \CC$ a symmetric monoidal $\infty$-category $\D(X)$, and it also associates to each morphism $X\to Y$ in $\E_i$ a functor $\D(X)\to \D(Y)$ (if $i\not\in I$) or a functor $\D(Y)\to \D(X)$ (if $i\in I$).  This must be done in such a way that is (a) compatible with cartesian diagrams built out of morphisms in the $\E_i$ and (b) compatible with the symmetric monoidal structure on $\D(X)$.

The main theorem of \cite{LZ} asserts the existence of an enhanced operation map for (Artin stacks, locally finite type morphisms, all morphisms, $\set{2}$), which encodes the operations $f^*$ and $Rf_!$.  The compatibilities inherent in the enhanced operation map imply both (a) proper base change and (b) the projection formula for these functors, in a homotopy coherent way.  The required enhanced operation map is built in stages. 

\begin{enumerate}

\item[(1)] In the first stage it is observed that an enhanced operation map (EOM) exists for (schemes, all morphisms, $\set{1}$). This is essentially just the statement that pullbacks $f^*\from \D(Y)\to \D(X)$ preserve symmetric monoidal structure.  It is a special case of a general theorem on enhanced operations for ringed topoi \cite[\S 2.2]{LZ}.  
We restrict this to an EOM for (qc separated schemes, locally finite type morphisms, $\set{1}$)

\item[(2)] In the next stage, we apply the fact that every morphism of qc separated schemes $f\from X\to Y$ which is locally of finite type can be factored as $p\circ j$, where $j$ is an open immersion and $p$ is proper.  A gluing technique \cite[Theorem 5.4]{LZ2} allows us to extend the EOM for (schemes, locally finite type morphisms, $\set{1}$) to (qc separated schemes, proper morphisms, local isomorphisms,$\set{1,2}$) and even to (qc separated schemes, proper morphisms, local isomorphisms, all morphisms, $\set{1,2,3}$).  So far we are still only encoding structures associated to the pullback functor $f^*$, but in a way that ``remembers'' all possible factorizations of $f$ as $p\circ j$.

\item[(3)] Heuristically, if $f=p\circ j$ as above then $Rf_!$ should be defined as $Rp_*\circ Rj_!$, where $Rp_*$ is right adjoint to $p^*$ and $Rj_!$ is left adjoint to $j^*$.  \cite[Proposition 1.4.4]{LZ2} is the abstract input required to pass from an EOM on (qc separated  schemes, proper morphisms, local isomorphisms, all, $\set{1,2,3}$) to an EOM on (qc separated  schemes, proper morphisms, local isomorphisms, all, $\set{3}$), i.e., the arrows have been reversed for the first two classes of morphisms.

\item[(4)] The same gluing technique as in (2) applied in reverse allows us to transfer the EOM to (qc separated schemes, locally finite type morphisms, all, $\set{2}$). By now, we have an $\infty$-categorical enhancement of $Rf_!$ for $f$ a morphism of coproducts of qcqs schemes which is locally of finite type.

\item[(5)] The ``DESCENT'' program developed in \cite{LZ} is a means of extending an EOM from one marked category $\CC$ to a marked overcategory $\tilde{\CC}$.  The input requires that for every object $X\in \tilde{\CC}$, there exists a marked morphism $Y\to X$ with $Y$ in $\CC$.  It is also required that the marked morphisms be of ``universal descent'' with respect to the EOM.

Repeated calls to DESCENT allow us to extend our EOM to the following domains:
\begin{enumerate}
\item From qc separated schemes to qs schemes,
\item From qs schemes to algebraic spaces,
\item From algebraic spaces to Artin stacks.
\end{enumerate}
The final output is the desired EOM on (Artin stacks, locally finite type morphisms, all morphisms, $\set{2}$).
\end{enumerate}

Our main result follows a similar strategy.  We highlight the major differences:

\begin{enumerate}

\item[(1)] In the first stage we have an EOM for (small coproducts of qcqs v-sheaves, all morphisms, $\set{1}$).  

\item[(2)] In the second stage, we start with the observation that if $f\from X\to Y$ is a morphism of qcqs v-sheaves which is representable in locally spatial diamonds and compactifiable of locally finite dim.trg, then there is a factorization $f = p\circ j$, where $j$ is an open immersion and $p$ is proper.  In fact there is a canonical compactification $j\from X\to \overline{X}^{/Y}$ \cite[Proposition 18.6]{ECoD}.  We encounter a substantial problem here:  we have little control over $\overline{X}^{/Y}$, and in fact $p$ is not necessarily representable in locally spatial diamonds.  We resolve this issue by introducing a larger auxiliary class of {\em prespatial diamonds}, which is preserved under passing to the canonical compactification rather by design.
The result is an EOM on (small coproducts of qcqs v-sheaves, proper morphisms representable in prespatial diamonds, separated local isomorphisms, $\set{1,2}$).  Let us emphasize that in this step, we make heavy use of the main cohomological results in \cite{ECoD}.

\item[(3)-(4)] These steps are similar to those in the scheme setting.  The result is an EOM on (small coproducts of qcqs v-sheaves, morphisms representable in locally spatial diamonds and compactifiable of locally finite dim.trg, $\set{2}$).

\item[(5)] The DESCENT program is applied repeatedly to extend the EOM to the following domains:
\begin{enumerate}
\item From small coproducts of separated spatial diamonds to quasiseparated locally separated locally
spatial diamonds,
\item From quasiseparated locally separated locally spatial diamonds to locally separated locally
spatial diamonds.
\item From locally separated locally spatial diamonds to decent v-stacks (with fine morphisms).
\end{enumerate}
The final output is the EOM which encodes the operation $Rf_!$ for fine morphisms between v-stacks;  this is what is necessary to prove Theorem \ref{mainthm}.
\end{enumerate}

%

\subsection{Comments and conventions}

Most readers of this article should simply take Theorem \ref{mainthm} as a black box. However, for the scrupulous reader, we recommend having \cite{LZ} and \cite{LZ2} close at hand. Not only will we heavily use the machinery introduced there, but we will borrow much notation from these papers, sometimes without comment. 

We need to heavily use $\infty$-categorical techniques. As in \cite{LZ, LZ2}, we use Lurie's model: an $\infty$-category is a simplicial set satisfying the weak Kan condition.  We often conflate ordinary categories with $\infty$-categories by identifying a category $\mathcal{C}$ with its nerve $\mathrm{N}(\mathcal{C})$. Likewise, we often conflate (2,1)-categories with $\infty$-categories by identifying a $(2,1)$-category $\mathcal{C}$ with its Duskin nerve, which we again denote $\mathrm{N}(\mathcal{C})$ (see \cite[Tag 009P]{Kerodon} for some discussion of this notion). We will usually omit the nerve functor from our notation (in a departure from the convention in \cite{LZ}).

\subsection{Acknowledgments}

DH would like to thank Johan de Jong, Yifeng Liu, Jacob Lurie, Lucas Mann, Peter Scholze, and Weizhe Zheng for some extremely helpful conversations related to this material. DH is also very grateful to his wife for her exceptional patience during several unusually intense periods of work on this project.

\section{Statement of main result}

\subsection{The notion of an enhanced operation map}
\label{SectionEOMaps}

A marked $\infty$-category is a pair $(\CC,\FF)$, where
$\mathcal{C}$ is a geometric $\infty$-category \cite[Definition 4.1.3]{LZ}, and $\FF$ is a set of morphisms of $\CC$ stable under composition, arbitrary pullback, and small coproducts.
The reader should imagine that $\CC$ is (the nerve of) some
ordinary category of geometric significance: the category of small
coproducts of quasicompact separated schemes, the category of locally
spatial diamonds, etc. Consider the following scenario, which is typical
of what one sees in a six-functor formalism.

\begin{scenario}\label{Scenario} Let $(\CC,\FF)$ be a marked $\infty$-category. 
\begin{enumerate}
\item[(1)] For all objects $X\in\mathcal{C}$, we have an associated closed
symmetric monoidal stable $\infty$-category $\mathcal{D}(X)$, which
the reader should imagine as the derived ($\infty$-)category of sheaves
on some ringed site associated with $X$. Moreover, we have an internal
hom bifunctor $R\mathscr{H}\mathrm{om}(-,-)$ such that $R\mathscr{H}\mathrm{om}(B,-)$
is right-adjoint to $-\otimes B$ for all $B\in\mathcal{D}(X)$.
\item[(2)] For any morphism $f:X\to Y$ we have a symmetric monoidal pullback
functor $f^{\ast}:\mathcal{D}(Y)\to\mathcal{D}(X)$.
\item[(3)] For any morphism $p:X\to Y$ with $p\in\mathcal{F}$ we have an exceptional
pushforward functor $p_{!}: \mathcal{D}(X) \to \mathcal{D}(Y)$.
\item[(4)] The functors $f^{\ast}$ and $p_{!}$ commute with all direct sums,
and therefore admit right adjoints $f_{\ast}$ resp. $p^{!}$.
\item[(5)] For any cartesian square
\begin{equation}
\label{EqCartesianSquare}
\xymatrix{X'\ar[r]^{g}\ar[d]_{q} & X\ar[d]^{p}\\
Y'\ar[r]_{f} & Y
}
\end{equation}
with $p\in\mathcal{F}$, there is a proper base change isomorphism
$f^{\ast}p_{!}\simeq q_{!}g^{\ast}$.
\item[(6)] There is a projection formula isomorphism $p_{!}A\otimes B\simeq p_{!}(A\otimes p^{\ast}B)$ for $A\in \D(X)$, $B\in \D(Y)$.
\end{enumerate}
\end{scenario}
We now consider the following problem:
\begin{prob}
Give a sensible way of cleanly encoding all of the structures in Scenario \ref{Scenario},\emph{
together} with all of their expected higher coherences with respect to composition.
\end{prob}

In this section we spell out one solution to this problem, closely
following the ideas of \cite{LZ}, namely the notion of an \emph{enhanced
operation map.} Before making a formal definition, we recall two $\infty$-categorical constructions from \cite{HA} and \cite{LZ2}.   The first involves symmetric monoidal structures on general $\infty$-categories, and the second has to do with the way we will encode base change isomorphisms on the $\infty$-categorical level.

\begin{cons}[Symmetric monoidal $\infty$-categories and commutative algebra objects] \label{ConsSymmetricMonoidal}

We review the definition of symmetric monoidal $\infty$-category \cite[Definition 2.0.0.7]{HA}.  Let $\Fin$ be the category of pointed finite sets, with objects $\class{n}=\set{\ast,1,\dots,n}$, and where the morphisms $\class{m}\to\class{n}$ are those functions which preserve $\ast$.   (Equivalently, $\Fin$ is the category of finite sets where the morphisms are partially defined functions.) 

A {\em symmetric monoidal $\infty$-category} is a coCartesian fibration of simplicial sets $p\from \CC^{\otimes} \to \N(\Fin)$, satisfying a certain condition ($\ast$).  For $i=1,\dots,n$, let $\rho^i\from \class{n}\to \class{1}$ be the unique morphism with $(\rho^i)^{-1}(1)=\set{i}$.  Since $p$ is a coCartesian fibration, $\rho^i$ induces a functor $\rho^i_!\from \CC^{\otimes}_{\class{n}}\to \CC^{\otimes}_{\class{1}}$.  The condition ($\ast)$ is that the product of the $\rho^i_!$ is an equivalence $\CC^{\otimes}_{\class{n}}\to (\CC^{\otimes}_{\class{1}})^n$ for all $n$. We write $\CC=\CC^{\otimes}_{\class{1}}$ and call it the underlying $\infty$-category of $\CC^{\otimes}$.  We may sometimes start with an $\infty$-category $\CC$, and say that $\CC^{\otimes}$ constitutes a symmetric monoidal structure on $\CC$, the map $p\from \CC^{\otimes}\to \N(\Fin)$ being understood.

In this situation we have a functor $\underline{\;\;}\otimes \underline{\;\;} \from \CC\times \CC \isom \CC^{\otimes}_{\class{2}} \to \CC$, where the last map is induced from the unique active map $\alpha\from \class{2}\to \class{1}$;  this is the composition law for $\CC$.  The rest of the structure of $\CC^{\otimes}$ furnishes isomorphisms witnessing the fact that $\otimes$ is unital, commutative, and associative, and it also encodes all higher coherences among those isomorphisms.

A {\em commutative algebra object} of a symmetric monoidal $\infty$-category $\CC^{\otimes}$ is a section $s\from \N(\Fin)\to \CC^{\otimes}$ to the functor $p$.  Let $X=s(\class{1})$;  then the unique active map $\alpha\from \class{2}\to\class{1}$ induces a composition law $m\from X\otimes X\to X$.  Once again, the rest of the structure of $s$ furnishes isomorphisms encoding the fact that $m$ is unital, commutative, and associative.  The commutative algebra objects of $\CC^{\otimes}$ form an $\infty$-category $\CAlg(\CC)$ \cite[Definition 2.1.3.1]{HA}. 

If $\CC$ is any $\infty$-category admitting finite products (resp., coproducts), then $\CC$ admits a  canonical symmetric monoidal structure $\CC^\times$ (resp., $\CC^{\amalg}$) for which the composition law is the product (resp.,  the coproduct), see \cite[Construction 2.4.1.4]{HA} (resp. \cite[Construction 2.4.3.1]{HA}).  For instance, the objects of $\CC^{\amalg}_{\class{n}}$ are $n$-tuples $(X_1,\dots,X_n)$ of objects of $\CC$, and a 1-morphism $(X_1,\dots,X_m)\to (Y_1,\dots,Y_n)$ in $\CC^{\amalg}$ consists of a morphism $\alpha\from \class{m}\to\class{n}$ together with a 1-morphism $X_i\to Y_{\alpha(i)}$ for each $1\leq i\leq m$ with $\alpha(i)\neq \ast$.  In particular this forces $X_1\otimes X_2\isom X_1\coprod X_2$. 

If $\CC$ is an $\infty$-category admitting finite products, we write $\CAlg(\CC)=\CAlg(\CC^\times)$.  In particular $\CAlg(\Cat_\infty)$ is the $\infty$-category of symmetric monoidal $\infty$-categories. 

There is a canonical functor $\CC\to \CAlg(\CC^{\amalg})$ which assigns to an object $A$ the commutative algebra structure for which the composition law is the obvious map $A\coprod A\to A$.  In fact $\CC^{\amalg}$ is the universal symmetric monoidal $\infty$-category $\mathcal{D}$ admitting a functor $\CC\to \CAlg(\mathcal{D})$, see \cite[Theorem 2.4.3.18]{HA}.  That is, for any such $\mathcal{D}$, there is an equivalence betwen monoidal functors $T\from \CC^{\coprod} \to \mathcal{D}$ and functors $T^{\otimes} \from \CC\to \CAlg(\mathcal{D})$.

In our desired application, $\CC$ is (the nerve of) some category of geometric objects, and we wish to assign to each object $X$ of $\CC$ a symmetric monoidal category $\infty$-category $\D(X)$ in a coherent manner, such that morphisms $X\to Y$ induce functors $\D(Y)\to \D(X)$.  This may be accomplished by constructing a functor $\CC^{\op}\to\CAlg(\Cat_\infty)$, which is equivalent to constructing a monoidal functor $\CC^{\op,\amalg} \to \Cat_\infty$.
\end{cons}

\begin{cons} Let $\CC$ be a category, let $k\geq 2$ be an integer, let $I\subset \set{1,\dots,k}$ be a subset, and let $\E_1,\dots,\E_k$ be sets of morphisms of $\CC$, each containing every identity morphism in $\CC$.  The {\em restricted multisimplicial nerve} $\delta_{k,I}^\ast\N(\CC)^{\cart}_{\E_1,\dots,\E_k}$ is a simplicial set constructed in \cite{LZ}.  For the moment we will only need it in the case $k=2$, $I=\set{2}$.  The 0-simplices of $\delta_{2,\set{2}}^\ast \N(\CC)_{\E_1,\E_2}$ are simply the objects of $\CC$.  The 1-simplices are cartesian squares
\begin{equation}
\label{EqDelta1Simplex}
\xymatrix{
c_{01} \ar[d]\ar[r] &  c_{00}  \ar[d] \\
c_{11} \ar[r] & c_{10}
}
\end{equation}
where the vertical (resp., horizontal) arrows lie in $\E_1$ (resp., $\E_2$); the vertices of this edge are $c_{00}$ and $c_{11}$.\footnote{The subscript $\set{2}$ in $\delta_{2,\set{2}}^\ast$ controls the shape of the diagram in \eqref{EqDelta1Simplex}, which resembles the diagram in \eqref{EqCartesianSquare}.  If instead $I=\emptyset$ the columns of \eqref{EqDelta1Simplex} would be transposed.}   The 2-simplices of $\delta_{2,\set{2}}^\ast \N(\CC)_{\E_1,\E_2}$ are diagrams
\begin{equation}
\label{EqDelta2Simplex}
\xymatrix{
c_{02} \ar[d] \ar[r] & c_{01} \ar[d] \ar[r] & c_{00} \ar[d] \\
c_{12} \ar[d] \ar[r] & c_{11} \ar[d] \ar[r] & c_{10} \ar[d] \\
c_{22} \ar[r] & c_{21} \ar[r] & c_{20}
}
\end{equation}
where each square is cartesian, and where once again the 
vertical (resp., horizontal) arrows lie in $\E_1$ (resp., $\E_2$).  The three edges of this 2-simplex are given by the upper-right square, the lower-left square, and the outer square in \eqref{EqDelta2Simplex}, respectively.  Given a 1-simplex corresponding to the diagram in \eqref{EqDelta1Simplex}, its degenerate 2-simplices are obtained by placing it in the upper-right or lower-left of a diagram like in \eqref{EqDelta2Simplex}, and ``filling in'' the rest of the diagram using identity morphisms.  Higher simplices, face maps and degeneracy maps are definted similarly.
\end{cons}

We can now describe the machinery of enhanced operation maps built in \cite{LZ2}.  Suppose given a marked $\infty$-category $(\mathcal{C},\mathcal{F})$ as in the beginning of \ref{SectionEOMaps}. We then have
an $\infty$-category $\mathcal{C}_{\amalg}=(\mathcal{C}^{\mathrm{op},\amalg})^{\mathrm{op}}$
equipped with a map of simplicial sets $\mathcal{C}_{\amalg}\to\mathrm{N}(\mathcal{F}\mathrm{in}_{\ast})^{\mathrm{op}}$
\cite[p. 297]{HA}. An object of $\mathcal{C}_{\amalg}$ is a pair $\left\langle n\right\rangle \in\mathcal{F}\mathrm{in}_{\ast}$
together with a sequence $(X_{1},\dots,X_{n})$ of objects in $\mathcal{C}$.
A morphism $f$ in $\mathcal{C}_{\amalg}$ from $(X_{1},\dots,X_{n})$
to $(Y_{1},\dots,Y_{m})$ consists of a map of pointed sets $\alpha:\left\langle m\right\rangle \to\left\langle n\right\rangle $   
together with a sequence of morphisms $\left\{ X_{\alpha(i)}\to Y_{i}\right\} _{i\in\alpha^{-1}\left\langle n\right\rangle ^{\circ}}$
in $\mathcal{C}$. 
Note that $\mathcal{F}$ induces a marking on $\mathcal{C}_{\amalg}$ by
taking those morphisms for which $\alpha:\left\langle m\right\rangle \to\left\langle m\right\rangle $
is the identity map and all the associated morphisms $\{X_{i}\to Y_{i}\}_{1\leq i\leq m}$
lie in $\mathcal{F}$. In the terminology of \cite{LZ}, these are the
edges of $\mathcal{C}_{\amalg}$ which \emph{statically belong to
$\mathcal{F}$}. By abuse of notation, we will also denote this marking
by $\mathcal{F}$. Note that the fiber of $\mathcal{C}_{\amalg}$
over $\left\langle 1\right\rangle $ is just $\mathcal{C}$, on which
the marking of $\mathcal{C}_{\amalg}$ restricts to the original marking
$\mathcal{F}$, so this abuse should cause no confusion.

Now, as in \cite{LZ2} we can form the simplicial set $\delta_{2,\{2\}}^{\ast}(\mathcal{C}_{\amalg})_{\mathcal{F},\mathrm{all}}^{\mathrm{cart}}$.
A 0-simplex is just an object $(X_{1},\dots,X_{n})$ of $\mathcal{C}_{\amalg}$.
\emph{If} $\mathcal{C}$ admits finite products, then a 1-simplex
consists of a map of pointed sets $\alpha:\left\langle m\right\rangle \to\left\langle n\right\rangle $
together with a diagram

\[
\xymatrix{(X_{1}',\dots,X_{n}')\ar[r]\ar[d] & (Y_{1}',\dots,Y_{m}')\ar[d]\\
(X_{1},\dots,X_{n})\ar[r] & (Y_{1},\dots,Y_{m})
}
\]
where the vertical edges statically belong to $\mathcal{F}$, the
horizontal edges are morphisms in $\mathcal{C}_{\amalg}$ lying over
$\alpha$, and for all $j\in\left\langle n\right\rangle ^{\circ}$
the induced diagrams
\[
\xymatrix{X_{j}'\ar[r]\ar[d] & \prod_{i\in\alpha^{-1}(j)}Y_{i}'\ar[d]\\
X_{j}\ar[r] & \prod_{i\in\alpha^{-1}(j)}Y_{i}
}
\]
are cartesian.

Given this setup, an enhanced operation map is a functor
\[
_{\mathcal{C}}\mathrm{EO}:\delta_{2,\{2\}}^{\ast}(\mathcal{C}_{\amalg})_{\mathcal{F},\mathrm{all}}^{\mathrm{cart}}\to\mathcal{C}\mathrm{at}_{\infty}
\]
satisfying various properties. To state these properties, we introduce
some notation attached to such a functor.
\begin{itemize}
\item Any object $X\in\mathcal{C}$ defines a 0-simplex of $\delta_{2,\{2\}}^{\ast}(\mathcal{C}_{\amalg})_{\mathcal{F},\mathrm{all}}^{\mathrm{cart}}$
lying over $\left\langle 1\right\rangle \in\mathcal{F}\mathrm{in}_{\ast}$,
and we set $\mathcal{D}(X):=\mathrm{_{\mathcal{C}}EO}(X)$.
\item Restriction to the ``all'' direction defines a functor 
\[
_{\mathcal{C}}\mathrm{EO}^{\mathrm{I}}:\mathcal{C}^{\mathrm{op},\amalg}\to\mathcal{C}\mathrm{at}_{\infty},
\]
and further restriction to the fiber over $\left\langle 1\right\rangle $
defines a functor 
\[
_{\mathcal{C}}\mathrm{EO}^{\ast}:\mathcal{C}^{\mathrm{op}}\to\mathcal{C}\mathrm{at}_{\infty}.
\]
Given any morphism $f:X\to Y$ in $\mathcal{C}$, we write $f^{\ast}:\mathcal{D}(Y)\to\mathcal{D}(X)$
for the functor given by the image of $f$ under $_{\mathcal{C}}\mathrm{EO}^{\ast}$.
\item Restriction of $_{\mathcal{C}}\mathrm{EO}$ to the fiber over $\left\langle 1\right\rangle $
defines a functor
\[
_{\mathcal{C}}\mathrm{EO}_{!}^{\ast}:\delta_{2,\{2\}}^{\ast}(\mathcal{C}{}_{\mathcal{F},\mathrm{all}}^{\mathrm{cart}})\to\mathcal{C}\mathrm{at}_{\infty},
\]
and further restriction to the $\mathcal{F}$ direction defines a
functor
\[
_{\mathcal{C}}\mathrm{EO}_{!}:\mathcal{C}_{\mathcal{F}}\to\mathcal{C}\mathrm{at}_{\infty}.
\]
Given any morphism $p:X\to Y$ in $\mathcal{C}$ with $p\in\mathcal{F}$,
we write $p_{!}:\mathcal{D}(X)\to\mathcal{D}(Y)$ for the functor
given by the image of $p$ under $_{\mathcal{C}}\mathrm{EO}_{!}$.
\end{itemize}
\begin{defn}
\label{DefnEnhancedOperation}
Notation as above, the functor 
\[
_{\mathcal{C}}\mathrm{EO}:\delta_{2,\{2\}}^{\ast}(\mathcal{C}_{\amalg})_{\mathcal{F},\mathrm{all}}^{\mathrm{cart}}\to\mathcal{C}\mathrm{at}_{\infty}
\]
 is an \emph{enhanced operation map }if the following two conditions
are satisfied.

\begin{enumerate}
\item The functor $_{\mathcal{C}}\mathrm{EO}^{\mathrm{I}}$ is a weak
Cartesian structure \cite[Definition 2.4.1.1]{HA}, and the induced functor (see Construction \ref{ConsSymmetricMonoidal})\footnote{See \cite[Remark 1.5.6]{LZ} for this notation.}
\[
\left(_{\mathcal{C}}\mathrm{EO}^{\mathrm{I}}\right)^{\otimes}:\mathcal{C}^{\mathrm{op}}\to\mathrm{CAlg}(\mathcal{C}\mathrm{at}_{\infty})
\]
factors through $\mathrm{CAlg}(\mathcal{C}\mathrm{at}_{\infty})_{\mathrm{pr,st,cl}}^{\mathrm{L}}$
and sends small products in $\mathcal{C}^{\mathrm{op}}$ (i.e. small
coproducts in $\mathcal{C}$) to products.  Here $\CAlg(\Cat_\infty)^L_{\mathrm{pr,st,cl}}$ is the category appearing in \cite[Definition 1.5.2]{LZ}.  It is the subcategory of $\CAlg(\Cat_\infty)$ whose objects are symmetric monoidal $\infty$-categories which are:
\begin{itemize}
\item presentable \cite[Definition 5.5.0.1]{HTT},
\item stable \cite[Definition 1.1.1.9]{HA}, and
\item closed \cite[Definition 4.1.1.17]{HA},
\end{itemize}
and for which the morphisms $\CC^\otimes\to \D^\otimes$ are left adjoints.

\item  The functor $_{\mathcal{C}}\mathrm{EO}_{!}^{\ast}\from \delta_{2,\set{2}}^{\ast}(\CC_{\mathcal{F},\mathrm{all}}^{\mathrm{cart}})\to \Cat_\infty$ factors through
$\mathcal{P}\mathrm{r}_{\mathrm{st}}^{\mathrm{L}}$.  Here, $\PPr_{\st}^{\LL}\subset \Cat_\infty$ is the subcategory whose objects are presentable stable $\infty$-categories, and whose morphisms are left adjoint functors.

\end{enumerate}
\end{defn}

An enhanced operation map is an extremely dense piece of structure,
and the full content of this structure is probably opaque at first
glance. However, we claim this notion gives a reasonable solution
to Problem 2.1. This claim is justified as follows:

\begin{itemize}
\item Condition (1) in Definition \ref{DefnEnhancedOperation} has the following consequences.  
For all $X\in\mathcal{C}$ we have a presentable stable closed symmetric monoidal $\infty$-category $\D(X):=(_{\CC}\EO^I)^{\otimes}(X)$.  Let us note here that $_{\CC}\EO(X,X)=\D(X)\times \D(X)$, and that evaluating $_{\CC}\EO$ on the 1-simplex
\[
\xymatrix{
X \ar[r]\ar[d] & (X,X) \ar[d] \\
X \ar[r] & (X,X) 
}
\]
defines a functor $\mathcal{D}(X)\times\mathcal{D}(X)\to\mathcal{D}(X)$, which is none other than the symmetric monoidal structure on $\D(X)$.  
Since the symmetric monoidal structure on $\D(X)$ is closed, the functor
$A\mapsto A\otimes B$ commutes with all colimits and therefore admits a right
adjoint, giving the desired internal hom. This verifies item (1) of Scenario \ref{Scenario}.  The construction of $f^*$ and $p_!$ we have given above verifies items (2) and (3).

\item Condition (2) in Definition \ref{DefnEnhancedOperation} implies that $f^{\ast}$
(for arbitrary morphisms $f$) and $p_{!}$ (for morphisms $p\in\mathcal{F}$)
are morphisms in $\mathcal{P}\mathrm{r}_{\mathrm{st}}^{\mathrm{L}}$,
and therefore admit right adjoints. This verifies item (4) of Scenario \ref{Scenario}.

\item Applying $_{\mathcal{C}}\mathrm{EO}_{!}^{\ast}$ to a 1-simplex
\[
\xymatrix{X'\ar[r]^{g}\ar[d]_{q} & X\ar[d]^{p}\\
Y'\ar[r]_{f} & Y
}
\]
of $\delta_{2,\{2\}}^{\ast}\mathcal{C}{}_{\mathcal{F},\mathrm{all}}^{\mathrm{cart}}$
defines a functor $F\from \mathcal{D}(X)\to\mathcal{D}(Y')$.
Now the 2-simplices
\[
\xymatrix{
X'\ar[r] \ar[d]& X \ar[r] \ar[d] & X\ar[d] \\
Y' \ar[r] \ar[d]& Y \ar[r] \ar[d] & Y\ar[d] \\
Y' \ar[r] & Y \ar[r] &    Y 
}\text{   and   }
\xymatrix{
X'\ar[r] \ar[d]& X' \ar[r] \ar[d] & X\ar[d] \\
X' \ar[r] \ar[d]& X' \ar[r] \ar[d] & X\ar[d] \\
Y' \ar[r] & Y' \ar[r] &    Y 
}
\]
witness isomorphisms $F\cong f^*p_!$ and $F\cong q_!g^*$, respectively.  In particular, we get a proper base change equivalence $f^{\ast}p_{!}\simeq q_{!}g^{\ast}$,
verifying item (5) of Scenario \ref{Scenario}.

\item Suppose $\mathcal{C}$ admits finite products. Then for any morphism $p\from X\to Y$
in $\mathcal{F}$, there is a 1-simplex
\[
\xymatrix{(X)\ar[r]\ar[d]_{p} & (X,Y)\ar[d]^{(p,\mathrm{id})}\\
(Y)\ar[r] & (Y,Y)
}
\]
of $\delta_{2,\{2\}}^{\ast}(\mathcal{C}_{\amalg})_{\mathcal{F},\mathrm{all}}^{\mathrm{cart}}$
lying over the unique active map $\alpha:\left\langle 2\right\rangle \to\left\langle 1\right\rangle $,
and applying $_{\mathcal{C}}\mathrm{EO}$ to this 1-simplex defines
a functor $G\from \mathcal{D}(X)\times\mathcal{D}(Y)\to\mathcal{D}(Y)$.  The 2-simplices
\[
\xymatrix{
X \ar[r]\ar[d] & (X,X) \ar[r] \ar[d] & (X,Y)\ar[d] \\
Y \ar[r]\ar[d]  & (Y,Y) \ar[r]\ar[d] & (X,Y)\ar[d]\\
Y \ar[r] & (Y,Y) \ar[r] & (Y,Y)
}
\text{ and }
\xymatrix{
X \ar[r]\ar[d] & X \ar[r]\ar[d]  & (X,Y) \ar[d] \\
X \ar[r]\ar[d] & X \ar[r] \ar[d] & (X,Y)\ar[d] \\
Y \ar[r] & Y \ar[r] & (Y,Y)
}
\]
witness isomorphisms $G(A,B)\cong p_!A \otimes B$ and $G(A,B)\cong p_!(A\otimes p^*B)$, respectively.  In particular, we get the projection formula, verifying item (6) of Scenario \ref{Scenario}.
\end{itemize}

\subsection{The main theorem}

Using the language and notation of the previous section, we can now
state the main technical theorem proved in this paper.
\begin{thm}\label{mainthmtechnical}
Consider the marked $\infty$-category $(\mathcal{V}\mathrm{stk}^{\mathrm{dct}},\mathcal{F})$
where $\mathcal{V}\mathrm{stk}^{\mathrm{dct}}$ denotes the category
of decent v-stacks, and $\mathcal{F}$ is the class of fine morphisms.
Fix a ring $\Lambda$ killed by some integer prime to $p$. Then there
is an enhanced operation map
\[
_{\mathcal{V}\mathrm{stk}^{\mathrm{dct}}}\mathrm{EO}:\delta_{2,\{2\}}^{\ast}(\mathcal{V}\mathrm{stk}_{\amalg}^{\mathrm{dct}})_{\mathcal{F},\mathrm{all}}^{\mathrm{cart}}\to\mathcal{C}\mathrm{at}_{\infty}
\]
such that:
\begin{enumerate}
\item $\mathcal{D}(X)=\mathcal{D}_{\mathrm{\acute{e}t}}(X,\Lambda)$ with its natural symmetric monoidal structure for all $X\in\mathcal{V}\mathrm{stk}^{\mathrm{dct}}$.

\item $f^{\ast}$ coincides with the pullback functor on $\mathcal{D}_{\mathrm{\acute{e}t}}$
constructed in \cite{ECoD} for all morphisms $f$. 

\item For morphisms $f\in\mathcal{F}$ which are separated and
representable in locally spatial diamonds, $f_{!}$ coincides with
the functor $Rf_{!}$ constructed in \cite{ECoD}.
\end{enumerate}
\end{thm}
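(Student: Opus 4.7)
The plan is to follow the five-stage construction outlined in the introduction, mirroring the strategy of Liu--Zheng for schemes but with substantial modifications for diamonds. Throughout I work over the auxiliary category $\mathcal{C}_{0}$ of small coproducts of qcqs v-sheaves, with its marking $\mathcal{F}_{0}$ consisting of those morphisms which are representable in locally spatial diamonds and compactifiable of locally finite dim.trg. The first stage is the $\ast$-EOM on $(\mathcal{C}_{0},\mathrm{all},\{1\})$, which amounts to the statement that $X\mapsto \mathcal{D}_{\mathrm{\acute{e}t}}(X,\Lambda)$, with its natural symmetric monoidal structure, is a monoidal functor $\mathcal{C}_{0}^{\mathrm{op},\amalg}\to \mathcal{C}\mathrm{at}_{\infty}$ carrying coproducts to products, and this follows from the general machinery for ringed topoi in \cite[\S 2.2]{LZ}.

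The next task is to extend this to an EOM that also encodes the exceptional pushforward $f_{!}$ for $f\in\mathcal{F}_{0}$. Every such $f$ factors as a composition $p\circ j$ where $j$ is a separated local isomorphism (an open immersion locally on the source) and $p$ is proper, via the canonical compactification $\overline{X}^{/Y}$ of \cite[Proposition 18.6]{ECoD}. The difficulty is that $\overline{X}^{/Y}$ is not in general representable in locally spatial diamonds, so I enlarge the target class to that of \emph{prespatial diamonds}, tailored to be stable under canonical compactification. The gluing theorem \cite[Theorem 5.4]{LZ2} then glues the $\ast$-EOM to an EOM on $(\mathcal{C}_{0},\text{proper representable in prespatial diamonds},\text{separated local isomorphisms},\mathrm{all},\{1,2,3\})$, with hypotheses furnished by proper base change, the projection formula, and excision for the relevant classes as proved in \cite{ECoD}. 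Applying the abstract adjoint machinery \cite[Proposition 1.4.4]{LZ2} reverses the direction of the first two marked classes, and a second application of the gluing theorem in reverse composes the lower-star on the proper part with the lower-shriek on the local-isomorphism part to produce $(\,)_{!}$ for all of $\mathcal{F}_{0}$. The resulting EOM on $(\mathcal{C}_{0},\mathcal{F}_{0},\{2\})$ agrees, at the level of underlying triangulated categories, with the $Rf_{!}$ of \cite{ECoD} for separated representable morphisms, verifying condition (3) of the theorem.

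Finally, I extend the EOM to $(\mathcal{V}\mathrm{stk}^{\mathrm{dct}},\mathcal{F})$ by three successive applications of the DESCENT program of \cite[\S 4]{LZ}: first from $\mathcal{C}_{0}$ to quasiseparated locally separated locally spatial diamonds, then to all locally separated locally spatial diamonds, and finally to decent v-stacks with fine morphisms. In each step one verifies (i) that every object of the larger category admits a marked cover from the smaller one --- in the last step this is precisely a clean chart (Remark \ref{decentremarks}(ii)), which exists by hypothesis of decency and whose image under the chart comparison lies in $\mathcal{F}$ by Definition \ref{fine} --- and (ii) that the cover morphisms are of universal descent for the ambient EOM. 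Condition (ii) in the final step reduces to cohomological descent for surjective cohomologically smooth maps of locally spatial diamonds, one of the main results of \cite{ECoD}; conditions (1) and (2) of the theorem follow by unwinding the construction.

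The principal obstacle is the second stage. The canonical compactification is delicate --- it genuinely leaves the world of locally spatial diamonds --- and simultaneously extending Scholze's proper base change, projection formula, and adjunction results to the enlarged class of prespatial diamonds, together with checking that proper morphisms and separated local isomorphisms among them satisfy all the compatibilities demanded by \cite[Theorem 5.4]{LZ2}, is the most delicate part of the argument. Once this is in place, the adjoint step and the reverse gluing are essentially formal, and the three DESCENT steps in stage 5 follow a uniform pattern relying only on standard étale-cohomological inputs for locally spatial diamonds.
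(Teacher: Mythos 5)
Your proposal follows essentially the same five-stage strategy as the paper: the $\ast$-EOM on coproducts of qcqs v-sheaves, the gluing step via prespatial diamonds and canonical compactification using Liu--Zheng's categorical gluing and adjoint-passage machinery, and three iterations of DESCENT up through locally spatial diamonds to decent v-stacks. You correctly identify the second stage as the main bottleneck; the one concrete ingredient you leave implicit, and which is the paper's main technical novelty, is the cohomological amplitude bound $\leq 3\,\mathrm{dim.trg}\,f$ for $Rf_\ast$ along proper morphisms representable in prespatial diamonds (the analogue of Scholze's Theorem 22.5), which is what lets the base change, projection formula, and adjointability hypotheses of the gluing theorem hold on unbounded complexes for the enlarged prespatial class.
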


Theorem \ref{mainthmtechnical} implies Theorem \ref{mainthm}. 
The functors $Rf_!$ required by Theorem \ref{mainthm} are obtained from the functors $f_!$ coming from the enhanced operation functor by passing from $\D_{\et}(X,\Lambda)$ to its homotopy category $D_{\et}(X,\Lambda)$. 
In light of the discussion in the previous section, $Rf_!$ satisfies the projection formula and proper base change.  Since $_{\CC}\EO^*_!$ takes values in $\Pr_{\st}^{\LL}$, the functor $Rf_!$ is a left adjoint.

\section{Enhanced operations for qcqs v-sheaves}

\subsection{Prespatial diamonds}

In the theory of \cite{ECoD}, (locally) spatial diamonds play a central role. There are several justifications for this centrality: they capture most diamonds of practical interest, they have excellent categorical properties, and their \'etale cohomology admits some a priori control in terms of simple ``dimensional'' invariants. However, from the perspective of this paper, they suffer one serious defect: spatial diamonds are not known to be stable under the formation of \emph{canonical compactifications}. In particular, if $f:X \to Y$ is a map which is compactifiable and representable in spatial diamonds, it is not known whether $f$ admits a factorization $X \overset{j}{\to} \overline{X}  \overset{p}{\to} Y$ where $j$ is an open immersion and $p$ is proper and representable in spatial diamonds. This lacuna in our knowledge is a serious obstacle to implementing the categorical gluing arguments from \cite{LZ2}.

In this section, we overcome this difficulty by slightly enlarging the category of (locally) spatial diamonds. More precisely, we introduce a notion of (locally) \emph{prespatial} diamonds. On one hand, we will see that these gadgets enjoy all the same good properties as (locally) spatial diamonds, including the same crucial a priori control on \'etale cohomology. On the other hand, they are stable under passing to canonical compactifications, essentially by design.\footnote{Finding the ``right'' generalization of locally spatial diamonds with all of these properties involved several years of trial and error, and turned out to be the main bottleneck in the completion of this project. In the end, we arrived at the definition presented here through extended meditation on the proof of \cite[Theorem 22.5]{ECoD}. We will revisit that proof in the argument for Proposition \ref{prespatialkeybound} below.}

We now turn to the key definitions.
\begin{defn}
A qcqs diamond $X$ is \emph{prespatial} if there exists a spatial
subdiamond $X_{0}\subset X$ such that $X_{0}(K,\mathcal{O}_{K})=X(K,\mathcal{O}_{K})$
for all perfectoid fields $K$.

A diamond $X$ is \emph{locally prespatial} if there exists a locally
spatial subdiamond $X_{0}\subset X$ such that the inclusion map $X_{0}\to X$
is quasicompact and $X_{0}(K,\mathcal{O}_{K})=X(K,\mathcal{O}_{K})$
for all perfectoid fields $K$.

A pair $(X_0,X)$ satisfying the above conditions will be called a \emph{(locally) prespatial pair}.
\end{defn}

Note that in the prespatial case, the inclusion $X_{0}\to X$ is automatically
qcqs. However, we do not impose any further conditions on the map
$X_{0}\to X$. Note also that $X_{0}$ is far from unique in general:
if $X_{0}'\subset X$ and $X_{0}''\subset X$ both verify the definition
of a (locally) prespatial diamond, then so does $X_{0}'\times_{X}X_{0}''$.
More generally, the following lemma holds.
\begin{lem} \label{pair base change}
Let $(X_0,X)$ be a (locally) prespatial pair, let $(Y_0,Y)$ be a locally prespatial pair,
and let $f \colon X \to Y$ be a morphism of diamonds.
Then $(X_0 \times_Y Y_0,X)$ is also a (locally) prespatial pair.
\end{lem}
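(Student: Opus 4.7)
Set $Z := X_0 \times_Y Y_0$, viewed as a v-sheaf equipped with its natural map to $X$ via the composition $Z \to X_0 \hookrightarrow X$. The plan is to verify directly the conditions required for $(Z, X)$ to be a (locally) prespatial pair: namely that $Z$ is a (locally) spatial diamond, that $Z \to X$ is a monomorphism of v-sheaves, that this inclusion is quasicompact in the locally prespatial case, and that $Z(K, \mathcal{O}_K) = X(K, \mathcal{O}_K)$ for every perfectoid field $K$.

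Three of these four checks will be essentially formal. The agreement of perfectoid-field-valued points follows from the identification
\[
Z(K, \mathcal{O}_K) = X_0(K, \mathcal{O}_K) \times_{Y(K, \mathcal{O}_K)} Y_0(K, \mathcal{O}_K) = X(K, \mathcal{O}_K) \times_{Y(K, \mathcal{O}_K)} Y(K, \mathcal{O}_K) = X(K, \mathcal{O}_K),
\]
using both defining properties on perfectoid field points. The monomorphism property will hold because the first projection $Z \to X_0$ is a base change of the monomorphism $Y_0 \hookrightarrow Y$, and composes with $X_0 \hookrightarrow X$ to give a monomorphism into $X$. Quasicompactness in the locally prespatial case is analogous: $Z \to X_0$ is quasicompact as a base change of the quasicompact map $Y_0 \to Y$, and $X_0 \to X$ is quasicompact by hypothesis, so the composite is quasicompact.

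The substantive step is showing that $Z$ is itself a (locally) spatial diamond. The approach is to invoke the stability of (locally) spatial diamonds under fiber products over a diamond, a standard result from \cite{ECoD}: $X_0$ and $Y_0$ are (locally) spatial diamonds equipped with morphisms to the diamond $Y$, so their fiber product $Z$ is a (locally) spatial diamond. This is the step most in danger of going wrong, since $Y$ need not itself be (locally) spatial -- it is only a qcqs diamond in the prespatial case and merely a diamond in the locally prespatial case. One therefore has to ensure that the cited stability result applies in this generality; given this, all the ingredients for a (locally) prespatial pair are in place.
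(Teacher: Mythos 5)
Your outline correctly isolates the four things to check, and the arguments for the perfectoid-field-point condition, the monomorphism, and the quasicompactness of $Z\to X$ are all fine and match the paper's. The problem is with the step you yourself flag as the dangerous one, and your worry is justified: there is no general stability result in \cite{ECoD} asserting that a fiber product of (locally) spatial diamonds over an arbitrary diamond is again (locally) spatial. The relevant statement, \cite[Proposition 11.24]{ECoD}, requires the base to itself be (locally) spatial, and in the locally prespatial case $Y$ is merely a diamond (not even quasiseparated in general), so the route you propose does not go through. In the prespatial case one could rescue it, since $Y$ is qcqs and hence $\Delta_Y$ is a qc injection, making $X_0\times_Y Y_0 \hookrightarrow X_0\times Y_0$ a qc injection into a spatial diamond; but this workaround is unavailable in the locally prespatial case.

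The fix is already implicit in your own observations: you note that $Z\to X_0$ is a base change of the qc injection $Y_0\hookrightarrow Y$, but you only exploit this for the monomorphism and quasicompactness checks. Exploit it for spatiality as well: since $Z\to X_0$ is a qc injection and $X_0$ is locally spatial, \cite[Proposition 11.20]{ECoD} gives directly that $Z$ is locally spatial; and when $X_0$ is spatial (hence qcqs), the qc injection $Z\to X_0$ forces $Z$ to be qcqs as well, hence spatial. This is precisely the argument in the paper, and it sidesteps the need for any fiber-product stability result over a non-spatial base.
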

\begin{proof}
Since $Y_0 \to Y$ is a qc injection, its base change $X_0 \times_Y Y_0 \to X_0$ is also a qc injection, so $X_0 \times_Y Y_0$ is locally spatial by
\cite[Proposition 11.20]{ECoD}. Moreover, $X_0 \times_Y Y_0 \to X$ is qc since it factors as $X_0 \times_Y Y_0 \to X_0 \to X$ where both maps are qc.
If moreover $X_0$ is spatial, then $X_0 \times_Y Y_0$ is qcqs, hence spatial.
It is clear that for any perfectoid field $K$,
$(X_0 \times_Y Y_0)(K,\mathcal{O}_K)=X_0(K,\mathcal{O}_K)=X(K,\mathcal{O}_K)$. This gives the result.
\end{proof}

\begin{defn}
Let $f \colon X\to Y$ be a map of v-stacks. Say $f$ is
\emph{representable in (locally) prespatial diamonds}
if $f$ is 0-truncated and for all
maps $W\to Y$ with $W$ a (locally) prespatial diamond, $X\times_{Y}W$
is a (locally) prespatial diamond.
\end{defn}

\begin{lem}[Sanity Checks] 

\begin{enumerate}[label=(\roman*)]
\item A diamond $X$ is prespatial iff it is locally prespatial and qcqs. 
\item Any open subdiamond of a locally prespatial diamond is locally prespatial. 
\item Any (locally) spatial diamond is (locally) prespatial. 
\item If $f$ is representable in (locally) spatial diamonds, it is representable
in (locally) prespatial diamonds. 
\item Any fiber product of (locally) prespatial diamonds is a (locally)
prespatial diamond. 
\item Morphisms which are representable in (locally) prespatial diamonds
are stable under composition and base change.
\end{enumerate}
\end{lem}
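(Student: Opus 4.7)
My plan is to address each of the six items in turn; all are formal consequences of the definitions, combined with Lemma \ref{pair base change} and standard closure properties of (locally) spatial diamonds from \cite{ECoD}.

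Items (i), (ii), and (iii) are essentially tautological. For (iii), take $X_0 = X$. For (i), the forward direction is immediate from the definitions; conversely, if $X$ is qcqs and $(X_0, X)$ is a locally prespatial pair, then the qc inclusion $X_0 \to X$ has qcqs target, so $X_0$ is qcqs, and a qcqs locally spatial diamond is spatial. For (ii), given an open subdiamond $U \subset X$, set $U_0 := X_0 \times_X U$; this is an open subdiamond of the locally spatial $X_0$ and hence locally spatial, the inclusion $U_0 \hookrightarrow U$ is a qc injection as a base change of $X_0 \hookrightarrow X$, and agreement of perfectoid-field points follows from the fact that $U$ is an open subdiamond and $X_0, X$ have the same such points.

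For (iv), given $f \colon X \to Y$ representable in (locally) spatial diamonds and $(W_0, W)$ a (locally) prespatial pair with $W \to Y$, I would take $X \times_Y W_0$ as the candidate subdiamond of $X \times_Y W$: it is (locally) spatial by hypothesis on $f$ applied to the (locally) spatial $W_0$, the inclusion is a base change of $W_0 \hookrightarrow W$ and hence a qc injection, and agreement on perfectoid points is routine. For (vi), both composition and base change reduce to rearranging fiber products. If $f \colon X \to Y$ and $g \colon Y \to Z$ are both representable in (locally) prespatial diamonds and $W \to Z$ is (locally) prespatial, decompose $X \times_Z W = X \times_Y (Y \times_Z W)$ and apply the hypothesis on $g$ and then on $f$; for base change of $f$ along $Y' \to Y$, any (locally) prespatial $W \to Y'$ satisfies $(X \times_Y Y') \times_{Y'} W = X \times_Y W$. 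Zero-truncatedness is preserved throughout.

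The most delicate item is (v). Given (locally) prespatial diamonds $X, Y$ over a common base $Z$ with chosen subdiamonds $X_0 \subset X$, $Y_0 \subset Y$, the natural candidate subdiamond of $X \times_Z Y$ is $X_0 \times_Z Y_0$. The qc injection property and the agreement of perfectoid-field-valued points follow from base change and direct computation. The delicate point is verifying that $X_0 \times_Z Y_0$ is (locally) spatial; here I would use Lemma \ref{pair base change} iteratively (after fixing a (locally) spatial subdiamond $Z_0 \subset Z$ when $Z$ is itself (locally) prespatial) to arrange that all relevant structure maps land in $Z_0$, and then invoke stability of (locally) spatial diamonds under fiber products from \cite[Proposition 11.20]{ECoD} and the surrounding results. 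This bookkeeping, ensuring the candidate subdiamond is genuinely locally spatial, is the step I expect to require the most care.
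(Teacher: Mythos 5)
Items (i)--(iv) and (vi) match the paper's argument. In (v), however, your candidate subdiamond $X_0 \times_Z Y_0$ is \emph{not} the one the paper uses, and it is not clear that it is locally spatial. The difficulty is that $X_0 \times_Z Y_0$ is a fiber product of locally spatial diamonds over a base $Z$ which is only locally \emph{pre}spatial, and the structure maps $X_0 \to Z$, $Y_0 \to Z$ are arbitrary morphisms of diamonds. In particular they do not factor through $Z_0$, so you cannot invoke stability of locally spatial diamonds under fiber products directly, and the qc injection $Z_0 \hookrightarrow Z$ is not an isomorphism (it is only required to be bijective on $(K,\mathcal{O}_K)$-points), so one cannot silently replace $Z$ by $Z_0$.

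You do gesture at the fix (``arrange that all relevant structure maps land in $Z_0$''), but the conclusion of carrying that out is that the correct candidate is the \emph{smaller} subdiamond
\[
X_0 \times_Z Y_0 \times_Z Z_0 \;=\; (X_0 \times_Z Z_0) \times_{Z_0} (Y_0 \times_Z Z_0),
\]
not $X_0 \times_Z Y_0$. Here each factor $X_0 \times_Z Z_0$ and $Y_0 \times_Z Z_0$ is locally spatial by Lemma~\ref{pair base change} (applied with one side a locally spatial diamond), and the fiber product is now taken over the genuinely locally spatial $Z_0$, so it is locally spatial by the standard stability results in \cite{ECoD}. The inclusion into $X \times_Z Y$ is still a qc injection (composite of base changes of the three inclusions $X_0 \hookrightarrow X$, $Y_0 \hookrightarrow Y$, $Z_0 \hookrightarrow Z$), and agreement on $(K,\mathcal{O}_K)$-points is immediate. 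With that substitution your proof of (v) goes through; as written, the assertion that $X_0 \times_Z Y_0$ itself is locally spatial is an unjustified step.
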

\begin{proof}~
\begin{enumerate}[label=(\roman*)]
\item Suppose that $(X_0,X)$ is a prespatial pair.
Then $X_0 \to X$ is qc since $X_0$ is qc and $X$ is qs.  So $X$ is also locally
prespatial.  

Conversely, suppose that $(X_0,X)$ is a locally prespatial pair and
$X$ is qcqs.  Then $X_0$ is qcqs since $X$ is qcqs and
$X_0 \hookrightarrow X$ is a qc injection.  So $X_0$ is spatial.

\item If $(X_0,X)$ is a locally prespatial pair and $U$ is an open subdiamond of $X$, then
$U \times_X X_0$ is locally spatial by \cite[Proposition 11.19(ii)]{ECoD} (or Lemma \ref{pair base change}).
Since base change preserves the property of being a qc injection,
$(U \times_X X_0,U)$ is a locally prespatial pair.

\item If $X$ is (locally) spatial, then $(X,X)$ is a (locally) prespatial pair.

\item Let $f \colon X \to Y$ be a morphism that is representable in locally spatial diamonds, let $(Z_0,Z)$
be a locally prespatial pair, and suppose that we are given a map $Z \to Y$.  Then $(Z_0 \times_Y X,Z \times_Y X)$
is a prespatial pair.
Suppose $f$ is representable in spatial diamonds and $(Z_0,Z)$ is a prespatial
pair.  Then $f$, $Z_0$, and $Z$ are qcqs, so $Z_0 \times_Y X$ is qcqs,
hence spatial, and $Z \times_Y X$ is also qcqs.

\item Suppose we have (locally) prespatial pairs $(X_0,X)$, $(Y_0,Y)$, $(Z_0,Z)$, and a fiber product $X \times_Z Y$.
Then $X_0 \times_Z Z_0$ and $Y_0 \times_Z Z_0$ are (locally) spatial by Lemma \ref{pair base change}.
Hence $(X_0 \times_Z Z_0) \times_{Z_0} (Y_0 \times_Z Z_0)=X_0 \times_Z Y_0 \times_Z Z_0$ is (locally) spatial,
and $(X_0 \times_Z Y_0 \times_Z Z_0,X\times_Z Y)$ is a (locally) spatial pair.

\item This is clear from the definition.
\end{enumerate}
\end{proof}

\begin{warn}\label{locallyprespatialnotlocal} If $X$ is a diamond, it is not clear whether the
property of being locally prespatial can be checked locally on an
open cover of $X$. In light of this, the name is perhaps slightly
misleading.
\end{warn}
\begin{prop}\label{prespatialstable}
If $f:X\to Y$ is separated and representable in prespatial diamonds,
then $\overline{f}^{/Y}:\overline{X}^{/Y}\to Y$ is proper and representable
in prespatial diamonds.
\end{prop}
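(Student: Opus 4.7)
The proposition has two conclusions, properness and representability in prespatial diamonds. Properness is essentially automatic: for any separated qcqs morphism of v-sheaves, the canonical compactification $\overline{X}^{/Y} \to Y$ is proper by construction, so nothing about prespatiality enters here. The real content is representability.

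For any prespatial $W$ with a map $W \to Y$, we must show $\overline{X}^{/Y} \times_Y W$ is prespatial. Since canonical compactification commutes with base change, this reduces at once to showing that $\overline{X}^{/Y}$ is prespatial whenever $Y$ is. So assume $(Y_0, Y)$ is a prespatial pair, and set $X_0 := X \times_Y Y_0$, which is a subdiamond of $X$ (pullback of the injection $Y_0 \hookrightarrow Y$) and is prespatial by the hypothesis on $f$. Pick a prespatial pair $(X_{00}, X_0)$. The composition $X_{00} \hookrightarrow X_0 \to Y_0$ is then a separated morphism of spatial diamonds, so \cite[Proposition 18.6]{ECoD} yields that $\overline{X_{00}}^{/Y_0}$ is a spatial diamond, proper over $Y_0$. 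The functoriality of canonical compactification applied to the map of pairs $(X_{00}, Y_0) \hookrightarrow (X, Y)$ produces a natural morphism $\varphi \colon \overline{X_{00}}^{/Y_0} \to \overline{X}^{/Y}$; this is my candidate for the spatial witness of the prespatial structure.

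It remains to verify that $\varphi$ is a qc injection and that $\overline{X_{00}}^{/Y_0}(K, \mathcal{O}_K) = \overline{X}^{/Y}(K, \mathcal{O}_K)$ for every perfectoid field $K$. For injectivity I would factor $\varphi$ through the base change $\overline{X}^{/Y} \times_Y Y_0 = \overline{X_0}^{/Y_0}$, using that the second map there is an injection (base change of $Y_0 \hookrightarrow Y$) and reducing injectivity of $\overline{X_{00}}^{/Y_0} \to \overline{X_0}^{/Y_0}$ to a general compatibility of canonical compactifications with injective maps of subdiamonds. For the matching of $(K, \mathcal{O}_K)$-points, I would invoke the valuative criterion of properness to identify such points of $\overline{X}^{/Y}$ with compatible pairs consisting of a $(K, \mathcal{O}_K)$-point of $Y$ and a $K$-point of the generic fibre of $X$, and then use the prespatial structures on $(Y_0, Y)$ and $(X_{00}, X_0)$ to lift uniquely to points of $\overline{X_{00}}^{/Y_0}$. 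The main obstacle, which is presumably what the authors refer to with their footnote about \emph{extended meditation on \cite[Theorem 22.5]{ECoD}}, is turning these formal compatibilities into robust control on the underlying topology and $(K, \mathcal{O}_K)$-points of the canonical compactification, given that $\overline{X}^{/Y}$ is not a priori well-behaved as a topological space.
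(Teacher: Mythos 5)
Your proposal has a genuine gap that is, somewhat ironically, exactly the issue prespatial diamonds were designed to circumvent. You assert that because $X_{00}\to Y_0$ is a separated morphism of spatial diamonds, \cite[Proposition 18.6]{ECoD} yields that $\overline{X_{00}}^{/Y_0}$ is a spatial diamond. That is not what the cited proposition says, and indeed it is not known whether canonical compactifications of spatial diamonds (even over a spatial base) are spatial --- this is precisely the ``serious defect'' flagged at the start of Section 3.1 and the reason the prespatial notion was introduced at all. So the candidate spatial witness $\overline{X_{00}}^{/Y_0}$ may fail to be spatial, and your argument never gets off the ground. Any approach that tries to produce the spatial witness by compactifying will run into this same wall.

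The paper's actual proof avoids compactifying entirely, by noticing that the definition of a prespatial pair does not require the spatial subdiamond to be proper, closed, or otherwise ``large'' inside the ambient diamond. After the reduction (which you carry out correctly) to the case where $X$ and $Y$ are themselves prespatial, take any prespatial pair $(X_0, X)$. Then $X_0 \subset X \subset \overline{X}^{/Y}$ is already a spatial subdiamond of $\overline{X}^{/Y}$, and since the canonical compactification only alters the $R^+$-slot of the moduli problem, one has $\overline{X}^{/Y}(K,\mathcal{O}_K) = X(K,\mathcal{O}_K) = X_0(K,\mathcal{O}_K)$ directly from the formula $\overline{X}^{/Y}(R,R^+) = X(R,R^\circ)\times_{Y(R,R^\circ)} Y(R,R^+)$. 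Properness of $\overline{X}^{/Y}\to Y$ is \cite[Prop.\ 18.7(vii)]{ECoD}, quasicompactness and quasiseparatedness of $\overline{X}^{/Y}$ are easy, and then $(X_0, \overline{X}^{/Y})$ is a prespatial pair. No valuative criterion, no auxiliary pair $(X_{00},X_0)$, and no claims about the topology of $\overline{X}^{/Y}$ are needed.
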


\begin{proof}
We can assume that $X$ and $Y$ are prespatial diamonds. Then $\overline{X}^{/Y}\to Y$
is proper by \cite[Prop. 18.7.(vii)]{ECoD}. In particular, $\overline{X}^{/Y}$
admits a separated map to a quasiseparated target and thus is quasiseparated.
Quasicompacity of $\overline{X}^{/Y}$ is clear. Finally, taking any
$X_{0}\subset X$ as in the definition of a prespatial diamond, we
have $\overline{X}^{/Y}(K,\mathcal{O}_{K})=X(K,\mathcal{O}_{K})=X_{0}(K,\mathcal{O}_{K})$
by the definition of $\overline{X}^{/Y}$, so $\overline{X}^{/Y}$
is prespatial.
\end{proof}
\begin{prop}\label{prespatialkeybound}
If $f:X\to Y$ is proper and representable in prespatial diamonds,
then $Rf_{\ast}:D_{\mathrm{\acute{e}t}}(X,\Lambda)\to D_{\mathrm{\acute{e}t}}(Y,\Lambda)$
has cohomological amplitude $\leq3\mathrm{dim.trg}f$.
\end{prop}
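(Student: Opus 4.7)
The plan is to closely adapt Scholze's proof of \cite[Theorem 22.5]{ECoD} to the prespatial setting, as the footnote in Section 3.1 hints. The weaker constant $3\,\mathrm{dim.trg}\,f$ in place of $2\,\mathrm{dim.trg}\,f$ reflects the loss incurred in working with a spatial subdiamond rather than with $X$ itself.

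First, I would perform the standard reduction. The statement is étale local on $Y$ and is detected by stalks at geometric points, so proper base change for the proper morphism $f$ (from the formalism of \cite{ECoD}) reduces us to showing that for $X$ proper prespatial over a geometric point $\mathrm{Spa}(C,C^{+})$ with $\mathrm{dim.trg}\leq d:=\mathrm{dim.trg}\,f$ and any $A\in D^{\geq 0}_{\et}(X,\Lambda)$, one has $R\Gamma(X,A)\in D^{\leq 3d}$. Fix a spatial subdiamond $X_{0}\subset X$ realizing the prespatial structure, with $\mathrm{dim.trg}(X_{0})\leq d$; by Lemma \ref{pair base change} and Proposition \ref{prespatialstable}, any auxiliary pieces built from $X_{0}$ by base change along maps with prespatial source, or by canonical compactification over the base, will remain either spatial or prespatial. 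For $X_{0}$ itself (and for the proper spatial diamonds obtained by suitable spatial refinements of $X_{0}$) Scholze's bound \cite[Theorem 22.5]{ECoD} already yields cohomological amplitude $\leq 2d$, so the question is to transfer this bound across the qc injection $X_{0}\hookrightarrow X$ at the cost of at most one additional $\mathrm{dim.trg}$.

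The strategy to carry out this transfer is to mimic Scholze's inductive dimension reduction from the proof of \cite[Theorem 22.5]{ECoD}, running it on the pair $(X_{0},X)$ rather than on a single spatial diamond. At each step of the induction the spatial $X_{0}$-piece controls the rank-one generic behaviour of $X$, via the already-established $2d$ bound, while the remaining ``higher-rank locus'' of $X$ not detected by $X_{0}$ is absorbed by one further dimension of induction --- using Proposition \ref{prespatialstable} to ensure that all the auxiliary diamonds produced along the way stay within the prespatial class. The total cost works out to $2d+d=3d$.

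The main obstacle is precisely this inductive bookkeeping: one must carry out Scholze's delicate cohomological induction in the presence of the inclusion $X_{0}\hookrightarrow X$ and verify that at each step the argument either lands on a spatial piece (where \cite[Theorem 22.5]{ECoD} applies verbatim) or can be absorbed at a controlled dimensional cost. This is the difficulty alluded to in the discussion preceding the definition of prespatiality: the very notion was engineered so that Scholze's argument can be pushed through with exactly one additional dimension lost, which is what yields the advertised bound of $3\,\mathrm{dim.trg}\,f$ rather than $2\,\mathrm{dim.trg}\,f$.
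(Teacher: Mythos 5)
Your proposal correctly identifies the reduction steps (reduce by proper base change to $Y=\mathrm{Spd}(C,C^{+})$ and $\mathrm{dim.trg}\leq d$), correctly guesses that the arithmetic is $2d+d=3d$, and correctly intuits that the spatial subdiamond $X_0$ is what makes the extra control possible. But what you have written is a proof \emph{plan}, not a proof: you explicitly defer the central step (``the main obstacle is precisely this inductive bookkeeping'') and describe the missing argument only as ``mimicking Scholze's inductive dimension reduction.'' That gesture does not match how the argument actually goes, and the missing mechanism is the whole content of the proposition.

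The paper's proof is \emph{not} an induction on a ``higher-rank locus.'' After the standard reduction (to $A$ concentrated in degree $0$ with $j^{*}A=0$, where $j$ is the open immersion of the preimage of the generic locus), one factors $f$ as $h\circ g$, where $g\colon X\to \underline{|X|^{h}}\times Y$ is the canonical map to the constant diamond on the maximal Hausdorff quotient $|X|^{h}$, and $h\colon \underline{|X|^{h}}\times Y\to Y$ is the projection. The factor $g$ is proper and representable in \emph{spatial} diamonds, so Scholze's Theorem 22.5 applies verbatim and gives amplitude $\leq 2d$ for $Rg_{*}$. The factor $h$ reduces, via $R^{i}h_{*}B\cong H^{i}(|X|^{h},B|_{|X|^{h}\times s})$, to bounding the cohomological dimension of the compact Hausdorff space $|X|^{h}$. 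The key observation --- and the reason the prespatial definition is tailored to this proof --- is that the inclusion $X_{0}\hookrightarrow X$ induces a \emph{homeomorphism}
\[
\bigl|X_{0}\times_{\mathrm{Spd}(C,C^{+})}\mathrm{Spd}(C,\mathcal{O}_{C})\bigr|^{h}\;\cong\;|X|^{h},
\]
since a continuous bijection from a compact space to a Hausdorff space is a homeomorphism, and the prespatial condition guarantees a bijection on $(K,\mathcal{O}_{K})$-points. This lets one bound the Krull dimension of the relevant spectral space using the \emph{spatial} diamond $X_{0}$ (where one has $\mathrm{dim.trg}$ control), after which a Scheiderer-type argument shows that the cohomological dimension of $|X|^{h}$ is at most that Krull dimension, giving the remaining $\leq d$. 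Without identifying this factorization and the Hausdorff-quotient homeomorphism, your proposal leaves the actual difficulty untouched.
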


\begin{proof}
We follow the proof of \cite[Theorem 22.5]{ECoD} closely. Arguing as
in that proof, we can assume that $Y=\mathrm{Spd}(C,C^{+})$, $X$
is a separated prespatial diamond proper over $Y$, and $A\in D_{\mathrm{\acute{e}t}}(X,\Lambda)$
is concentrated in degree zero with $j^{\ast}A=0$. Here $j:f^{-1}(U)\to X$
is the natural open immersion, where $U\subset Y$ is the complement
of the unique closed point. 

Let $|X|^{h}$ be the maximal Hausdorff quotient of $|X|$, or equivalently
the maximal Hausdorff quotient of $|X\times_{\mathrm{Spd}(C,C^{+})}\mathrm{Spd}(C,\mathcal{O}_{C})|$.
As in \emph{loc. cit.,} we factor $f$ as $g:X\to\underline{|X|^{h}}\times Y$
followed by $h:\underline{|X|^{h}}\times Y\to Y$. As in \emph{loc.
cit.,} $g$ is proper and representable in spatial diamonds, and thus
$Rg_{\ast}$ has cohomological amplitude $\leq2\mathrm{dim.trg}f$. 

It remains to check that if $B\in D_{\mathrm{\acute{e}t}}(\underline{|X|^{h}}\times Y,\Lambda)$
is concentrated in degree zero and trivial on $\underline{|X|^{h}}\times U$,
then $R^{i}h_{\ast}B=0$ for all $i>\mathrm{dim.trg}f$. As in \emph{loc.
cit., }we can identify 
\[
R^{i}h_{\ast}B\cong H^{i}(\underline{|X|^{h}}\times Y,B)\cong H^{i}(|X|^{h},B|_{|X|^{h}\times{s}})
\]
(where we implicitly appeal to \cite[Proposition 22.7]{ECoD}). Thus
it remains to control the cohomology of abelian sheaves on $|X|^{h}$.
The key observation now is that for any choice of $X_{0}\subset X$
as in the definition of a prespatial diamond, we get an induced homeomorphism
\[|X_{0}\times_{\mathrm{Spd}(C,C^{+})}\mathrm{Spd}(C,\mathcal{O}_{C})|^{h}\cong|X|^{h}.\]
This follows from the general observation that if $i:U\to V$ is any injection of qcqs diamonds
which induces a bijection on $(K,\mathcal{O}_{K})$-points for all
perfectoid fields $K$, then $i$ induces a homeomorphism $|U|^{h}\cong|V|^{h}$.\footnote{To verify this, observe that $|U|^{h}\to|V|^{h}$ is a continuous
bijection by the assumption on $(K,\mathcal{O}_{K})$-points. But
any continuous bijection from a compact space to a Hausdorff space
is a homeomorphism.} Since $X_{0}$ is spatial, the Krull dimension of $|X_{0}\times_{\mathrm{Spd}(C,C^{+})}\mathrm{Spd}(C,\mathcal{O}_{C})|$
is bounded above by 
\begin{align*}
\mathrm{dim.trg}X_{0}\times_{\mathrm{Spd}(C,C^{+})}\mathrm{Spd}(C,\mathcal{O}_{C})/\mathrm{Spd}(C,\mathcal{O}_{C}) & \leq\mathrm{dim.trg}X_{0}/\mathrm{Spd}(C,C^{+})\\
 & \leq\mathrm{dim.trg}f.
\end{align*}
The desired bound on the cohomological dimension of $|X|^{h}$ now follows from the next lemma.
\end{proof}
\begin{lem}
Let $X$ be a separated spatial diamond, and let $d$ be the Krull
dimension of the spectral space $|X|$.  Then the cohomological dimension of the compact Hausdorff space
$|X|^{h}$ is $\leq d$.
\end{lem}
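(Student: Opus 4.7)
My plan is to combine the classical Grothendieck--Scheiderer bound on cohomological dimension of spectral spaces with a Leray spectral sequence argument for the quotient map $\pi\colon |X|\to |X|^{h}$. First, I would apply the classical bound: for any spectral space $T$ of Krull dimension $\leq d$ and any abelian sheaf $\mathcal{G}$ on $T$, $H^{i}(T,\mathcal{G})=0$ for $i>d$. Since closed subspaces of spectral spaces are themselves spectral of at most the same Krull dimension, this applies both to $|X|$ itself and to every fiber of $\pi$.

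Next, I would analyze $\pi$ and its direct images. Since $|X|$ is quasicompact and $|X|^{h}$ is Hausdorff, $\pi$ is proper; since $\pi$ is moreover the universal continuous map from $|X|$ to a Hausdorff space, it is a topological quotient, so the unit $\mathcal{F}\to \pi_{\ast}\pi^{\ast}\mathcal{F}$ is an isomorphism for any abelian sheaf $\mathcal{F}$ on $|X|^{h}$. Proper base change then gives $(R^{q}\pi_{\ast}\pi^{\ast}\mathcal{F})_{y}\cong H^{q}(\pi^{-1}(y),\mathcal{F}_{y})$, and the fiberwise Grothendieck bound yields $R^{q}\pi_{\ast}\pi^{\ast}\mathcal{F}=0$ for $q>d$.

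I would then feed this into the Leray spectral sequence
\[
E_{2}^{p,q}=H^{p}(|X|^{h},R^{q}\pi_{\ast}\pi^{\ast}\mathcal{F})\Rightarrow H^{p+q}(|X|,\pi^{\ast}\mathcal{F}),
\]
whose abutment vanishes for $p+q>d$ by the same Grothendieck bound applied to $|X|$. The goal is to extract $H^{p}(|X|^{h},\mathcal{F})=E_{2}^{p,0}=0$ for $p>d$ by induction on $d$, mirroring the strategy in the proof of \cite[Theorem 22.5]{ECoD} which the preceding proposition tells us to follow. The base case $d=0$ is immediate, since Krull-dimension-zero spectral spaces are profinite and hence equal to their Hausdorffizations.

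The main technical obstacle is the last step: a priori the spectral sequence only implies vanishing of $E_{\infty}^{p,0}$ and not of $E_{2}^{p,0}$, because incoming differentials from terms $E_{r}^{p-r,r-1}$ could cancel nontrivial classes on the $E_{2}$ page. To push through the induction one must exploit the specific geometry of $\pi$ beyond what is encoded abstractly in the spectral sequence. The natural approach, following the template of \cite[Theorem 22.5]{ECoD}, is to filter $|X|$ by closed subspaces of strictly smaller Krull dimension, pass to the Hausdorff quotients of the strata (which remain compact Hausdorff quotients of separated spatial diamonds of smaller Krull dimension), and combine the inductive hypothesis with the long exact sequence associated to a closed--open decomposition to rule out the problematic differentials.
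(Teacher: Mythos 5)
Your proposal correctly identifies the central obstacle but then goes a genuinely different (and incomplete) route from the paper, and the route you sketch has a real gap.

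Your diagnosis of the problem with the Leray spectral sequence is accurate: the bound $R^{q}\pi_{*}\pi^{*}\mathcal{F}=0$ for $q>d$ (from applying Scheiderer to each fiber) is too weak, since it controls only $E_{\infty}^{p,0}$ and does not kill the incoming differentials into $E_{r}^{p,0}$. Your proposed fix --- filter $|X|$ by closed subspaces of smaller Krull dimension, pass to Hausdorff quotients of the strata, and induct --- is different from what the paper does, and it is not clear it can be made to work. The difficulty is that the maximal Hausdorff quotient does not interact well with passage to closed subspaces: for a closed $Z\subset |X|$, the natural map $Z^{h}\to |X|^{h}$ need not be injective (two points of $Z$ can be identified in $|X|^{h}$ via a chain of specializations that leaves $Z$), so you do not obtain a useful closed--open decomposition of $|X|^{h}$ from a stratification of $|X|$. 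Without such a decomposition, the induction you gesture at does not get off the ground.

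The paper sidesteps the spectral-sequence issue entirely by proving the much stronger statement that $R^{q}\pi_{*}\pi^{*}\mathcal{F}=0$ for \emph{all} $q>0$, i.e.\ that $\mathcal{F}\to R\pi_{*}\pi^{*}\mathcal{F}$ is an isomorphism; this forces the Leray spectral sequence to degenerate and gives $H^{p}(|X|^{h},\mathcal{F})\cong H^{p}(|X|,\pi^{*}\mathcal{F})$ on the nose, after which Scheiderer applies to $|X|$ alone. The input making this possible --- and the ingredient missing from your argument --- is the structural fact that in a locally spatial diamond the generizations of any point form a totally ordered chain. Consequently each fiber $\pi^{-1}(y)$ is a spectral space with a \emph{unique} maximal (generic) point $\eta$ in which generizations form chains, and for such a space $T$ the adjunction unit $A\to Rj_{*}A$ is an isomorphism for any constant sheaf $A$, where $j:\eta\hookrightarrow T$. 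This makes every fiber acyclic for the restriction of $\pi^{*}\mathcal{F}$ (which is constant on fibers), which is what actually drives the proof. Your plan applies only the generic dimension bound to the fibers and never invokes this finer order-theoretic structure; that is the missing idea.
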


\begin{proof}

Note that the set of generizations of any point in $|X|$ forms a totally ordered chain, as this is true for any locally spatial diamond. Now, let $S$ be any spectral space in which the generizations of any point form a chain, and let $q:S\to S^{h}$ be the natural map
to the maximal Hausdorff quotient. We claim that in fact $R\Gamma(S^{h},\mathcal{F})\cong R\Gamma(S,q^{\ast}\mathcal{F})$
for any abelian sheaf $\mathcal{F}$ on $S^{h}$. In the case of interest
to us, $S=|X|$ has cohomological dimension $\leq d$ by Scheiderer's
theorem \cite{Scheiderer}, so this implies the desired result.

It's clearly enough to prove that $\mathcal{F}\cong Rq_{\ast}q^{\ast}\mathcal{F}$.
Let $x\in S^{h}$ be any point, and let $\tilde{x}\in S$ be the unique
maximal point in the fiber $q^{-1}(x)$, so $q^{-1}(x)=\overline{\{\tilde{x}\}}$.
Let $\mathcal{P}$ be the cofiltered set of all open neighborhoods
of $x$ in $S^{h}$, and let $\mathcal{N}$ be the cofiltered set
of all quasicompact open neighborhoods of $q^{-1}(x)$ in $S$. By
\cite[Lemma 8.1.5]{Hub96}, each of the collections $\{V\subset S\}_{V\in\mathcal{N}}$
and $\{q^{-1}(U)\subset S\}_{U\in\mathcal{P}}$ is a fundamental system
of neighborhoods of $q^{-1}(x)$ in $S$, and moreover 
\[
q^{-1}(x)=\cap_{V\in\mathcal{N}}V=\cap_{U\in\mathcal{P}}q^{-1}(U).
\]
Then
\begin{align*}
(R^{i}q_{\ast}q^{\ast}\mathcal{F})_{x}\cong & \mathrm{colim}_{U\in\mathcal{P}}H^{i}(q^{-1}(U),q^{\ast}\mathcal{F})\\
\cong & \mathrm{colim}_{V\in\mathcal{N}}H^{i}(V,q^{\ast}\mathcal{F})\\
\cong & H^{i}(q^{-1}(x),q^{\ast}\mathcal{F}).
\end{align*}
Since $q^{\ast}\mathcal{F}$ is constant on the fiber $q^{-1}(x)$,
we're reduced to showing that $R\Gamma(q^{-1}(x),A)\cong A$ for any
constant sheaf of abelian groups $A$ on $q^{-1}(x)$. This is an
easy exercise, using the fact that $q^{-1}(x)$ is a spectral space
with a unique maximal point in which the generalizations of any point
form a chain. (Precisely: If $T$ is a such a spectral space, and
$j:\eta\to T$ is the inclusion of the unique maximal point, then
$A\overset{\sim}{\to}Rj_{\ast}A$ for any constant sheaf of abelian
groups $A$.)
\end{proof}
\begin{cor}\label{dimensioncor}
If $f:X\to Y$ is separated and representable in prespatial diamonds
with $\mathrm{dim.trg}f<\infty$, then $Rf_{\ast}$ has cohomological
amplitude $\leq3\mathrm{dim.trg}f$. If moreover $f$ is compactifiable,
then $Rf_{!}$ has cohomological amplitude $\leq3\mathrm{dim.trg}f$.
\end{cor}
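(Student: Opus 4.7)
The plan is to reduce both claims to Proposition \ref{prespatialkeybound} by invoking the canonical compactification provided by Proposition \ref{prespatialstable}. Since $f$ is separated and representable in prespatial diamonds, that proposition gives a factorization $f = \overline{f}^{/Y} \circ j$ where $j\colon X \hookrightarrow \overline{X}^{/Y}$ is an open immersion and $\overline{f}^{/Y}\colon \overline{X}^{/Y} \to Y$ is proper and representable in prespatial diamonds; the canonical compactification preserves relative transcendence degree, so $\mathrm{dim.trg}(\overline{f}^{/Y}) = \mathrm{dim.trg}(f)$.

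The second claim is the cleaner one. Assuming $f$ is compactifiable, I would write $Rf_! \cong R\overline{f}^{/Y}_* \circ j_!$, using that $R\overline{f}^{/Y}_! = R\overline{f}^{/Y}_*$ by properness and that $Rj_! = j_!$ is $t$-exact for the open immersion $j$. Since $j_!$ preserves cohomological amplitude and $R\overline{f}^{/Y}_*$ has amplitude $\leq 3\mathrm{dim.trg}(f)$ by Proposition \ref{prespatialkeybound}, the bound on $Rf_!$ follows.

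For the first claim, I would mimic the strategy in the proof of Proposition \ref{prespatialkeybound}, substituting the canonical compactification at the step where compactness of a Hausdorff quotient was crucial. After the standard reductions to $Y = \mathrm{Spd}(C,C^+)$ and $A$ concentrated in degree zero with vanishing restriction to the generic fiber, factor the pushforward through $|\overline{X}^{/Y}|^h \times Y$: the Hausdorff quotient $|\overline{X}^{/Y}|^h$ is now compact because $\overline{X}^{/Y}$ is proper over $Y$. Taking a spatial subdiamond $X_0 \subset X$ witnessing prespatiality, $X_0 \subset X \hookrightarrow \overline{X}^{/Y}$ exhibits $X_0$ as a spatial subdiamond of $\overline{X}^{/Y}$ with the same $(K,\mathcal{O}_K)$-points, so the Krull-dimension-to-cohomological-dimension argument of the lemma after Proposition \ref{prespatialkeybound} bounds the cohomological dimension of $|\overline{X}^{/Y}|^h$ by $\mathrm{dim.trg}(X_0/Y) \leq \mathrm{dim.trg}(f)$. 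Combined with the $\leq 2\mathrm{dim.trg}(f)$ amplitude contributed by the proper, representable-in-spatial factor to $|\overline{X}^{/Y}|^h \times Y$, the total bound is $\leq 3\mathrm{dim.trg}(f)$.

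The main obstacle is that for merely separated (non-proper) $f$, the Hausdorff quotient $|X|^h$ itself need not be compact, so Scheiderer's theorem does not apply to it directly as in the proof of Proposition \ref{prespatialkeybound}. This is overcome precisely by the design of prespatial diamonds together with Proposition \ref{prespatialstable}: the inclusion $X \hookrightarrow \overline{X}^{/Y}$ is a bijection on $(K,\mathcal{O}_K)$-points for every perfectoid field $K$, so the compact Hausdorff quotient $|\overline{X}^{/Y}|^h$ inherits from a spatial subdiamond $X_0$ exactly the cohomological dimension bound that would have been desired for $|X|^h$, while restoring the compactness needed for the Scheiderer-type input.
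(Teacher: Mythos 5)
Your handling of the $Rf_!$ case is exactly the paper's argument: factor through the canonical compactification, use exactness of $j_!$, and apply Proposition \ref{prespatialkeybound} to $R\overline{f}^{/Y}_*$.

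For the $Rf_*$ case, however, there is a genuine gap. You propose to mimic the proof of Proposition \ref{prespatialkeybound} by factoring the pushforward through $\underline{|\overline{X}^{/Y}|^h}\times Y$ and claiming the first factor contributes amplitude $\leq 2\,\mathrm{dim.trg}\,f$. But in the proof of Proposition \ref{prespatialkeybound}, that bound came from the map $g\colon X\to \underline{|X|^h}\times Y$ being \emph{proper} and representable in spatial diamonds — a fact that held because $f$ itself was proper. In your situation $X$ is only a dense open in $\overline{X}^{/Y}$, so the composite $X\to\underline{|\overline{X}^{/Y}|^h}\times Y$ is not proper (its fibers are in general strict opens in the fibers of $\overline{X}^{/Y}\to\underline{|\overline{X}^{/Y}|^h}\times Y$), and the $\leq 2\,\mathrm{dim.trg}\,f$ amplitude bound for its pushforward is not available. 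You would have to split off $Rj_*$ as a separate factor in the composition, and at that point you need to know its cohomological amplitude.

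The paper avoids this by a cleaner route: it writes $Rf_*=R\overline{f}^{/Y}_*\circ Rj_*$ and supplies the missing ingredient as a standalone lemma — if $j$ is a quasicompact injection of small v-stacks, then $Rj_*$ has cohomological amplitude zero (reducing via \cite[Prop.~17.6]{ECoD} to \cite[Lemma~21.13]{ECoD}). With that, the bound for $Rf_*$ drops out immediately without reopening the $|X|^h$ analysis. Your argument can be repaired by inserting exactly this lemma; as written, it silently assumes it.
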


\begin{proof}
Factor $f$ as $\overline{f}^{/Y}\circ j$, where $j:X\to\overline{X}^{/Y}$
is the natural quasicompact injection. By Proposition \ref{prespatialstable}, $\overline{f}^{/Y}$
is proper and representable in prespatial diamonds with $\mathrm{dim.trg}\overline{f}^{/Y}=\mathrm{dim.trg}f$,
so $R\overline{f}_{\ast}^{/Y}$ has cohomological amplitude $\leq3\mathrm{dim.trg}f$
by Proposition \ref{prespatialkeybound}. Since $Rf_{!}=R\overline{f}_{\ast}^{/Y}\circ j_{!}$,
this implies the claim for $Rf_{!}$. Similarly, writing $Rf_{\ast}=R\overline{f}_{\ast}^{/Y} \circ Rj_\ast$, the desired bound for $Rf_\ast$ follows from the observation that $Rj_{\ast}$
has cohomological amplitude zero, which is a special case of the next lemma.
\end{proof}
\begin{lem}
If $j:X\to Y$ is a quasicompact injection of small v-stacks, then
$Rj_{\ast}$ has cohomological amplitude zero.
\end{lem}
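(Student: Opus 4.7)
The plan is to verify the vanishing statement stalk-locally on $Y$, and then exploit the simple structure of a qc injection after base-changing to a geometric point. Concretely, to show $R^i j_\ast \mathcal{F} = 0$ for $i > 0$ it suffices to check after passing to stalks at geometric points $\bar y\colon \mathrm{Spa}(C,\mathcal{O}_C) \to Y$ with $C$ an algebraically closed perfectoid field. Since quasicompact injections are preserved by arbitrary base change, and the stalks of higher pushforwards under a qc injection are computed by the étale cohomology of the fiber, the stalk $(R^i j_\ast \mathcal{F})_{\bar y}$ equals $H^i(X_{\bar y,\mathrm{\acute{e}t}}, \mathcal{F}|_{X_{\bar y}})$ where $X_{\bar y} = X \times_Y \mathrm{Spa}(C,\mathcal{O}_C)$. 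Thus we are reduced to the following: for any qc injection $X \hookrightarrow \mathrm{Spa}(C,\mathcal{O}_C)$ and any abelian sheaf $\mathcal{F}$ on $X_{\mathrm{\acute{e}t}}$, one has $H^i(X_{\mathrm{\acute{e}t}}, \mathcal{F}) = 0$ for $i > 0$.

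The key observation to finish is that a nonempty qc v-sheaf injection $X \hookrightarrow \mathrm{Spa}(C,\mathcal{O}_C)$ is necessarily of the form $\mathrm{Spa}(C,C^+)$ for some valuation subring $C^+ \subset \mathcal{O}_C$ with $\mathrm{Frac}(C^+) = C$. Indeed, the topological space of $\mathrm{Spa}(C,\mathcal{O}_C)$ is a totally ordered chain of valuation rings of $C$, and a qc sub-v-sheaf monomorphism is pinned down by (and classified by) its $(K,\mathcal{O}_K)$-valued points for perfectoid extensions $K/C$. For such an $X = \mathrm{Spa}(C,C^+)$ with $C$ algebraically closed, every finite étale cover splits, and passing to affinoid pro-étale covers shows that the étale site is cohomologically trivial, so $R\Gamma(X_{\mathrm{\acute{e}t}}, \mathcal{F})$ sits in degree zero for any abelian $\mathcal{F}$.

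The main obstacle is justifying the structural claim that a qc v-sheaf injection into a ``single geometric point'' $\mathrm{Spa}(C,\mathcal{O}_C)$ must be of the indicated form $\mathrm{Spa}(C,C^+)$; once this is granted, the cohomological vanishing is a standard consequence of the fact that $C$ has no nontrivial finite separable extensions. An alternative which may be more robust is to avoid any such classification and instead v-locally identify $X$ with a closed subspace of a strictly totally disconnected perfectoid space and compute directly, using the simple structure of the étale topos of such spaces, at the cost of carrying out the reduction over a general v-cover rather than point-by-point.
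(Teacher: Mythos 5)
The paper's proof is a two-line citation: first reduce to the case where $Y$ is a spatial diamond (via the first half of \cite[Prop.\ 17.6]{ECoD}, which lets one check cohomological amplitude of a pushforward after pullback to spatial diamonds over $Y$), at which point $X$ is also spatial, and then appeal directly to \cite[Lemma 21.13]{ECoD}. Your proposal instead tries to reprove the underlying content from scratch via a reduction to geometric stalks. The overall strategy is reasonable in spirit, but as written there are two real gaps, and you essentially acknowledge both of them at the end.

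The first and more serious gap is the base change assertion: you write that the stalk of $R^i j_\ast\mathcal{F}$ at $\bar y\colon\mathrm{Spa}(C,\mathcal{O}_C)\to Y$ is $H^i(X_{\bar y,\mathrm{\acute et}},\mathcal{F}|_{X_{\bar y}})$, but this is precisely the nontrivial point and you offer no justification. The map $\bar y$ is generally not quasi-pro-\'etale (it can be a closed immersion onto a point of $Y$), so one cannot invoke smooth or pro-\'etale base change directly; you would need a bespoke argument that $Rj_\ast$ commutes with this pullback for $j$ a qc injection, which is in effect the content of the cited \cite[Lemma 21.13]{ECoD}. The second gap is the classification step: since $|\mathrm{Spa}(C,\mathcal{O}_C)|$ is a single point, the only nonempty quasicompact sub-v-sheaf of $\mathrm{Spa}(C,\mathcal{O}_C)$ is $\mathrm{Spa}(C,\mathcal{O}_C)$ itself (a monomorphism which is surjective on $|\cdot|$ is an isomorphism), so the family $\mathrm{Spa}(C,C^+)$ you describe does not actually occur over this base. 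That the fiber is trivially either empty or a point is a symptom that the reduction to rank-one geometric points is pushing all the work into the unjustified base change. The more robust alternative you sketch at the end---reducing to $Y$ strictly totally disconnected, or at least spatial, and arguing there---is closer to what the paper actually does, and is the version of the argument that can be made to work.
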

\begin{proof}
By the first half of \cite[Prop. 17.6]{ECoD}, we can assume that $Y$
is a spatial diamond, in which case $X$ is also a spatial diamond.
This reduces us to \cite[Lemma 21.13]{ECoD}.
\end{proof}

With these results on the books, we now make another definition.
\begin{defn}
Let $f:X\to Y$ be a morphism of small v-stacks. We say $f$ is \emph{strongly
compactifiable }if it is compactifiable, representable in prespatial
diamonds, and locally of finite dim.trg. We say $f$ is\emph{ weakly
compactifiable }if it is compactifiable, representable in locally
prespatial diamonds, and $X$ admits an open cover $X=\cup X_{i}$
with each $X_{i}\to Y$ strongly compactifiable.
\end{defn}
Recall that by definition, a compactifiable morphism is necessarily 0-truncated and separated.

\begin{lem}\label{swsanity}
\begin{enumerate}[label=(\roman*)]
\item The property of being strongly resp. weakly compactifiable is stable under composition and base change.
\item A morphism $f$ is strongly compactifiable iff it is weakly compactifiable
and quasicompact. 
\item A morphism is proper and weakly compactifiable iff it is proper and strongly compactifiable.
\item Any strongly compactifiable morphism $f$ can be factored as $\overline{f}\circ j$
where $j$ is a quasicompact open immersion and $\overline{f}$ is
proper and strongly compactifiable.
\item If $f:X\to Y$ is strongly compactifiable, then $Rf_{\ast}$ and
$R\overline{f}_{\ast}^{/Y}$ satisfy base change on unbounded complexes
and commute with all colimits. Moreover, $Rf_{!}=R\overline{f}_{\ast}^{/Y}\circ j_{!}$
satisfies composability, base change, and the projection formula,
and commutes with all colimits.
\end{enumerate}
\end{lem}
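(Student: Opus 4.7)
The plan is to treat parts (i)--(iii) as structural consequences of the Sanity Checks lemma for representability in (locally) prespatial diamonds, combined with the known stability of compactifiability from \cite{ECoD} and the elementary behavior of $\mathrm{dim.trg}$; then to deduce (iv) from Proposition \ref{prespatialstable}; and finally to reduce (v) to the combination of Corollary \ref{dimensioncor} with the corresponding results for open immersions and for proper maps representable in locally spatial diamonds from \cite{ECoD}.

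For (i), compactifiability is stable under composition and base change by \cite{ECoD}, representability in (locally) prespatial diamonds is stable by the Sanity Checks lemma, and $\mathrm{dim.trg}$ is locally additive under composition and nonincreasing under base change. Open covers witnessing the weakly compactifiable property transport compatibly along both operations. For (ii), the key observation is that any morphism representable in prespatial diamonds is automatically quasicompact: given a qcqs $W \to Y$, choose a v-cover of $W$ by an affinoid perfectoid (hence prespatial) space and apply representability in prespatial to deduce that $X \times_Y W$ is qc. Conversely, if $f$ is weakly compactifiable and qc, then for prespatial $W \to Y$ the product $X \times_Y W$ is qc (by qc of $f$), qs (by separatedness of $f$), and locally prespatial (by the definition), hence prespatial by Sanity Check (i) for prespatial pairs. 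Part (iii) is immediate from (ii), since proper maps are qc.

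For (iv), the obvious candidate is the canonical compactification $j \colon X \to \overline{X}^{/Y}$, $\overline{f}^{/Y} \colon \overline{X}^{/Y} \to Y$ from \cite[Proposition 18.6]{ECoD}. Since $f$ is separated and representable in prespatial diamonds, Proposition \ref{prespatialstable} implies that $\overline{f}^{/Y}$ is proper and representable in prespatial diamonds; invariance of $\mathrm{dim.trg}$ under canonical compactification then upgrades this to strong compactifiability. The inclusion $j$ is a quasicompact open immersion because, after any base change to a prespatial $W \to Y$, both $X \times_Y W$ and $\overline{X}^{/Y} \times_Y W$ are qcqs, so an injection between them is automatically qc.

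For (v), factor $Rf_! = R\overline{f}^{/Y}_\ast \circ j_!$ via (iv). The functor $j_!$ is left adjoint to $j^\ast$, so it commutes with colimits, satisfies base change tautologically, and satisfies the projection formula by \cite{ECoD}. For $R\overline{f}^{/Y}_\ast$, Corollary \ref{dimensioncor} supplies a uniform cohomological amplitude bound, which simultaneously makes $R\overline{f}^{/Y}_\ast$ well-defined on unbounded complexes, forces it to commute with filtered colimits (hence with all colimits), and promotes the bounded-complex versions of base change and the projection formula from \cite{ECoD} to the unbounded setting via truncation. Composability $R(g \circ f)_! \simeq Rg_! \circ Rf_!$ for two strongly compactifiable maps then follows by inserting the canonical compactifications of both $f$ and $g$ and using base change for $\overline{f}^{/Y}$ to commute $j_!^{(g)}$ past $R\overline{f}^{/Y}_\ast$. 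The main technical point I expect here is verifying that the base change and projection formula for $R\overline{f}^{/Y}_\ast$ actually extend cleanly to unbounded complexes; the amplitude bound in Corollary \ref{dimensioncor} is precisely what is needed to make this a straightforward truncation argument, so I do not anticipate any genuine obstacle.
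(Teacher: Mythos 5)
Your proposal fills in the details that the paper explicitly leaves to the reader (``Parts (i)--(iv) are easy and left to the reader. Part (v) follows from the same arguments used in \cite{ECoD}\dots using the cohomological dimension bounds from Corollary \ref{dimensioncor}.''), and your route---reduce (i)--(iii) to the Sanity Checks lemma plus stability of compactifiability and the behavior of dim.trg, get (iv) from Proposition \ref{prespatialstable} applied to the canonical compactification, and get (v) by factoring through $\overline{X}^{/Y}$ and invoking Corollary \ref{dimensioncor}---is precisely the intended one. Everything checks out; the only cosmetic point worth mentioning is that in (iv) the quasicompactness of $j$ follows more directly from the cancellation property of quasicompact morphisms (both $f$ and $\overline{f}^{/Y}$ are qc and $\overline{f}^{/Y}$ is quasiseparated, so $j$ is qc), rather than via a base-change-to-prespatial argument, though your version also works.
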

\begin{proof}Parts (i)-(iv) are easy and left to the reader. Part (v) follows from the same arguments used in \cite{ECoD} for the case where $f$ is representable in spatial diamonds, using the cohomological dimension bounds from Corollary \ref{dimensioncor}.
\end{proof}
 
Note that in \cite{ECoD}, $Rf_!$ and $Rf^!$ are constructed exactly for morphisms $f$ which are weakly compactifiable \emph{and} representable in locally spatial diamonds. We now have the following basic claim.

\begin{schol}\label{weaklycompactifiableshriek}All constructions and results involving $Rf_!$ and $Rf^!$ established in \cite[\S 22-24]{ECoD} extend to the setting of weakly compactifiable morphisms.
\end{schol}

One simply repeats all arguments in \cite{ECoD} with extremely minor changes; we will not need the full generality of this claim, so we omit the details. However, let us analyze one particular case of this more general $Rf_!$ construction, which will be encoded in the enhanced operation map constructed in the next section. Suppose $f:X\to Y$ is a weakly compactifiable map, and that $X$ and $Y$ are coproducts
of qcqs v-sheaves, say with $X=\coprod_{i \in I} X_{i}$ where all $X_i$ are qcqs. There is a naturally
associated diagram
\[
\xymatrix{X=\coprod_{i \in I} X_{i}\ar[r]^{j}\ar[d]^{f} & \coprod_{i \in I} \overline{X_{i}}^{/Y}\ar[d]^{p}\\
Y & \coprod_{i \in I}Y\ar[l]^{h}
}
\]
where $p$ is proper and strongly compactifiable, $j$ is an open
immersion, and $h$ is a local isomorphism. In this notation, we have
a natural identification $Rf_{!} \cong h_{!}\circ Rp_{\ast}\circ j_{!}$.

\subsection{Enhanced operations for qcqs v-sheaves}

The main goal of this section is the following theorem.
\begin{thm}\label{seed}
Let $\mathcal{V}\mathrm{sh}^{\mathrm{qcqs}}$ denote the category
spanned by small coproducts of qcqs v-sheaves, and let $W$ denote the class of weakly compactifiable maps. Fix a ring $\Lambda$ killed
by some integer prime to $p$. Then there is an enhanced operations
map
\[
_{\mathcal{V}\mathrm{sh}^{\mathrm{qcqs}}}\mathrm{EO}:\delta_{2,\{2\}}^{\ast}(\mathcal{V}\mathrm{sh}^{\mathrm{qcqs}}_{\amalg})_{W,\mathrm{all}}^{\mathrm{cart}}\to\mathcal{C}\mathrm{at}_{\infty}
\]
with the following properties.

\emph{i. }$\mathcal{D}(X)=\mathcal{D}_{\mathrm{\acute{e}t}}(X,\Lambda)$ with its natural symmetric monoidal structure for all $X\in\mathcal{V}\mathrm{sh}^{\mathrm{qcqs}}$.

\emph{ii. }$f^{\ast}$ coincides with the pullback functor on $\mathcal{D}_{\mathrm{\acute{e}t}}$
constructed in \cite{ECoD} for all morphisms $f$. 

\emph{iii. }For morphisms $g\in W$, $g_{!}$ coincides with
the functor $Rg_{!}$ constructed in Scholium \ref{weaklycompactifiableshriek} and the discussion immediately afterwards.
\end{thm}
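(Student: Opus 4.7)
The plan is to implement the four-step Liu--Zheng gluing program of \cite{LZ,LZ2}, adapted to the prespatial setting developed in the previous subsection. First, one constructs the seed EOM on $(\mathcal{V}\mathrm{sh}^{\mathrm{qcqs}},\mathrm{all},\{1\})$, encoding only $f^*$, by applying the general construction of \cite[\S 2.2]{LZ} to the v-site equipped with the constant sheaf $\underline{\Lambda}$; this supplies $\mathcal{D}(X)=\mathcal{D}_{\mathrm{\acute{e}t}}(X,\Lambda)$ with its natural symmetric monoidal structure and functorial pullbacks, securing conclusion (i) of the theorem.

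Next, let $P$ denote the class of proper strongly compactifiable morphisms and $L$ the class of separated local isomorphisms (containing both open immersions and fold maps $\coprod Y\to Y$). Every strongly compactifiable morphism admits a canonical factorization $p\circ j$ with $j\in L$ a qc open immersion and $p\in P$ by Lemma~\ref{swsanity}(iv), and this factorization is stable under base change thanks to Proposition~\ref{prespatialstable}. The gluing theorem \cite[Theorem 5.4]{LZ2} then upgrades the seed to an EOM on $(\mathcal{V}\mathrm{sh}^{\mathrm{qcqs}},P,L,\mathrm{all},\{1,2,3\})$, remembering the three pullback functors together with all the canonical factorizations in a coherent way.

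From here the remaining stages are formal. Apply the adjunction-flipping result \cite[Proposition 1.4.4]{LZ2} to replace $p^*$ by its left adjoint $Rp_*$ (which lies in $\mathcal{P}\mathrm{r}_{\mathrm{st}}^{\mathrm{L}}$ thanks to the cohomological amplitude bound of Corollary~\ref{dimensioncor}) and $j^*$ by $j_!$, producing an EOM on $(\mathcal{V}\mathrm{sh}^{\mathrm{qcqs}},P,L,\mathrm{all},\{3\})$. Then run the gluing theorem in reverse (as in the scheme-theoretic setting described in the introduction) to merge $P$ and $L$ into the single class $W$ of weakly compactifiable morphisms, obtaining the desired EOM on $(\mathcal{V}\mathrm{sh}^{\mathrm{qcqs}},W,\mathrm{all},\{2\})$. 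For a weakly compactifiable $f\colon\coprod_i X_i\to Y$, unwinding the construction shows that the resulting $f_!$ agrees with $h_!\circ Rp_*\circ j_!$ in the notation at the end of the previous subsection, which matches the functor of Scholium~\ref{weaklycompactifiableshriek}; this verifies (iii), while (ii) holds because the pullback direction is never modified after the seed stage.

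The main obstacle is the gluing in the second stage. This is where the prespatial formalism earns its keep: the canonical compactification $\overline{X}^{/Y}$ must behave coherently with respect to every base change square (and every higher simplex) appearing in the target EOM, which would fail in the narrower category of spatial diamonds. One needs an $\infty$-categorical upgrade of \cite[Proposition 18.6]{ECoD} for prespatial diamonds, and all fiber products arising in the simplices of the EOM must be checked to remain prespatial using Lemma~\ref{pair base change} together with the sanity checks. Warning~\ref{locallyprespatialnotlocal} further precludes testing local prespatiality on open covers, so all descent steps in the gluing must be carried out through the coproduct structure of $\mathcal{V}\mathrm{sh}^{\mathrm{qcqs}}_{\amalg}$ rather than via open covers.
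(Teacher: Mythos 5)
Your proposal follows essentially the same route as the paper's proof: restrict the pullback-only EOM along the diagonal onto $\delta_{3,\{1,2,3\}}^{\ast}(\mathcal{V}\mathrm{sh}^{\mathrm{qcqs}}_{\amalg})_{P,I,\mathrm{all}}^{\mathrm{cart}}$ for the classes $P$ (proper strongly compactifiable) and $I$ (separated local isomorphisms), flip adjoints in the first two directions, and then transport the result back to $\delta_{2,\{2\}}^{\ast}(\mathcal{V}\mathrm{sh}^{\mathrm{qcqs}}_{\amalg})_{W,\mathrm{all}}^{\mathrm{cart}}$ via the categorical equivalence of Proposition~\ref{keygluing}. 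So the high-level architecture is right.

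Three points deserve correction or sharpening. (a) $Rp_{\ast}$ is the \emph{right} adjoint to $p^{\ast}$, not the left adjoint; what you meant, and what Corollary~\ref{dimensioncor} actually delivers, is that $Rp_{\ast}$ commutes with all colimits (i.e.\ is a morphism of $\mathcal{P}\mathrm{r}_{\mathrm{st}}^{\mathrm{L}}$), which is the condition required for the adjoint flip. (b) You apply the adjunction-flipping result without verifying its hypotheses; passing to right adjoints in direction $1$ requires adjointability of the relevant squares in directions $(1,2)$ and $(1,3)$, and passing to left adjoints in direction $2$ requires adjointability in directions $(2,1)$ and $(2,3)$. In the paper these are extracted from proper base change and the projection formula (Lemma~\ref{swsanity}(v)) for the $P$ direction, and from the elementary adjunction $j_{!}\dashv j^{\ast}$ and \'etale base change for the $I$ direction. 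Omitting this check is the one genuine gap. (c) The last paragraph overstates the role of prespatiality: no $\infty$-categorical upgrade of \cite[Proposition 18.6]{ECoD} is needed. The only inputs are the $1$-categorical facts that canonical compactifications stay prespatial (Proposition~\ref{prespatialstable}), that proper pushforward has bounded cohomological amplitude (Proposition~\ref{prespatialkeybound}), and that $P$ and $I$ are admissible in the sense of \cite[Remark 3.19]{LZ2}; all the coherence is then supplied by \cite[Corollary 0.4]{LZ2} as packaged in Proposition~\ref{keygluing}. Likewise, the remark about routing descent through the coproduct structure is a red herring at this stage --- descent (in the Liu--Zheng DESCENT sense) does not enter until Section 4, and the proof of Theorem~\ref{seed} never descends along open covers.
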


Our construction of $_{\mathcal{V}\mathrm{sh}^{\mathrm{qcqs}}}\mathrm{EO}$
closely follows the arguments in \cite[pp. 37-39]{LZ}, where an analogous
map is constructed in the setting of quasicompact separated schemes.
The key input is the following ``categorical gluing'' result, which is the analogue of \cite[Corollary 0.4]{LZ2} in our setup.
\begin{prop}\label{keygluing}
Let $\mathcal{V}\mathrm{sh}^{\mathrm{qcqs}}$ denote the category of small coproducts of qcqs
v-sheaves. Let $S\subset W$ be the
classes of strongly compactifiable and weakly compactifiable maps,
respectively. Let $P$ be the class
of proper weakly compactifiable maps, let $I$
be the class of separated local isomorphisms, and let $I_{\mathrm{qc}}\subset I$
be the class of quasicompact separated local isomorphisms. Then $P$
and $I$ are \emph{admissible subsets},
in the terminology of \cite{LZ2}, and every strongly compactifiable
morphism $f$ in $\mathcal{V}\mathrm{sh}^{\mathrm{qcqs}}$ can be factored as $f=p\circ q$ for
some $p\in P$ and $q\in I_{\mathrm{qc}}$.

In this notation, the natural map 
\[
\delta_{2}^{\ast}(\mathcal{V}\mathrm{sh}^{\mathrm{qcqs}})_{P,I}^{\mathrm{cart}}\to\mathcal{V}\mathrm{sh}^{\mathrm{qcqs}}_{W}
\]
is a categorical equivalence. Similarly, the natural map
\[
\delta_{3,\{3\}}^{\ast}(\mathcal{V}\mathrm{sh}^{\mathrm{qcqs}})_{P,I,\mathrm{all}}^{\mathrm{cart}}\to \delta_{2,\{2\}}^{\ast}(\mathcal{V}\mathrm{sh}^{\mathrm{qcqs}})_{W,\mathrm{all}}^{\mathrm{cart}}
\]
is a categorical equivalence. Finally, analogous claims hold with $\mathcal{V}\mathrm{sh}^{\mathrm{qcqs}}$ replaced by $\mathcal{V}\mathrm{sh}^{\mathrm{qcqs}}_{\amalg}$.
\end{prop}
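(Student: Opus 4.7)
The plan is to reduce to the abstract gluing/equivalence criterion of \cite[Corollary 0.4]{LZ2}, which establishes exactly the analogous statement in the scheme setting. That result takes as input two admissible classes of morphisms $P$ and $I$ together with a factorization property for a third class, and outputs equivalences of restricted multisimplicial nerves of precisely the form appearing in Proposition \ref{keygluing}. Thus the argument breaks into three tasks: verify admissibility of $P$ and $I$, establish the factorization of strongly compactifiable morphisms, and then invoke the abstract machinery.

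For admissibility, I would check closure under composition, closure under arbitrary base change within $\mathcal{V}\mathrm{sh}^{\mathrm{qcqs}}$, and containment of identities. For the class $I$ of separated local isomorphisms, all of this is formal. For $P$, properness is stable under composition and base change, while weak compactifiability enjoys the same stability by Lemma \ref{swsanity}(i), so their intersection $P$ inherits these properties. One must also verify the ``squared'' compatibility conditions between $P$ and $I$ that appear in the admissibility definition of \cite{LZ2}; these reduce to the observation that fiber products of (locally) prespatial diamonds remain (locally) prespatial (Lemma \ref{pair base change} and the Sanity Checks), together with standard facts about open immersions and proper maps. The required factorization is exactly Lemma \ref{swsanity}(iv): every strongly compactifiable $f$ decomposes as $f = \overline{f}^{/Y}\circ j$ where $j$ is a quasicompact open immersion, hence in $I_{\mathrm{qc}}$, and $\overline{f}^{/Y}$ is proper and strongly compactifiable, hence in $P$ (using Proposition \ref{prespatialstable}).

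With admissibility and factorization established, \cite[Corollary 0.4]{LZ2} yields both of the categorical equivalences claimed in the proposition, for $\mathcal{V}\mathrm{sh}^{\mathrm{qcqs}}$. The variant for $\mathcal{V}\mathrm{sh}^{\mathrm{qcqs}}_{\amalg}$ follows by transporting everything to the coproduct construction: the passage $\mathcal{C} \mapsto \mathcal{C}_{\amalg}$ is functorial in marked $\infty$-categories, commutes with the formation of the cartesian multisimplicial nerves in question, and the marked class of statically belonging to $W$ in $\mathcal{V}\mathrm{sh}^{\mathrm{qcqs}}_{\amalg}$ is defined componentwise. Hence admissibility and factorization lift automatically, and the same theorem applies.

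The main obstacle I expect is the detailed bookkeeping in the admissibility verification: the axioms in \cite{LZ2} involve certain cartesian squares and 3-simplex diagrams, and one must verify that all relevant fiber products land back in $\mathcal{V}\mathrm{sh}^{\mathrm{qcqs}}$ and respect the marked classes. This is particularly delicate because weak compactifiability is defined via an open cover of the source, so stability under base change requires that such covers pull back coherently. This is precisely the point where the groundwork on prespatial diamonds — Lemma \ref{pair base change}, the Sanity Checks lemma, Proposition \ref{prespatialstable}, and Corollary \ref{dimensioncor} — is used in an essential way to keep all the relevant diagrams inside the category and within the marked classes.
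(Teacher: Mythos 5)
Your proposal follows essentially the same approach as the paper: both identify the factorization $f = \overline{f}^{/Y}\circ j$ from Lemma \ref{swsanity}(iv) as the key input, verify admissibility of $P$ and $I$, and then port the argument of \cite[Corollary 0.4]{LZ2} from the scheme setting. The paper's proof is itself terse — it cites \cite[Remark 3.19]{LZ2} for admissibility (a cleaner criterion than the hands-on closure checks you outline), and then says to "argue exactly as in the proof of Corollary 0.4," displaying the square
\[
\xymatrix{\delta_{3}^{\ast}(\mathcal{V}\mathrm{sh}^{\mathrm{qcqs}})_{P,I_{\mathrm{qc}},I}^{\mathrm{cart}}\ar[d]\ar[r] & \delta_{2}^{\ast}(\mathcal{V}\mathrm{sh}^{\mathrm{qcqs}})_{S,I}^{\mathrm{cart}}\ar[d]\\
\delta_{2}^{\ast}(\mathcal{V}\mathrm{sh}^{\mathrm{qcqs}})_{P,I}^{\mathrm{cart}}\ar[r] & \mathcal{V}\mathrm{sh}^{\mathrm{qcqs}}_{W}
}
\]
that replaces the analogous square in loc.\ cit.

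The one place where your sketch is thinner than it should be is the passage from $S$ (strongly compactifiable) to $W$ (weakly compactifiable). The factorization $f = p\circ q$ with $p\in P$, $q\in I_{\mathrm{qc}}$ is available only for $f\in S$, yet the target of the first equivalence is marked by the strictly larger class $W$. The argument must therefore pass through the intermediate class $S$: a weakly compactifiable map is, by definition, covered by strongly compactifiable ones via a separated local isomorphism in $I$, and the two-stage refinement $W \rightsquigarrow (S, I) \rightsquigarrow (P, I_{\mathrm{qc}}, I)$ is exactly what the $2\times 2$ square above records. You mention only the $S$-factorization, so this refinement step should be made explicit. The rest of your plan — admissibility of $P$ and $I$, the role of Proposition \ref{prespatialstable} and Lemma \ref{swsanity}, and the componentwise argument for the $\mathcal{V}\mathrm{sh}^{\mathrm{qcqs}}_{\amalg}$ variant via statically-marked edges — matches the paper.

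One small caveat: you phrase the reduction as if \cite[Corollary 0.4]{LZ2} itself were an abstract criterion taking admissibility and factorization as hypotheses. It is in fact a scheme-specific statement; what is being reused is its proof, which rests on more abstract gluing machinery earlier in \cite{LZ2}. This does not affect the validity of your argument, but the citation should be to the proof technique (or the underlying abstract theorem), not to the corollary as a black box.
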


\begin{proof}
All claims in the first paragraph follow immediately from the definitions. For the admissibility claims in particular, use the criterion in \cite[Remark 3.19]{LZ2}.
For the first categorical equivalence, argue exactly as in the proof of Corollary 0.4 in \cite{LZ2},
with the square
\[
\xymatrix{\delta_{3}^{\ast}(\mathcal{V}\mathrm{sh}^{\mathrm{qcqs}})_{P,I_{\mathrm{qc}},I}^{\mathrm{cart}}\ar[d]\ar[r] & \delta_{2}^{\ast}(\mathcal{V}\mathrm{sh}^{\mathrm{qcqs}})_{S,I}^{\mathrm{cart}}\ar[d]\\
\delta_{2}^{\ast}(\mathcal{V}\mathrm{sh}^{\mathrm{qcqs}})_{P,I}^{\mathrm{cart}}\ar[r] & \mathcal{V}\mathrm{sh}^{\mathrm{qcqs}}_{W}
}
\]
playing the role of the square in loc. cit. The argument for the second categorical equivalence is completely analogous. For the final claim, note that the passage from $\mathcal{V}\mathrm{sh}^{\mathrm{qcqs}}$ to $\mathcal{V}\mathrm{sh}^{\mathrm{qcqs}}_{\amalg}$ poses no additional complications, since by convention the various markings $?$ on $\mathcal{V}\mathrm{sh}^{\mathrm{qcqs}}$ induce markings on $\mathcal{V}\mathrm{sh}^{\mathrm{qcqs}}_{\amalg}$ by consideration of those edges which statically belong to ?.
\end{proof}

Returning to the problem at hand, consider the composite map
\begin{equation}
\delta_{3,\{1,2,3\}}^{\ast}(\mathcal{V}\mathrm{sh}^{\mathrm{qcqs}}_{\amalg})_{P,I,\mathrm{all}}^{\mathrm{cart}}\to(\mathcal{V}\mathrm{sh}^{\mathrm{qcqs}})^{\mathrm{op},\amalg}\overset{\mathrm{EO}^{\mathrm{I}}}{\longrightarrow}\mathcal{C}\mathrm{at}_{\infty}
\end{equation}
where the first arrow is the evident diagonal map, and $\mathrm{EO}^{\mathrm{I}}$
is the map encoding the association $X\mapsto\mathcal{D}_{\mathrm{\acute{e}t}}(X,\Lambda)$
together with its symmetric monoidal $\ast$-pullback functoriality.\footnote{This map exists by general nonsense, as in \cite[Section 2]{LZ} and the discussion on \cite[p. 133]{ECoD}.}

To construct the desired enhanced operation map, we will encode $!$-pushfowards by arguing as in \cite[pp. 37-39]{LZ}: in the source
of the map ??, we will pass to \emph{right} adjoints in direction
1, and \emph{left} adjoints in direction 2. 

To pass to right
adjoints in direction 1, we apply the dual of \cite[Proposition 1.4.4]{LZ}.
To apply this proposition, we need to check the relevant adjointability
in directions (1,2) and (1,3). The former is a special case of the
latter. For the latter, adjointability follows from proper base change
and the projection formula as in \cite[Lemma 3.2.5]{LZ}, both of which
hold in the present situation by Lemma \ref{swsanity}.(v). Therefore, passing to right adjoints in direction 1, we get a map
\[
\delta_{3,\{2,3\}}^{\ast}(\mathcal{V}\mathrm{sh}^{\mathrm{qcqs}}_{\amalg})_{P,I,\mathrm{all}}^{\mathrm{cart}}\to\mathcal{C}\mathrm{at}_{\infty}.
\]

By another application of \cite[Proposition 1.4.4]{LZ}, we now
pass to left adjoints in direction 2. Here the necessary adjointability
in direction (2,1) follows from the adjunction $f_{!}\vdash f^{\ast}$
for $f\in I$, and the adjointability in direction (2,3) follows from
\'etale base change and the projection formula for $j_{!}$ with $j\in I$,
which is trivial. Passing to left adjoints in direction 2, we get a map
\[
\mathrm{EO}':\delta_{3,\{3\}}^{\ast}(\mathcal{V}\mathrm{sh}^{\mathrm{qcqs}}_{\amalg})_{P,I,\mathrm{all}}^{\mathrm{cart}}\to\mathcal{C}\mathrm{at}_{\infty}.
\]

Finally, by the final claim in Proposition \ref{keygluing}, the map
\[
\delta_{3,\{3\}}^{\ast}(\mathcal{V}\mathrm{sh}^{\mathrm{qcqs}}_{\amalg})_{P,I,\mathrm{all}}^{\mathrm{cart}}\to\delta_{2,\{2\}}^{\ast}(\mathcal{V}\mathrm{sh}^{\mathrm{qcqs}}_{\amalg})_{W,\mathrm{all}}^{\mathrm{cart}}
\]
is a categorical equivalence of simplicial sets, so the induced functor
\[
f:\mathrm{Fun}\left(\delta_{2,\{2\}}^{\ast}(\mathcal{V}\mathrm{sh}^{\mathrm{qcqs}}_{\amalg})_{W,\mathrm{all}}^{\mathrm{cart}},\mathcal{C}\mathrm{at}_{\infty}\right)\to\mathrm{Fun}\left(\delta_{3,\{3\}}^{\ast}(\mathcal{V}\mathrm{sh}^{\mathrm{qcqs}}_{\amalg})_{P,I,\mathrm{all}}^{\mathrm{cart}},\mathcal{C}\mathrm{at}_{\infty}\right)
\]
is a categorical equivalence of $\infty$-categories by \cite[Proposition
1.2.7.3]{HTT}. Therefore, choosing any $x$ in the source of $f$ such
that $f(x)\simeq\mathrm{EO}'$, we obtain the desired map
\[
_{\mathcal{V}\mathrm{sh}^{\mathrm{qcqs}}}\mathrm{EO}:\delta_{2,\{2\}}^{\ast}(\mathcal{V}\mathrm{sh}^{\mathrm{qcqs}}_{\amalg})_{W,\mathrm{all}}^{\mathrm{cart}}\to\mathcal{C}\mathrm{at}_{\infty}.
\]
This completes the proof of Proposition \ref{seed}.

\subsection{Descent and codescent}

In this section we prove some descent and codescent properties satisfied
by the enhanced operation map constructed in the previous section.  These will be crucial inputs for the first iteration of the DESCENT algorithm.

Consider the enhanced operation map
\[
_{\mathcal{V}\mathrm{sh}^{\mathrm{qcqs}}}\mathrm{EO}:\delta_{2,\{2\}}^{\ast}(\mathcal{V}\mathrm{sh}^{\mathrm{qcqs}}_{\amalg})_{W,\mathrm{all}}^{\mathrm{cart}}\to\mathcal{C}\mathrm{at}_{\infty}
\]
constructed in the previous section. By restriction and passing to suitable
adjoints, we obtain from this the following functors (with notation as in Section 2.1):

1. A functor
\[
(_{\mathcal{V}\mathrm{sh}^{\mathrm{qcqs}}}\mathrm{EO^I})^{\otimes}:\left(\mathcal{V}\mathrm{sh}^{\mathrm{qcqs}}\right)^{\mathrm{op}}\to\mathrm{CAlg}(\mathcal{C}\mathrm{at}_{\infty})_{\mathrm{pr,st,cl}}^{\mathrm{L}}
\]
encoding the assignment $X\mapsto\mathcal{D}_{\mathrm{\acute{e}t}}(X,\Lambda)$
and all $\ast$-pullbacks together with their symmetric monoidal structures,
with all higher coherences.

2. A functor
\[
_{\mathcal{V}\mathrm{sh}^{\mathrm{qcqs}}}\mathrm{EO}_{!}:(\mathcal{V}\mathrm{sh}^{\mathrm{qcqs}})_{W}\to\mathcal{P}\mathrm{r}_{\mathrm{st}}^{\mathrm{L}}
\]
encoding the assignment $X\mapsto\mathcal{D}_{\mathrm{\acute{e}t}}(X,\Lambda)$
together with all $!$-pushforwards for weakly compactifiable maps, with all higher coherences.

2'. A functor
\[
_{\mathcal{V}\mathrm{sh}^{\mathrm{qcqs}}}\mathrm{EO}^{!}:(\mathcal{V}\mathrm{sh}^{\mathrm{qcqs}})_{W}^{\mathrm{op}}\to\mathcal{P}\mathrm{r}_{\mathrm{st}}^{\mathrm{R}}
\]
encoding the assignment $X\mapsto\mathcal{D}_{\mathrm{\acute{e}t}}(X,\Lambda)$
together with all $!$-pullbacks for weakly compactifiable maps, with all higher coherences.
\begin{prop}\label{keydescent}
Let $f:X\to Y$ be any surjective map in $\mathcal{V}\mathrm{sh}^{\mathrm{qcqs}}$.
\begin{enumerate}

\item $f$ is of universal $(_{\mathcal{V}\mathrm{sh}^{\mathrm{qcqs}}}\mathrm{EO^I})^{\otimes}$-descent.

\item If $f$ is weakly compactifiable, representable in locally spatial diamonds,
and cohomologically smooth, then $f$ is of universal $_{\mathcal{V}\mathrm{sh}^{\mathrm{qcqs}}}\mathrm{EO}_{!}$-codescent.
\end{enumerate}
\end{prop}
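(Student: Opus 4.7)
The plan is to handle (1) and (2) separately: part (1) will reduce to v-descent for étale sheaves on v-sheaves as established in \cite{ECoD}, and part (2) will then be deduced from (1) by combining cohomological smoothness (which relates $!$-pullback to $*$-pullback up to an invertible twist) with the adjunction $f_!\dashv f^!$.

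For (1), observe first that every surjection in $\mathcal{V}\mathrm{sh}^{\mathrm{qcqs}}$ is a v-cover, and v-covers are stable under arbitrary base change, so universality reduces to plain descent. Forgetting the symmetric monoidal enhancement, $(_{\mathcal{V}\mathrm{sh}^{\mathrm{qcqs}}}\mathrm{EO^I})^{\otimes}$ restricts to the functor sending $X$ to $\mathcal{D}_{\mathrm{\acute{e}t}}(X,\Lambda)$ with $*$-pullbacks, and this satisfies v-descent by the main v-descent theorem for étale sheaves in \cite[Proposition 17.3]{ECoD}. Promoting this to descent in $\mathrm{CAlg}(\mathcal{C}\mathrm{at}_\infty)^L_{\mathrm{pr,st,cl}}$ is formal, since limits there are computed in $\mathcal{C}\mathrm{at}_\infty$ and all of the extra structure (presentability, stability, closed symmetric monoidality, and colimit preservation of $*$-pullbacks) is inherited under totalization.

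For (2), I convert codescent in $\mathcal{P}\mathrm{r}^L_{\mathrm{st}}$ into descent in $\mathcal{P}\mathrm{r}^R_{\mathrm{st}}$ via the equivalence $\mathcal{P}\mathrm{r}^L_{\mathrm{st}}\simeq(\mathcal{P}\mathrm{r}^R_{\mathrm{st}})^{\mathrm{op}}$, which levelwise passes the adjunction $f_!\dashv f^!$ through the \v{C}ech nerve $X_\bullet/Y$. All face maps of this nerve are base changes of $f$, hence inherit from $f$ the property of being weakly compactifiable, representable in locally spatial diamonds, and cohomologically smooth. For any such map $g$, Scholium \ref{weaklycompactifiableshriek} extends the \cite{ECoD} isomorphism $g^!(-)\simeq g^*(-)\otimes g^!\Lambda$ with $g^!\Lambda$ invertible. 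This identifies the cosimplicial $!$-pullback diagram $\mathcal{D}_{\mathrm{\acute{e}t}}(X_\bullet,\Lambda)$ with the cosimplicial $*$-pullback diagram, twisted by a coherent system of invertible objects. The $*$-pullback diagram has totalization $\mathcal{D}_{\mathrm{\acute{e}t}}(Y,\Lambda)$ by (1), and the twist is invertible, so the $!$-pullback diagram does as well, yielding the desired codescent.

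The main obstacle will be the coherence of the twist identifications: the chain-rule cocycle $\omega_{gh}\simeq h^*\omega_g\otimes \omega_h$ must hold as a homotopy-coherent natural isomorphism, not merely pointwise, in order to produce a genuine equivalence of cosimplicial objects in $\mathcal{P}\mathrm{r}^R_{\mathrm{st}}$. Since $\omega_g=g^!\Lambda$ is defined within the enhanced operation formalism itself, which already encodes all base change and composition isomorphisms coherently, this coherence should be automatic once one traces through the construction of $_{\mathcal{V}\mathrm{sh}^{\mathrm{qcqs}}}\mathrm{EO}^!$; the technical heart of the argument consists in making that rigorous.
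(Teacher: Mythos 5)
Your part (1) matches the paper's reasoning, and your overall strategy for part (2) — convert codescent in $\PPr^{\LL}_{\st}$ to $!$-descent, identify the $!$-pullback diagram with the $*$-pullback diagram via a twist by dualizing objects, and conclude from (1) — is exactly the strategy the paper uses (citing \cite[Lemma 1.3.3]{GaiDG} for the $\PPr^{\LL}\simeq(\PPr^{\RR})^{\op}$ passage). But there is a genuine gap in part (2): you never address the codegeneracy maps of the \v{C}ech nerve. You say "all face maps of this nerve are base changes of $f$, hence ... cohomologically smooth," which is correct, but then you claim this "identifies the \emph{cosimplicial} $!$-pullback diagram with the \emph{cosimplicial} $*$-pullback diagram, twisted by a coherent system of invertible objects." The cosimplicial diagram also has codegeneracy maps given by $!$-pullback along the partial diagonals $s_i\colon X_n\to X_{n+1}$. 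These are closed immersions (since $f$ is separated), hence lie in $W$ so the $!$-pullbacks are defined, but they are \emph{not} cohomologically smooth, so the isomorphism $s_i^!\simeq s_i^*\otimes s_i^!\Lambda$ fails and the proposed twist does not commute with the codegeneracies. Concretely, the requisite identity would reduce to $Rs_i^!B\simeq s_i^*B$ for $B=Rf_{n+1}^!\Lambda$, which is false for a closed immersion.

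The paper sidesteps this by first observing that the limit of a cosimplicial $\infty$-category agrees with the limit of its underlying semi-cosimplicial $\infty$-category (the dual of \cite[Lemma 6.5.3.7]{HTT}), and only then constructs the twist $\tau_n\colon A_n\mapsto A_n\otimes Rf_n^!\Lambda$, which need only commute with the coface maps (all cohomologically smooth). You need to insert this semi-cosimplicial reduction. Once you do, your concern about the "chain-rule cocycle" on the face maps is also handled more cleanly by the paper's choice of twist: rather than twisting along each face map $d$, the paper twists at each level by $Rf_n^!\Lambda$ where $f_n\colon X_n\to Y$ is the \emph{augmentation}. Compatibility with a face map $d\colon X_{n+1}\to X_n$ then reduces to the single coherent identification $R(f_n\circ d)^!\simeq Rd^!\circ Rf_n^!$, which is already built into the functor ${}_{\mathcal{V}\mathrm{sh}^{\mathrm{qcqs}}}\mathrm{EO}^!$ — there is no cocycle to verify.
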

Here the terminology follows \cite[Definition 3.1.1]{LZ}.
\begin{proof}
Part 1. amounts to the claim that $\mathcal{D}_{\mathrm{\acute{e}t}}(-,\Lambda)$
is a v-sheaf of closed symmetric monoidal stable $\infty$-categories,
which is clear from its construction.

For 2., arguing as in Lemma 1.3.3 of \cite{GaiDG}, it is equivalent to prove that $f$ is of universal
$_{\mathcal{V}\mathrm{sh}^{\mathrm{qcqs}}}\mathrm{EO}^{!}$-descent.
In other words, if $f_{0}:X=X_{0}\to Y=X_{-1}$ is any map as in 2.,
with Cech nerve $f_{\bullet}:X_{\bullet}\to Y$, we need to prove
that
\[
\mathcal{D}_{\mathrm{\acute{e}t}}(Y,\Lambda)\simeq\lim_{n\in\boldsymbol{\Delta}}\mathcal{D}_{\mathrm{\acute{e}t}}(X_{n},\Lambda)
\]
where the transition maps are given by $!$-pullback. 

Quite generally, when computing the limit of a cosimplicial $\infty$-category,
it is equivalent to compute the limit of the associated semi-cosimplicial
$\infty$-category, by (the dual of) Lemma 6.5.3.7 and the subsequent
remarks in \cite{HTT}. Thus, let $\mathcal{C}^{\bullet}:N(\boldsymbol{\Delta}_{s})\to\mathcal{C}\mathrm{at}_{\infty}$
be the (augmented) semi-cosimplicial $\infty$-category with $\mathcal{C}^{n}=\mathcal{D}_{\mathrm{\acute{e}t}}(X_{n},\Lambda)$
and with the transition maps given by $!$-pullback. We need to prove
that 
\[
\mathcal{D}_{\mathrm{\acute{e}t}}(Y,\Lambda)\simeq\lim_{n\in\boldsymbol{\Delta}_{s}}\mathcal{C}^{n}.
\]
Let $\mathcal{D}^{\bullet}:N(\boldsymbol{\Delta}_{s})\to\mathcal{C}\mathrm{at}_{\infty}$
be the (augmented) semi-cosimplicial $\infty$-category with $\mathcal{D}^{n}=\mathcal{D}_{\mathrm{\acute{e}t}}(X_{n},\Lambda)$
and with the transition maps given by $\ast$-pullback. Then we have
a natural equivalence $\tau:\mathcal{D}^{\bullet}\overset{\sim}{\to}\mathcal{C}^{\bullet}$
sending $A_{n}\in\mathcal{D}_{\mathrm{\acute{e}t}}(X_{n},\Lambda)$
to $A_{n}\otimes Rf_{n}^{!}\Lambda$, and this equivalence is compatible
with the augmentations. Thus
\[
\mathcal{D}_{\mathrm{\acute{e}t}}(Y,\Lambda)\simeq\lim_{n\in\boldsymbol{\Delta}_{s}}\mathcal{D}^{n}\overset{\sim}{\to}\lim_{n\in\boldsymbol{\Delta}_{s}}\mathcal{C}^{n},
\]
where the first isomorphism follows from part 1 and the second isomorphism
is induced by $\tau$. This gives the desired result.
\end{proof}

\section{Running DESCENT}

\subsection{Decent v-stacks and fine morphisms}

Recall that in the introduction, we have defined decent v-stacks and
fine morphisms between them. In this section we study this notion
in detail.

To streamline the discussion, it will be convenient to make the following
definition.
\begin{defn}
A map of small v-stacks $f:X\to Y$ is \emph{representable in locally
separated locally spatial diamonds} if for all locally separated locally
spatial diamonds $T$ with a map $T\to Y$, the fiber product $X\times_{Y}T$
is a locally separated locally spatial diamond.
\end{defn}

Note that it is enough to quantify over all separated spatial
$T$. In this language, condition 1. in Definition \ref{decent} is exactly the condition that the diagonal be representable
in locally separated locally spatial diamonds.

\begin{warn}One must be slightly careful when using this definition. In particular, it is not clear to us whether $f:X\to Y$ being representable in locally separated locally spatial diamonds implies that $f$ is representable in locally spatial diamonds, although see Proposition \ref{lslsbasics}.i below for a partial result. Even if this were true, it is still not clear whether $f:X\to Y$ being representable in locally separated locally spatial diamonds is equivalent to $f$ being locally separated and representable in locally spatial diamonds. It seems plausible that such an equivalence is actually false, since the condition of $f$ being locally separated can be phrased as a quantification over open subsets of $|X|$, and in general $|X|$ can have very few open subsets.
\end{warn}

\begin{prop}\label{lslsbasics}
\emph{i. }Suppose $f:X\to Y$ is representable in locally separated locally
spatial diamonds. Then $f$ is representable in diamonds. If also $f$ is quasiseparated, then $f$ is representable in locally spatial diamonds.

\emph{ii. }The property of being representable in locally separated
locally spatial diamonds is stable under composition and base change.

\emph{iii. }Quasicompact injections are representable in locally separated
locally spatial diamonds.

\emph{iv. }If $f:X\to Y$ and $g:Y\to Z$ are maps of small v-stacks
such that $g\circ f$ is representable in locally separated locally
spatial diamonds, and $g$ is 0-truncated and quasiseparated, then
$f$ is representable in locally separated locally spatial diamonds.
\end{prop}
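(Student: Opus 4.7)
The plan is to prove the four claims in the logical order (iii), (ii), (i), (iv). The fulcrum is \cite[Proposition 11.20]{ECoD}: a qc injection into a locally spatial diamond has locally spatial source. I will combine this with the parallel fact that a qc injection into a locally separated v-sheaf has locally separated source; this second fact follows since the diagonal of the source is the pullback of the monomorphic diagonal of the target. Granting these, (iii) is immediate: given a qc injection $j\colon X\hookrightarrow Y$ and a map $T\to Y$ from a locally separated locally spatial diamond, the base change $X\times_Y T\hookrightarrow T$ is again a qc injection, so $X\times_Y T$ is locally separated and locally spatial.

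For (ii), base change is formal by associativity of fiber products. For composition, given $T\to Z$ from a locally separated locally spatial diamond, I rewrite $X\times_Z T=X\times_Y(Y\times_Z T)$: the hypothesis on $g$ makes $Y\times_Z T$ locally separated locally spatial, and the hypothesis on $f$ applied to the projection $Y\times_Z T\to Y$ gives the conclusion. For (i), any affinoid perfectoid $T\to Y$ is a separated spatial diamond, so $X\times_Y T$ is a locally separated locally spatial diamond by hypothesis; covering any perfectoid $T\to Y$ by such affinoid opens shows $f$ is representable in diamonds. If moreover $f$ is quasiseparated and $T\to Y$ is locally spatial, I cover $T$ by spatial open subdiamonds $T_i$ and take qcqs \'etale surjections $T_i'\to T_i$ from affinoid perfectoids; then $X\times_Y T_i'$ is locally spatial by hypothesis, while $X\times_Y T_i$ is a quasiseparated diamond admitting a qcqs \'etale surjection from a locally spatial diamond, and hence is itself locally spatial by the \'etale-local nature of this property for quasiseparated diamonds.

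The main step is (iv), which uses the usual diagonal trick. I factor
\[
X\times_Y T \;=\; (X\times_Z T)\times_{Y\times_Z Y,\,\Delta_g}Y,
\]
so $X\times_Y T\to X\times_Z T$ is the base change of the diagonal $\Delta_g\colon Y\to Y\times_Z Y$. Since $g$ is $0$-truncated, $\Delta_g$ is a monomorphism, and since $g$ is quasiseparated, $\Delta_g$ is quasicompact, so $X\times_Y T\hookrightarrow X\times_Z T$ is a qc injection. The hypothesis on $g\circ f$ applied to $T\to Z$ makes $X\times_Z T$ locally separated and locally spatial, and then (iii) finishes. The only conceptually delicate point in the whole argument is making sure the \'etale descent in (i) really promotes diamondhood to local spatiality for the pulled-back fiber products; otherwise everything rests cleanly on the qc-injection permanence encoded by \cite[Proposition 11.20]{ECoD}.
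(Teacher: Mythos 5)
Your proof of (iii), (ii), (iv) and the first claim of (i) follows essentially the same route as the paper: (iii) via \cite[Proposition 11.20]{ECoD} together with pullback-stability of locally separated along qc injections; (ii) by rewriting $X\times_Z T = X\times_Y(Y\times_Z T)$; (i) by testing on affinoid perfectoid sources; and (iv) via the diagonal trick, writing $X\times_Y T$ as the base change of $\Delta_g$ along $X\times_Z T\to Y\times_Z Y$ and reducing to (iii). This all matches.

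The one divergence is the second sentence of (i), which the paper explicitly declines to prove, calling it ``a slightly tricky exercise'' and noting it is not used elsewhere. Your sketch for this step has a genuine gap: you propose covering $T$ by spatial opens $T_i$ and choosing ``qcqs \'etale surjections $T_i' \to T_i$ from affinoid perfectoids,'' but a spatial diamond need not admit a qcqs \'etale surjection from an affinoid perfectoid space (e.g.\ $\mathrm{Spd}\,\mathbf{Q}_p$ has no such cover, since \'etale maps from perfectoid spaces have perfectoid target). The natural cover here is pro-\'etale or quasi-pro-\'etale, and the descent step to conclude that the quasiseparated diamond $X\times_Y T_i$ is locally spatial then needs the corresponding permanence result from \cite{ECoD}, stated for quasi-pro-\'etale surjections rather than \'etale ones, and applied with some care. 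Since the paper makes no use of this part of the statement, the gap does not affect the rest of your argument, but as written the sketch would not go through.
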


\begin{proof}
We show the first claim in i. Quite generally, to show that a map $f:X\to Y$ is representable in diamonds, it suffices to show that for any map $T\to Y$ with $T$ affinoid perfectoid, the fiber product $X\times_Y T$ is a diamond. Since affinoid perfectoids are separated spatial diamonds, the hypothesis in i. implies that $X \times_Y T$ is a (locally separated locally spatial) diamond, giving the first claim. The second claim in i. is left as a slightly tricky exercise for the reader (since we make no use of it in this paper).

Part ii. is clear, and iii. follows from \cite[Proposition 11.20]{ECoD}. For iv., let $T$ be any locally separated locally spatial diamond
with a map $T\to Y$. Then we have a cartesian diagram
\[
\xymatrix{X\times_{Y}T\ar[d]\ar[r] & X\times_{Z}T\ar[d]\\
Y\ar[r] & Y\times_{Z}Y
}
\]
of small v-stacks, and we need to see that $X\times_{Y}T$ is a locally
separated locally spatial diamond. Since $g\circ f$ is representable
in locally separated locally spatial diamonds, $X\times_{Z}T$ is
a locally separated locally spatial diamond. Then since $g$ is 0-truncated
and quasiseparated, $Y\to Y\times_{Z}Y$ is a quasicompact injection,
so $X\times_{Y}T\to X\times_{Z}T$ is a quasicompact injection. The
claim now follows from iii.
\end{proof}
\begin{prop}\label{decentartin}
Decent v-stacks are Artin v-stacks in the sense of \cite[Definition
IV.1.1]{FS}. In particular, the diagonal of any decent v-stack is quasiseparated
and representable in locally spatial diamonds.
\end{prop}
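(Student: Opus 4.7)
The plan is to verify the two axioms of an Artin v-stack in the sense of \cite[Definition IV.1.1]{FS}: (a) the diagonal $\Delta_X\colon X\to X\times X$ is representable in locally spatial diamonds, and (b) there exists a surjective cohomologically smooth morphism $U\to X$ from a locally spatial diamond $U$. Axiom (b) is immediate from Remark \ref{decentremarks}.ii, which upgrades the chart provided by condition (2) of Definition \ref{decent} to a clean chart $f\colon U\to X$ with $U$ separated locally spatial.

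For (a), I would take a morphism $T\to X\times X$ with $T$ an arbitrary locally spatial diamond and cover $T$ by separated spatial open subdiamonds $T_i$. Each $T_i$ is in particular locally separated locally spatial, so Definition \ref{decent}(1) implies that $X\times_{X\times X}T_i$ is a locally separated locally spatial diamond, and hence a locally spatial diamond. Since open immersions are stable under base change, these assemble into an open cover of $X\times_{X\times X}T$, so the latter is a locally spatial diamond. This gives (a).

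For the quasiseparatedness of $\Delta_X$ asserted in the ``in particular'' clause, I would fix a clean chart $f\colon U\to X$ and consider the base change of $\Delta_X$ along the cohomologically smooth surjection $f\times f\colon U\times U\to X\times X$; this is the morphism $U\times_X U\to U\times U$, whose source is a locally spatial diamond by representability of $f$ in locally spatial diamonds and whose target is locally spatial since products of locally spatial diamonds are locally spatial. Any morphism $g\colon Y\to Z$ between locally spatial diamonds is automatically quasiseparated, since its relative diagonal $\Delta_g\colon Y\to Y\times_Z Y$ is cartesian over the absolute (quasicompact) diagonal $\Delta_Y\colon Y\to Y\times Y$ along the natural projection $Y\times_Z Y\to Y\times Y$, and base change preserves quasicompactness. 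Since quasicompactness of morphisms is v-local on the target and $f\times f$ is a v-cover, quasiseparatedness of $\Delta_X$ follows by descent.

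The principal subtlety I expect is the final v-descent step for quasiseparatedness of morphisms; this is standard but should be extracted carefully from the general machinery of the v-topology in \cite{ECoD}. The remaining ingredients are direct consequences of Definition \ref{decent} together with elementary properties of locally spatial diamonds (they are quasiseparated, products and open subspaces remain locally spatial, etc.), so the only real work is the reduction from the locally-separated locally-spatial test objects in Definition \ref{decent}(1) to arbitrary locally spatial ones via the cover-and-glue argument, plus the descent for quasiseparatedness.
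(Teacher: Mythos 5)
Your proof of condition (b) of the Artin v-stack axioms matches the paper's. For (a), you take a genuinely different and more self-contained route than the paper, which instead invokes \cite[Remark IV.1.4.ii]{FS}: given the existence of a chart, that remark already yields quasiseparatedness of the diagonal and reduces representability in locally spatial diamonds to representability in mere diamonds, the latter being the easy half of Proposition \ref{lslsbasics}.i. Your direct approach has a genuine gap at its first step: you cover an \emph{arbitrary} locally spatial diamond $T$ by separated spatial open subdiamonds, but this requires $T$ to be locally separated, which is not part of being locally spatial. The paper treats ``locally separated locally spatial'' as a potentially strict subclass (it appears repeatedly as a qualifier, e.g. in Definition \ref{decent}), and the related implication in Proposition \ref{lslsbasics}.i (that representability in locally separated locally spatial diamonds plus quasiseparatedness gives representability in locally spatial diamonds) is explicitly flagged there as a ``slightly tricky exercise'' that the paper does \emph{not} prove and does not use. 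Your argument implicitly assumes exactly what makes that exercise tricky.

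There is a second, more local error: the claim that any morphism between locally spatial diamonds is automatically quasiseparated is false, because locally spatial diamonds are not in general quasiseparated (only spatial ones are required to be qcqs), so $\Delta_Y$ need not be quasicompact. Your specific instance is nevertheless fine, but for a different reason than you give: $U$ is a clean chart, so $U$ is separated and $f\colon U\to X$ is separated and representable in locally spatial diamonds, whence $U\times_X U\to U\to\ast$ is a composite of separated maps and $U\times_X U$ is separated, so its absolute diagonal is a closed immersion and in particular quasicompact. The subsequent v-descent of quasicompactness along $f\times f$ is sound. If you want to avoid \cite{FS} entirely, you should replace the false general claim with this separatedness argument and, more seriously, supply a proof (or a reference) that every locally spatial diamond is locally separated, or else reorganize the argument to only need representability in diamonds as the paper does.
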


\begin{proof}
Suppose $X$ is decent, so $\Delta:X \to X \times X$ is representable in locally separated locally spatial diamonds. First we show that $\Delta$ is quasiseparated and representable
in locally spatial diamonds. Pick any chart $f:U\to X$. Then \cite[Remark
IV.1.4.ii]{FS} implies that $\Delta:X\to X\times X$ is quasiseparated,
and reduces us to checking that $\Delta$ is representable in diamonds.
Since $X$ is decent, this follows from the first part of Proposition \ref{lslsbasics}.i.

Refining any given chart for $X$ as in Remark \ref{decentremarks}.ii, we get a surjective
separated cohomologically smooth map $U'\to X$ from a locally spatial
diamond. Hence $X$ is Artin.
\end{proof}
\begin{prop}
If $X_{2}\to X_{1}\leftarrow X_{3}$ is any diagram of decent v-stacks,
then $X_{2}\times_{X_{1}}X_{3}$ is a decent v-stack.
\end{prop}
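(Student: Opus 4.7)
Set $Y := X_2 \times_{X_1} X_3$. The plan is to verify conditions (1) and (2) of Definition \ref{decent} separately, leveraging decency of each of $X_1, X_2, X_3$.

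For condition (2), the idea is to manufacture a chart of $Y$ by fibering together charts of $X_2$ and $X_3$. Using Remark \ref{decentremarks}.ii, pick clean charts $f_i : U_i \to X_i$ for $i = 2,3$ and set $U := U_2 \times_{X_1} U_3$. Since $X_1$ is decent, Remark \ref{decentremarks}.i shows that $U$ is a locally separated locally spatial diamond. The map $U \to Y$ factors as
\[
U_2 \times_{X_1} U_3 \;\longrightarrow\; U_2 \times_{X_1} X_3 \;\longrightarrow\; X_2 \times_{X_1} X_3 = Y,
\]
and each step is a base change of one of the clean charts $f_i$. Since surjectivity, representability in locally spatial diamonds, separatedness, and cohomological smoothness are all preserved under base change and composition, $U \to Y$ is itself a clean chart.

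For condition (1), let $T$ be a locally separated locally spatial diamond equipped with a map $T \to Y \times Y$. We must show that $Z := T \times_{Y \times Y, \Delta_Y} Y$ is a locally separated locally spatial diamond. Writing $\pi_i : X_i \to X_1$ for the structure maps ($i = 2,3$) and unwinding the 2-categorical fiber product defining $Y$, the datum $T \to Y \times Y$ corresponds to two triples $(u_j, v_j, \alpha_j)$ ($j = 1,2$) with $u_j : T \to X_2$, $v_j : T \to X_3$ and $\alpha_j : \pi_2 u_j \simeq \pi_3 v_j$ in $X_1(T)$. A direct calculation then identifies $Z$ with the Isom fiber product
\[
\mathrm{Isom}_{X_2}(u_1,u_2) \times_{\mathrm{Isom}_{X_1}(\pi_2 u_1,\,\pi_3 v_2)} \mathrm{Isom}_{X_3}(v_1,v_2),
\]
where the two structure maps are $\beta \mapsto \alpha_2 \circ \pi_2 \beta$ and $\gamma \mapsto \pi_3 \gamma \circ \alpha_1$. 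Each of the three Isom sheaves above is by construction a pullback of a diagonal $\Delta_{X_i}$ along a map $T \to X_i \times X_i$, hence is a locally separated locally spatial diamond by condition (1) for the decent $X_i$. Since locally separated locally spatial diamonds are themselves decent, one further application of Remark \ref{decentremarks}.i (now over $\mathrm{Isom}_{X_1}(\pi_2 u_1,\pi_3 v_2)$) shows that $Z$ is a locally separated locally spatial diamond.

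The only delicate point is the explicit identification of $\Delta_Y$ with the Isom fiber product displayed above: this requires honest bookkeeping with the 2-cells $\alpha_j$, since $Y$ is genuinely a v-stack and not in general a v-sheaf. Once that identification is in hand, the rest of the argument is a mechanical reduction to decency of the inputs $X_1, X_2, X_3$.
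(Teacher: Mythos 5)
Your proposal is correct. Condition (2) is handled exactly as in the paper: both arguments produce the chart $U_2 \times_{X_1} U_3 \to X_2 \times_{X_1} X_3$ by fibering together charts for $X_2$ and $X_3$ and invoking Remark~\ref{decentremarks}.i for $X_1$ to control $U_2 \times_{X_1} U_3$.

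For condition (1), however, your argument is genuinely different from the paper's. The paper proceeds abstractly: it introduces the quasicompact injection $u\from X_2\times_{X_1}X_3\to X_2\times X_3$ (the pullback of $\Delta_{X_1}$), places $\Delta_{X_2\times_{X_1}X_3}$ and $\Delta_{X_2\times X_3}$ in a commutative square via $u$ and $u\times u$, observes that $\Delta_{X_2\times X_3}\circ u$ is representable in locally separated locally spatial diamonds, and finally invokes the cancellation statement Proposition~\ref{lslsbasics}.iv (if $g\circ f$ is representable in lsls diamonds and $g$ is $0$-truncated and quasiseparated, so is $f$) to conclude. Your proof instead unwinds the $2$-categorical fiber product directly: given $T\to Y\times Y$, you identify $T\times_{Y\times Y,\Delta_Y}Y$ with a fiber product of three Isom sheaves, each of which is a pullback of a $\Delta_{X_i}$ along a map from a lsls diamond and hence itself lsls by condition (1) for $X_i$; a final application of Remark~\ref{decentremarks}.i over the (decent) lsls base $\mathrm{Isom}_{X_1}(\pi_2u_1,\pi_3v_2)$ closes the argument. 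The trade-off is clear: the paper's route stays at the level of morphism classes and avoids any bookkeeping with $2$-cells, but it depends on having first established the cancellation lemma \ref{lslsbasics}.iv; your route is more explicit and self-contained, paying only with the (honest but routine) verification that $\Delta_Y$ is classified by the stated Isom fiber product. Both are sound, and your Isom computation is correct, including the choice of structure maps $\beta\mapsto\alpha_2\circ\pi_2\beta$ and $\gamma\mapsto\pi_3\gamma\circ\alpha_1$ witnessing the compatibility of the $2$-cells.
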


\begin{proof}
Pick charts $f_{i}:U_{i}\to X_{i}.$ Then $U_{2}\times_{X_{1}}U_{3}$
is a locally separated locally spatial diamond, by Remark \ref{decentremarks}.i applied
to $X_{1}$. One now verifies that $U_{2}\times_{X_{1}}U_{3}\to X_{2}\times_{X_{1}}X_{3}$
is a chart, by factoring it as the composition of maps
\[
U_{2}\times_{X_{1}}U_{3}\overset{g_{2}}{\to}X_{2}\times_{X_{1}}U_{3}\overset{g_{3}}{\to}X_{2}\times_{X_{1}}X_{3}
\]
where $g_{i}$ is a base change of $f_{i}$.

For the condition on the diagonal, let $u:X_{2}\times_{X_{1}}X_{3}\to X_{2}\times X_{3}$
be the pullback of $\Delta_{X_{1}}:X_{1}\to X_{1}\times X_{1}$ along
$X_{2}\times X_{3}\to X_{1}\times X_{1}$, so $u$ is representable in locally separated locally spatial diamonds by the decency of $X_1$ and quasiseparated by Proposition \ref{decentartin}. Now
consider the commutative diagram
\[
\xymatrix{X_{2}\times_{X_{1}}X_{3}\ar[d]^{u}\ar[rr]^{\Delta_{X_{2}\times_{X_{1}}X_{3}}\;\;\;\;\;\;\;\;\;\;} &  & (X_{2}\times_{X_{1}}X_{3})\times(X_{2}\times_{X_{1}}X_{3})\ar[d]^{u\times u}\\
X_{2}\times X_{3}\ar[rr]^{\Delta_{X_{2}\times X_{3}}\;\;\;\;\;\;} &  & (X_{2}\times X_{3})\times(X_{2}\times X_{3})
}
\]
of small v-stacks. Using the decency of $X_{2}$ and $X_{3}$, one
sees by repeated applications of Proposition \ref{lslsbasics}.ii that $\Delta_{X_{2}\times X_{3}}$
and then also $\Delta_{X_{2}\times X_{3}}\circ u$ are both representable
in locally separated locally spatial diamonds. Now going around the
diagram via the upper right, we also get that 
\[
(u\times u)\circ\Delta_{X_{2}\times_{X_{1}}X_{3}}\simeq\Delta_{X_{2}\times X_{3}}\circ u
\]
is representable in locally separated locally spatial diamonds. Since
$u$ is 0-truncated and quasiseparated, it is formal that $u\times u$ is 0-truncated and quasiseparated, by factoring it as a composition of pullbacks of $u$ and using
Proposition \ref{lslsbasics}.ii again.
Therefore, applying Proposition \ref{lslsbasics}.iv with $f=\Delta_{X_{2}\times_{X_{1}}X_{3}}$
and $g=u\times u$, we deduce that $\Delta_{X_{2}\times_{X_{1}}X_{3}}$
is representable in locally separated locally spatial diamonds, as
desired.
\end{proof}

We now turn to the study of fine morphisms. We begin with some remarks on the definition.

\begin{rem}\label{fineremark}
Let $f$ be a fine morphism, with
\[
\xymatrix{W\ar[d]^{b}\ar[r]^{g} & V\ar[d]^{a}\\
X\ar[r]^{f} & Y
}
\]
a commutative diagram witnessing the fineness of $f$. Successively refining $V$ and $W$ as in Remark \ref{decentremarks}.ii, we can extend this to a commutative diagram
\[
\xymatrix{W'\ar[d]^{\beta}\ar[r]^{g'} & V'\ar[d]^{\alpha}\\
W\ar[d]^{b}\ar[r]^{g} & V\ar[d]^{a}\\
X\ar[r]^{f} & Y
}
\]
where $V'$ and $W'$ are separated locally spatial diamonds, $\alpha$ and $\beta$ are surjective separated local isomorphisms, and $a \circ \alpha$ and $b \circ \beta$ are clean charts. Using \cite[Proposition 22.3]{ECoD}, the condition that $g$ is locally on $W$ compactifiable of finite dim.trg is then \emph{equivalent} to the analogous condition for $g'$. However, $g'$ is a map between separated v-sheaves, thus is separated itself, so using \cite[Proposition 22.3]{ECoD} again, one sees that the condition on $g'$ boils down to the condition that $g'$ is compactifiable of locally finite dim.trg. Summarizing this discussion, we conclude that a given morphism $f$ is fine if and only if there is a commutative diagram
\[
\xymatrix{W'\ar[d]^{b'}\ar[r]^{g'} & V'\ar[d]^{a'}\\
X\ar[r]^{f} & Y
}
\]
where $a'$ and $b'$ are clean charts and $g'$ is compactifiable of locally finite dim.trg. 

Similar arguments show that if $f:X \to Y$ is any map of decent v-stacks, then $f$ is $\ell$-cohomologically smooth in the sense of Definition \ref{fine}.ii if and only if it is $\ell$-cohomologically smooth in the sense of \cite[Definition IV.1.11]{FS}.
\end{rem}

\begin{rem}
In the definition of a fine morphism, one might instead ask for an a priori
stronger condition, namely the existence of a commutative diagram
\[
\xymatrix{W'\ar[d]^{b'}\ar[dr]^{g'}\\
U\ar[d]^{\tilde{a}}\ar[r] & V\ar[d]^{a}\\
X\ar[r]^{f} & Y
}
\]
where $a$ is a chart for $Y$, the square is cartesian, $b'$ is
a chart for $U$, and $g'$ is locally on $W'$ compactifiable of
finite dim.trg. The existence of such a diagram certainly implies
that $f$ is fine, since $\tilde{a}\circ b':W'\to X$ is a chart
for $X$. In fact, these two conditions are equivalent.

To see this, pick a commutative square as in the definition of a fine morphism.
Set $U=X\times_{Y}V$, so we get a commutative diagram
\[
\xymatrix{W\ar[r]^{i}\ar[dr]^{b} & U\ar[r]^{h}\ar[d]^{\tilde{a}} & V\ar[d]^{a}\\
 & X\ar[r]^{f} & Y
}
\]
where $a$ and $b$ are charts, the square is cartesian, and $g=h\circ i$.
Then setting $W'=W\times_{b,X,\tilde{a}}U$, this diagram extends
to a diagram
\[
\xymatrix{W'\ar[d]^{\tilde{\tilde{a}}}\ar[dr]^{b'}\\
W\ar[r]^{i}\ar[dr]^{b} & U\ar[r]^{h}\ar[d]^{\tilde{a}} & V\ar[d]^{a}\\
 & X\ar[r]^{f} & Y
}
\]
where the trapezoid is cartesian. (\textbf{Warning!} The triangle
spanned by $\tilde{\tilde{a}},b',i$ is typically not commutative.)
Using the alternative presentation $W'=W\times_{Y}V$, decency of
$Y$ implies by Remark \ref{decentremarks}.i that $W'$ is a locally separated locally
spatial diamond. One then sees (using that $b$ is a chart) that $b'$
is a chart. 

To conclude, it is enough to see that $g':=h\circ b'$ is locally
on $W'$ compactifiable of finite dim.trg. For this, observe that
by our assumptions on $g=h\circ i$ and $a$, $a\circ g=a\circ h\circ i=f\circ b$
is representable in locally spatial diamonds and is locally on $W$
compactifiable of finite dim.trg. Indeed, the claimed properties are
true for $g$ and $a$ separately by assumption; to get it for the
composition, refine the chart $a$ as in Remark \ref{decentremarks}.ii if necessary.
Then $h\circ b'$ is the base change of $f\circ b$ along $a$, which
gives the desired conclusion.
\end{rem}

\begin{prop}
\emph{o. }If $f:X\to Y$ is fine, then for \emph{any }commutative
diagram
\[
\xymatrix{W\ar[d]\ar[r] & V\ar[d]\\
X\ar[r] & Y
}
\]
where the vertical maps are charts, the map $W\to V$ is locally on
$W$ compactifiable of finite dim.trg.

\emph{i. }Fine morphisms are stable under composition.

\emph{ii. }If 
\[
\xymatrix{\tilde{X}\ar[r]^{\tilde{f}}\ar[d] & \tilde{Y}\ar[d]\\
X\ar[r]^{f} & Y
}
\]
is any Cartesian diagram of decent v-stacks, and $f$ is fine, then
also $\tilde{f}$ is fine.

\emph{iii. }If 
\[
\xymatrix{ & Y\ar[dr]^{g}\\
X\ar[rr]^{h}\ar[ur]^{f} &  & Z
}
\]
is any commutative diagram of decent v-stacks, and $h$ and $g$ are
fine, then so is $f$.

\emph{iv. }If $f:X\to Y$ is a map of decent v-stacks which is separated
and representable in locally spatial diamonds, then $f$ is fine
if and only if $f$ is compactifiable of locally finite dim.trg.
\end{prop}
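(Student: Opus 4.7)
The plan is to address the five parts in the order (o), (i), (ii), (iii), (iv), relying on four technical facts about morphisms of locally separated locally spatial diamonds which are compactifiable of locally finite dim.trg: (a) stability under composition and arbitrary base change; (b) two-out-of-three cancellation, namely that in a factorization $W \to V \to U$, if both the composition and $V \to U$ have this property then so does $W \to V$; (c) descent along surjective cohomologically smooth maps on the source; and (d) any chart, being locally separated, locally cohomologically smooth, and representable in locally spatial diamonds, itself has this property locally on the source. Each of (a)--(d) is either stated in or a minor variant of results in \cite[\S 22--24]{ECoD}.

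Part (o) sets up the main calculus. Given a chart diagram $W_0 \to V_0$ realizing fineness of $f$, and any other chart diagram $W \to V$, form $\widetilde{W} := W \times_X W_0$ and $\widetilde{V} := V \times_Y V_0$. These are locally separated locally spatial diamonds by Remark~\ref{decentremarks}.i, and the four projections to the factors are charts (as base changes of charts). Using (a) and (d), the composition $\widetilde{W} \to W_0 \to V_0$ is compactifiable of locally finite dim.trg, so (b) applied to $\widetilde{W} \to \widetilde{V} \to V_0$ yields the same property for $\widetilde{W} \to \widetilde{V}$. Composing with the chart $\widetilde{V} \to V$ and then descending via (c) along the cohomologically smooth surjection $\widetilde{W} \to W$ gives the result for $W \to V$.

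Parts (i) and (ii) are short. For (i), pick any chart $U \to Z$; by the alternative form of fineness (Remark~\ref{fineremark}) applied to $g$, find a chart $V \to Y$ with $V \to U$ compactifiable of locally finite dim.trg; applying it again to $f$ with chart $V$ gives a chart $W \to X$ with $W \to V$ of the same form, and composing yields $W \to U$ with the property, proving $g \circ f$ fine. For (ii), given a fineness diagram $V \to Y$, $W \to X$, $W \to V$ for $f$ and any chart $\widetilde{V}_0 \to \widetilde{Y}$, the fiber products $\widetilde{V}_0 \times_Y V$ and $\widetilde{V}_0 \times_Y W$ are locally separated locally spatial diamonds by Remark~\ref{decentremarks}.i and give charts of $\widetilde{Y}$ and $\widetilde{X}$ respectively, with the natural map between them a base change of $W \to V$, hence still compactifiable of locally finite dim.trg by (a).

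Part (iii) is the crux and is where (b) is essential. Pick a chart $U \to Z$; by Remark~\ref{fineremark} applied to $g$, find a chart $V \to Y$ with $V \to U$ compactifiable of locally finite dim.trg ($V$ being a chart of $Y \times_Z U$), and by the same applied to $h = g \circ f$, find a chart $W_0 \to X$ with $W_0 \to U$ of the same form ($W_0$ a chart of $X \times_Z U$). Set $W := W_0 \times_{Y \times_Z U} V$, with $W_0 \to Y \times_Z U$ the composition through $X \times_Z U \to Y \times_Z U$; this ensures the two canonical maps $W \to U$ agree. Then $W$ is a locally separated locally spatial diamond, $W \to W_0 \to X$ is a chart of $X$, and the common composition $W \to V \to U = W \to W_0 \to U$ is compactifiable of locally finite dim.trg by (a) and (d). Two-out-of-three (b) applied to $W \to V \to U$ (with $V \to U$ good) now yields $W \to V$ compactifiable of locally finite dim.trg, so $f$ is fine. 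Part (iv) is direct: the backward direction picks any chart $V \to Y$ and a chart $W \to X \times_Y V$, making $W \to V$ factor as a chart composed with a base change of $f$, hence compactifiable of locally finite dim.trg by (a) and (d); the forward direction descends this property for $W \to V \to Y$ (obtained from the fineness diagram using (a) and (d)) along the cohomologically smooth surjection $W \to X$ via (c). The main anticipated difficulty is verifying the precise formulation of (b) and (c) in the ECoD setting, since these may not appear there verbatim.
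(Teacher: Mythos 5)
Your proposal is correct in substance and follows the same basic calculus (the properties of compactifiability and $\dim.\mathrm{trg}$ from \cite[Proposition 22.3]{ECoD} and its neighbors), but it organizes the argument through a somewhat different set of fiber products than the paper does, and it abstracts the four facts (a)--(d) a bit more than the paper is willing to. The main points of comparison are as follows. For part (o), you form $W\times_X W_0$ and $V\times_Y V_0$ and invoke cancellation (b) and descent (c) in one stroke; the paper instead runs a chain through $W\times_X W'$ and $V\times_Y W'$ and, crucially, treats compactifiability and the $\dim.\mathrm{trg}$ bound \emph{separately}: \cite[Proposition 22.3]{ECoD} handles the compactifiability part, but the local finiteness of $\dim.\mathrm{trg}$ requires a short bespoke argument using the universal openness of the chart (via \cite[Proposition 23.11]{ECoD}) to push neighborhoods around. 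This is exactly the place where your caveat at the end is warranted: your (b) and (c), as stated for the combined property ``compactifiable of locally finite $\dim.\mathrm{trg}$,'' are not literally in \cite{ECoD} and require the kind of supplementary argument the paper supplies. For parts (i) and (iii), your approach (picking a chart $U$ of $Z$ first and building compatible charts $V\to Y$, $W\to X$ over it, then using cancellation over a common base) is a little different from the paper's (which forms $T=V\times_{X_2}V'$ and, respectively, the triple $V\times_Y W\to V\times_Z U_2\to U_1\times_Z U_2$), but both ride on part (o) and cancellation; note that the justification ``by the alternative form of fineness'' in your (i) and (iii) is really an application of (o) to an arbitrary chart, which is fine because you prove (o) first. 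For part (ii), your argument via $\widetilde V_0\times_Y V$ and $\widetilde V_0\times_Y W$ is genuinely cleaner than the paper's three-dimensional diagram and its auxiliary chart $T$; they accomplish the same thing. For part (iv), you use descent (c) along $W\to X$ where the paper uses cancellation \cite[Proposition 22.3(iv,vii,viii)]{ECoD}; both are valid, and again the $\dim.\mathrm{trg}$ direction deserves the same care as in (o).
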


\begin{proof}
o. By Remark \ref{fineremark}, it suffices to show that if 
$f \colon X \to Y$ is a fine morphism,
there are
commutative
squares
\[
\vcenter{\xymatrix{W\ar[d]^b \ar[r]^g & V\ar[d]^a \\
X\ar[r]^f & Y
}} \text{ and }
\vcenter{\xymatrix{W' \ar[d]^{b'} \ar[r]^{g'} & V'\ar[d]^{a'} \\
X\ar[r]^f & Y
}}\,,
\]
where the vertical maps are clean charts, and $g'$ is compactifiable
of locally finite dim.trg, then
$g$ is also compactifiable of locally finite dim.trg.

We apply various parts of \cite[Proposition 22.3]{ECoD} to
show that the following maps are compactifiable:
$W \times_X W' \to W'$ by (ii),
$W \times_X W' \to V \times_Y W'$ by (viii),
$W \times_X W' \to V$ by (iv),
and finally, $g \colon W \to V$ by (vii).

Let $w \in W$, let $w' \in W'$ satisfy $b'(w')=b(w)$, and choose
open neighborhoods
$U \owns w$, $U' \owns w'$ so that $b|_U$ and $(a'\circ g')|_{U'}=(f \circ b')|_{U'}$ are of finite dim.trg.
Then, using a chain of maps similar to the one
in the previous paragraph, we see that $U \times_X U' \to V$ is of finite dim.trg.  By \cite[Proposition 23.11]{ECoD},
$a'$ is universally open, so $a^{-1}(a'(U')) \subset W$ is open.
Then the restriction of $g$ to $U \cap a^{-1}(a'(U'))$ has finite dim.trg.
Since $w$ was chosen arbitrarily, $g$ is of locally finite dim.trg.


i. Suppose $X_{1}\to X_{2}$ and $X_{2}\to X_{3}$ are fine maps
of decent v-stacks. Pick commutative squares
\[
\xymatrix{U\ar[d]\ar[r] & V\ar[d]\\
X_{1}\ar[r] & X_{2}
}
\]
and 
\[
\xymatrix{V'\ar[d]\ar[r] & W\ar[d]\\
X_{2}\ar[r] & X_{3}
}
\]
witnessing the fineness of these maps. Set $T=V\times_{X_{2}}V'$.
Then we get a commutative diagram
\[
\xymatrix{ & U\times_{V}T\ar[dl]\ar[r] & T\ar[dr]\ar[dl]\\
U\ar[dr]\ar[r] & V\ar[dr] &  & V'\ar[dl]\ar[r] & W\ar[dl]\\
 & X_{1}\ar[r] & X_{2}\ar[r] & X_{3}
}
\]
and throwing away most of it we get a diagram
\[
\xymatrix{U\times_{V}T\ar[r]\ar[d] & W\ar[d]\\
X_{1}\ar[r] & X_{3}
}
\]
witnessing the fineness of $X_{1}\to X_{3}$.

ii. Pick a diagram 
\[
\xymatrix{W\ar[d]^{b}\ar[r]^{g} & V\ar[d]^{a}\\
X\ar[r]^{f} & Y
}
\]
witnessing the fineness of $f$. Let $U\to\tilde{Y}$ be a chart for
$\tilde{Y}$. Then we get a commutative diagram
\[
\xymatrix{ &  & W\ar[rrr]\ar[ddd] &  &  & V\ar[ddd]\\
 & \tilde{W}\ar[rrr]\ar[ddd]\ar[ur] &  &  & \tilde{V}\ar[ddd]\ar[ur]\\
\tilde{W}_{U}\ar[rrr]\ar[ddd]\ar[ur] &  &  & \tilde{V}_{U}\ar[ddd]\ar[ur]\\
 &  & X\ar[rrr] &  &  & Y\\
 & \tilde{X}\ar[rrr]\ar[ur] &  &  & \tilde{Y}\ar[ur]\\
\tilde{X}_{U}\ar[rrr]\ar[ur] &  &  & U\ar[ur]
}
\]
where all vertical arrows are locally separated smooth etc. and the
parallelograms involving slanted arrows are all cartesian. Pick a
chart $T\to\tilde{X}_{U}$. Then we get a commutative square
\[
\xymatrix{T\times_{\tilde{X}_{U}}\tilde{W}_{U}\ar[d]\ar[r] & \tilde{V}_{U}\ar[d]\\
\tilde{X}\ar[r] & \tilde{Y}
}
\]
which I claim witnesses the fineness of $\tilde{X}\to\tilde{Y}$.

iii. We can find commutative diagrams
\[
\vcenter{\xymatrix{V\ar[d] \ar[r] & U_1\ar[d] \\
Y\ar[r]^g & Z
}} \text{ and }
\vcenter{\xymatrix{W \ar[d] \ar[r] & U_2 \ar[d] \\
X\ar[r]^h & Z
}}\,,
\]
where the vertical arrows are clean charts and the horizontal
arrows
are compactifiable of locally finite dim.trg.
Then
\[ \xymatrix{V \times_Y W \ar[r]\ar[d] & V \times_Z U_2 \ar[r]\ar[d] & U_1 \times_Z U_2 \ar[d] \\ X \ar[r]^f & Y \ar[r]^g & Z } \]
is also a commutative diagram where the vertical arrows are clean charts,
and $V \times_Y W \to U_1 \times_Z U_2$ and
$V \times_Z U_2 \to U_1 \times_Z U_2$ are compactifiable of locally finite
dim.trg.~by o.  Then $V \times_Y W \to V \times_Z U_2$
is compactifiable by \cite[Proposition 22.3(viii)]{ECoD}, and it has
locally finite dim.trg.


iv. Choose a commutative diagram
\[
\xymatrix{W\ar[d]^{b}\ar[r]^{g} & V\ar[d]^{a}\\
X\ar[r]^{f} & Y
}
\]
such that the vertical arrows are clean charts.  By o., $f$ is fine if
and only if $g$ is compactifiable of locally finite dim.trg.  By
\cite[Proposition 22.3(iv,vii,viii)]{ECoD}, $g$ is compactifiable iff $f$ is
compactifable.  By an argument similar to that of the proof of o.,
$b$ is universally open, and $g$ is locally on $W$ of finite dim.trg.~iff $f$
is locally on $X$ of finite dim.trg.
\end{proof}

\subsection{A fragment of DESCENT}

In this section we describe the fragment of Liu-Zheng's DESCENT algorithm
we will need. The strange numberings below are chosen to match the
numbering in \cite{LZ}.

\textbf{Input 0.} Suppose given a 4-marked $\infty$-category $(\tilde{\mathcal{C}},\tilde{\mathcal{E}}_{s},\tilde{\mathcal{E}}',\tilde{\mathcal{E}}'',\tilde{\mathcal{F}})$
together with a full subcategory $\mathcal{C}\subset\tilde{\mathcal{C}}$.
Write $\mathcal{E}_{s}=\tilde{\mathcal{E}}_{s}\cap\mathcal{C}$, and
likewise $\mathcal{E}'$, $\mathcal{E}''$, $\mathcal{F}$. They are
assumed to satisfy:

1. $\tilde{\mathcal{C}}$ is geometric, $\mathcal{C}\subseteq\tilde{\mathcal{C}}$
is stable under finite limits, and for all small coproducts $X=\coprod_{i}X_{i}$
in $\tilde{\mathcal{C}}$, $X$ belongs to $\mathcal{C}$ if and only
if all $X_{i}$ belong to $\mathcal{C}$.

3. $\tilde{\mathcal{E}}_{s},\tilde{\mathcal{E}}',\tilde{\mathcal{E}}'',\tilde{\mathcal{F}}$
are stable under composition, pullback and small coproducts, and $\tilde{\mathcal{E}}'\subseteq\tilde{\mathcal{E}}''\subseteq\tilde{\mathcal{F}}$.

4. For every object $X$ of $\tilde{\mathcal{C}}$, there exists an
edge $f:Y\to X$ in $\tilde{\mathcal{E}}_{s}\cap\tilde{\mathcal{E}}'$
with $Y$ in $\mathcal{C}$. Such an edge is called an \emph{atlas
}for $X$.

5. For every pullback square
\[
\xymatrix{W\ar[d]\ar[r] & Z\ar[d]\\
Y\ar[r]^{f} & X
}
\]
with $X\in\mathcal{\tilde{C}}$, $Y\in\mathcal{C}$ and $Z\in\mathcal{C}$,
and $f$ an atlas, then also $W\in\mathcal{C}$. Intuitively, ``atlas
maps are representable in $\mathcal{C}$''.

\textbf{Input I.} Suppose given an enhanced operation map \[
_{\mathcal{C}}\mathrm{EO}:\delta_{2,\{2\}}^{\ast}(\mathcal{C}_{\amalg})_{\mathcal{F},\mathrm{all}}^{\mathrm{cart}}\to\mathcal{C}\mathrm{at}_{\infty}
\]
satisfying the following properties.

\textbf{P0-P2.}\footnote{This is of course redundant, since it is exactly condition 1. in the definition of an enhanced operation map. We have written it out nonetheless to aid the reader in comparing with \cite[Section 4.1]{LZ}, where the notion of an enhanced operation map is not explicitly formalized and so P0-P2 have actual content.} The functor
\[
_{\mathcal{C}}\mathrm{EO}^{\mathrm{I}}:\mathcal{C}^{\mathrm{op},\amalg}\to\mathcal{C}\mathrm{at}_{\infty}
\]
induced by restriction to the ``all'' direction is a weak Cartesian
structure, and the induced functor $({_{\mathcal{C}}\mathrm{EO}^{\mathrm{I}}})^{\otimes}$
factors through $\mathrm{CAlg}(\mathcal{C}\mathrm{at}_{\infty})_{\mathrm{pr,st,cl}}^{\mathrm{L}}$
and sends small coproducts to products.

As in section 2.1, from $_{\mathcal{C}}\mathrm{EO}$ we obtain a map
\[
_{\mathcal{C}}\mathrm{EO}_{!}^{\ast}:\delta_{2,\{2\}}^{\ast}\mathcal{C}{}_{\mathcal{F},\mathrm{all}}^{\mathrm{cart}}\to\mathcal{P}\mathrm{r}_{\mathrm{st}}^{\mathrm{L}},
\]
which restricts further to maps
\[
_{\mathcal{C}}\mathrm{EO}^{\ast}:\mathcal{C}^{\mathrm{op}}\to\mathcal{P}\mathrm{r}_{\mathrm{st}}^{\mathrm{L}}\;\;\;\;\;\mathrm{and}\;\;\;\;\;{}_{\mathcal{C}}\mathrm{EO}_{!}:\mathcal{C}_{\mathcal{F}}\to\mathcal{P}\mathrm{r}_{\mathrm{st}}^{\mathrm{L}}.
\]
As before we write $\mathcal{D}(X)$ for the image of a 0-cell $X\in\mathcal{C}$
under either of these maps, and $f^{\ast}$ resp. $f_{!}$ for the
image of a 1-cell $f:Y\to X$ under the first resp. second map. Now
we can state the remaining properties we impose.

\textbf{P3.} If $f:Y\to X$ is an edge in $\mathcal{E}_{s}$, then
$f^{\ast}:\mathcal{D}(X)\to\mathcal{D}(Y)$ is conservative.

\textbf{P4.} If $f$ is an edge in $\mathcal{E}_{s}\cap\mathcal{E}''$,
then $f$ is of universal $_{\mathcal{C}}\mathrm{EO}^{\otimes}$-descent
and of universal $_{\mathcal{C}}\mathrm{EO}_{!}$-codescent.

\textbf{P5.} Let
\[
\xymatrix{W\ar[r]^{g}\ar[d]^{q} & Z\ar[d]^{p}\\
Y\ar[r]^{f} & X
}
\]
be a cartesian diagram in $\mathcal{C}$, with $f$ in $\mathcal{E}'$.
Then:

1) The square
\[
\xymatrix{\mathcal{D}(Z)\ar[r]^{p^{\ast}}\ar[d]^{g^{\ast}} & \mathcal{D}(X)\ar[d]^{f^{\ast}}\\
\mathcal{D}(W)\ar[r]^{q^{\ast}} & \mathcal{D}(Y)
}
\]
is right-adjointable, with a right adjoint a square in $\mathcal{P}\mathrm{r}_{\mathrm{st}}^{\mathrm{R}}$.

2) If $p$ is also in $\mathcal{E}'$, then the square
\[
\xymatrix{\mathcal{D}(X)\ar[r]^{f_{!}}\ar[d]^{p^{\ast}} & \mathcal{D}(Y)\ar[d]^{q^{\ast}}\\
\mathcal{D}(Z)\ar[r]^{g_{!}} & \mathcal{D}(W)
}
\]
is right-adjointable.

\textbf{P5bis.} Same as P5 but with $\mathcal{E}'$ replaced by $\mathcal{E}''$.

\textbf{Output I.} This consists of an enhanced operation map
\[
_{\mathcal{\tilde{C}}}\mathrm{EO}:\delta_{2,\{2\}}^{\ast}(\mathcal{\tilde{C}}_{\amalg})_{\tilde{\mathcal{F}},\mathrm{all}}^{\mathrm{cart}}\to\mathcal{C}\mathrm{at}_{\infty}
\]
extending Input I, and satisfying the obvious analogues of \textbf{P0}-\textbf{P5bis}
with $\tilde{\mathcal{C}},\tilde{\mathcal{E}}_{s},\tilde{\mathcal{E}}',\tilde{\mathcal{E}}'',\tilde{\mathcal{F}}$
in place of $\mathcal{C},\mathcal{E}_{s},\mathcal{E}',\mathcal{E}'',\mathcal{F}$.

We now have the following key theorem, which is the fragment of DESCENT
we will need.
\begin{thm}\label{descent}
Fix an Input 0. Then every Input I can be extended to an Output I
in an essentially unique way.
\end{thm}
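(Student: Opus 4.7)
The statement is a reformulation in the present setup of the core of the DESCENT algorithm developed in \cite[Section 4]{LZ}; my plan is to check that Input 0 and Input I correspond exactly to the hypotheses of the relevant Liu--Zheng theorem and to invoke their construction essentially verbatim. The conventions on geometric $\infty$-categories, markings, admissible subsets, and enhanced operation maps were introduced precisely so that DESCENT goes through in this form.

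The construction proceeds as a two-stage right Kan extension along the inclusion $\mathcal{C}\hookrightarrow\tilde{\mathcal{C}}$. First I would extend the ``pullback only'' part $_{\mathcal{C}}\mathrm{EO}^I$ from $\mathcal{C}^{\mathrm{op},\amalg}$ to $\tilde{\mathcal{C}}^{\mathrm{op},\amalg}$: for each $X\in\tilde{\mathcal{C}}$, choose an atlas $Y\to X$ (existence is axiom 4 of Input 0); axiom 5 then ensures by induction that the entire Cech nerve $Y_\bullet$ of $Y\to X$ lies in $\mathcal{C}$. The descent half of P4 implies that $\mathcal{D}(X):=\lim_\bullet \mathcal{D}(Y_\bullet)$, computed in $\mathrm{CAlg}(\mathcal{C}\mathrm{at}_\infty)_{\mathrm{pr,st,cl}}^{\mathrm{L}}$, is canonically independent of the choice of atlas and recovers the original $\mathcal{D}$ when $X\in\mathcal{C}$. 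Second, I would extend the $!$-pushforward by the analogous codescent procedure: given $\tilde{f}\colon\tilde{X}\to\tilde{Y}$ in $\tilde{\mathcal{F}}$, choose an atlas of $\tilde{Y}$ and pull back along $\tilde{f}$ to obtain an atlas of $\tilde{X}$ lying over a morphism in $\mathcal{F}$, take $!$-pushforwards levelwise on the Cech nerves, and take the colimit to define $\tilde{f}_!$; the codescent half of P4 guarantees this is well-defined.

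The main obstacle is that these two pieces must be assembled into a single simplicial-set map $_{\tilde{\mathcal{C}}}\mathrm{EO}\colon \delta_{2,\{2\}}^{\ast}(\tilde{\mathcal{C}}_\amalg)^{\mathrm{cart}}_{\tilde{\mathcal{F}},\mathrm{all}}\to \mathcal{C}\mathrm{at}_\infty$ carrying all higher coherences: base change, projection formula, associativity of $!$-pushforward under composition, and their iterated compatibilities. This is where P5 and P5bis earn their keep: they supply the right-adjointability of base-change squares in $\mathcal{C}$, which combined with the descent and codescent of P4 propagate to the analogous adjointability for all base-change squares in $\tilde{\mathcal{C}}$. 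This adjointability is exactly what is needed for the base change and projection isomorphisms to extend coherently at the simplicial-set level, not just on underlying $1$-morphisms.

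Essential uniqueness is a formal consequence of the universal property of right Kan extensions: any two extensions satisfying the output conditions agree on $\mathcal{C}$ by Input I and hence agree on all of $\tilde{\mathcal{C}}$ by descent via atlases. The analogues of P3--P5bis on $\tilde{\mathcal{C}}$ are then each obtained by a straightforward reduction to the corresponding property on $\mathcal{C}$ via choice of an atlas; in each case the reduction is underwritten by P3 (conservativity of pullback along atlases) and the coherence data produced by the Kan extension.
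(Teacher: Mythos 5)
Your instinct is exactly right: the paper's entire proof of this theorem is the citation ``This is a special case of (the proof of) \cite[Theorem 4.1.8.(1)]{LZ},'' and you correctly identify the result as a direct application of Liu--Zheng's DESCENT machinery. So at the level of what the paper actually does, your proposal takes the same approach.

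However, the sketch you add of how that construction works contains a couple of inaccuracies that would matter if one tried to turn it into an independent proof. First, in the $!$-extension step you write that one pulls back an atlas of $\tilde Y$ along $\tilde f$ to obtain an atlas of $\tilde X$; but axiom 5 of Input 0 only guarantees that a pullback of an atlas against an object already in $\mathcal{C}$ lands in $\mathcal{C}$, so $V\times_{\tilde Y}\tilde X$ need not lie in $\mathcal{C}$ and the map $V\times_{\tilde Y}\tilde X\to\tilde X$ is therefore not in general an atlas. The correct heuristic (cf.\ the discussion around equation \eqref{EqRfHeuristic} and \eqref{EqCohoCodenscent} in the introduction) is to pick an atlas $g\colon U\to\tilde X$ of the \emph{source} with Cech nerve $g_n$ and to express $\tilde f_!A$ as $\mathrm{colim}_{n\in\boldsymbol{\Delta}}(\tilde f\circ g_n)_!\,g_n^!A$. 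Second, framing the extension as ``a two-stage right Kan extension'' with uniqueness deduced ``from the universal property of right Kan extensions'' is a useful intuition but overstates the simplicity: Liu--Zheng's construction builds the full simplicial-set map $_{\tilde{\mathcal{C}}}\mathrm{EO}$ on the restricted multisimplicial nerve via a chain of auxiliary multimarked $\infty$-categories, partial adjunction passages, and explicit coherence bookkeeping, which is not literally captured by a Kan-extension universal property; the uniqueness statement in Theorem 4.1.8(1) is likewise an output of that machinery rather than a one-line formal consequence. None of this affects the validity of the proof as written (the citation carries the weight), but the expository sketch should be corrected if it is to be read as an account of how the Liu--Zheng argument actually proceeds.
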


\begin{proof}
This is a special case of (the proof of) \cite[Theorem 4.1.8.(1)]{LZ}.
\end{proof}

\subsection{First iteration of DESCENT}

We begin by running DESCENT with the following inputs.

For Input 0, we make the following choices.
\begin{itemize}
\item $\tilde{\mathcal{C}}=\mathcal{D}\mathrm{ia}^{\mathrm{qs.lsep.lspat}}$
where $\mathcal{D}\mathrm{ia}^{\mathrm{qs.lsep.lspat}}$ is the category
of quasiseparated locally separated locally spatial diamonds.
\item $\mathcal{C}=\mathcal{D}\mathrm{ia}^{\mathrm{sep.spat}}$ where
$\mathcal{D}\mathrm{ia}^{\mathrm{sep.spat}}\subset\mathcal{D}\mathrm{ia}^{\mathrm{qs.lsep.lspat}}$
is the full subcategory spanned by small coproducts of separated spatial diamonds.
\item $\tilde{\mathcal{E}}_{s}$ is the set of morphisms which are surjective as maps of v-sheaves.
\item $\tilde{\mathcal{E}}'$ is the set of (locally separated) \'etale morphisms
in $\mathcal{D}\mathrm{ia}^{\mathrm{qs.lsep.lspat}}$.
\item $\tilde{\mathcal{E}}''$ is the set of (locally separated) cohomologically
smooth morphisms in $\mathcal{D}\mathrm{ia}^{\mathrm{qs.lsep.lspat}}$.
\item $\tilde{\mathcal{F}}$ is the set of fine morphisms.
\end{itemize}
It is easy to see that these choices satisfy the conditions required
of an Input 0. The only point which isn't immediate is condition 5.,
which follows from the next lemma.
\begin{lem}
Let 
\[
\xymatrix{W\ar[r]\ar[d] & Y\ar[d]^{g}\\
X\ar[r]^{f} & Z
}
\]
be a cartesian diagram of v-sheaves, where $X$ and $Y$ are separated
spatial diamonds, and $Z$ is a quasiseparated locally separated locally
spatial diamond. Then $W$ is a separated spatial diamond.
\end{lem}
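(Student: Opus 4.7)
The plan is to realize $W$ as a quasicompact locally closed subdiamond of the separated spatial diamond $X\times Y$, and then invoke standard preservation results from \cite{ECoD}. The basic identity is
\[
W \;\cong\; (X\times Y)\times_{Z\times Z,\,\Delta_Z} Z,
\]
so that $W\to X\times Y$ is obtained by base change from the diagonal $\Delta_Z\colon Z\to Z\times Z$. By hypothesis $Z$ is locally separated, so $\Delta_Z$ is a locally closed immersion, and $Z$ is quasiseparated, so $\Delta_Z$ is quasicompact. Both of these properties are stable under base change, and therefore $W\hookrightarrow X\times Y$ is a quasicompact locally closed immersion.

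Next I would observe that $X\times Y$ is a separated spatial diamond: $X$ and $Y$ are separated spatial by hypothesis, and these properties are preserved under finite products in the category of v-sheaves. It then remains to argue that any quasicompact locally closed subdiamond of a separated spatial diamond is itself a separated spatial diamond. For this, factor the immersion as a quasicompact open immersion $W\hookrightarrow \overline{W}$ followed by a closed immersion $\overline{W}\hookrightarrow X\times Y$. Closed subdiamonds of separated spatial diamonds are separated spatial, and quasicompact open subdiamonds of separated spatial diamonds are separated spatial, by \cite[Propositions 11.19--11.20]{ECoD}. Separatedness of $W$ is inherited from $X\times Y$ along any immersion.

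There is no real obstacle in this argument; it is a mechanical chain of preservation results. The only point requiring a moment's attention is the translation of the hypotheses on $Z$ into the statement that $\Delta_Z$ is a quasicompact locally closed immersion, which is essentially the definitions of locally separated and quasiseparated. Everything else is a direct application of the stability properties of spatial diamonds, locally closed immersions, and separatedness already established in \cite{ECoD}.
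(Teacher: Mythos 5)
Your approach is genuinely different from the paper's. The paper establishes that $W$ is qcqs by shrinking the base: since $X$ and $Y$ are quasicompact and $Z$ is quasiseparated, the structure maps have quasicompact image in $|Z|$, so one finds a quasicompact open $U\subset Z$ with $W=X\times_U Y$, a fiber product of qcqs spatial diamonds. You instead embed $W$ into $X\times Y$ via $W\cong (X\times Y)\times_{Z\times Z,\Delta_Z}Z$ (the same identity used in Remark \ref{decentremarks}.i) and argue through the properties of $\Delta_Z$. Both strategies are sound; the paper's is slightly shorter, yours makes the role of the diagonal of $Z$ explicit, which is in line with how condition (1) of Definition \ref{decent} is phrased.

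There is, however, a real wrinkle in your factorization step. A locally closed immersion is by convention a \emph{closed} immersion followed by an \emph{open} immersion, i.e., $W\hookrightarrow U\hookrightarrow X\times Y$ with $W$ closed in the open $U$; you instead assert a factorization $W\hookrightarrow\overline{W}\hookrightarrow X\times Y$ with $W$ open in a closed $\overline{W}$. That reversed factorization is not a formal consequence: it amounts to the topological closure of $W$ in $X\times Y$ carrying a closed subdiamond structure, which needs an argument. The simplest repair does not need the closure at all. Since $W$ is quasicompact and $U$ is an open subdiamond of the (locally) spatial $X\times Y$, the space $|U|$ has a basis of quasicompact open subsets, so one may shrink $U$ to a quasicompact open $U'$ still containing $W$; then $U'$ is a separated spatial diamond, $W$ is closed in $U'$, hence separated spatial. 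Even more directly, you can bypass the locally closed structure entirely: $W\to X\times Y$ is a quasicompact injection (all that is used is that $\Delta_Z$ is quasicompact, i.e., $Z$ is quasiseparated), so $W$ is locally spatial by \cite[Proposition 11.20]{ECoD}, and $W$ is qcqs and separated as a subfunctor of the qcqs separated $X\times Y$, hence separated spatial.
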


\begin{proof}
Since $W=X\times_{Z}Y$ is a subfunctor of $X\times Y$, and $X\times Y$
is separated, we immediately get that $W$ is separated. Local spatiality
of $W$ is clear, so it remains to see that $W$ is qcqs. Since $X$
and $Y$ are quasicompact and $Z$ is quasiseparated, the maps $f,g$
are quasicompact. In particular, $|f|$ and $|g|$ have quasicompact
images in $|Z|$. By local spatiality of $Z$, we may pick some sufficiently
large quasicompact open subfunctor $U\subset Z$ such that $|U|\supset(\mathrm{im}|f|\cup\mathrm{im}|g|)$.
Then since $Z$ is quasiseparated, $U$ is automatically qcqs, so
$W=X\times_{Z}Y=X\times_{U}Y$ is a fiber product of qcqs objects,
and therefore is qcqs as desired.
\end{proof}
As Input I, we take the enhanced operation map constructed in section
3.2 for a specific choice of coefficient ring $\Lambda$, restricted from $\mathcal{V}\mathrm{sh}^{\mathrm{qcqs}}$ to $\mathcal{D}\mathrm{ia}^{\mathrm{sep.spat}}$,
noting that in $\mathcal{D}\mathrm{ia}^{\mathrm{sep.spat}}$ weakly
compactifiable maps exactly coincide with fine maps. P0-P2 and P3
are clear. P4 follows from Proposition \ref{keydescent}. P5 and P5bis follow
from a combination of smooth and proper base change. 

Theorem \ref{descent} now applies, yielding an enhanced operation map on $\tilde{\mathcal{C}}$,
and in particular a functor
\[
_{\mathcal{D}\mathrm{ia}^{\mathrm{qs.lsep.lspat}}}\mathrm{EO}:\delta_{2,\{2\}}^{\ast}(\mathcal{D}\mathrm{ia}^{\mathrm{qs.lsep.lspat}}_{\amalg})_{\mathcal{F},\mathrm{all}}^{\mathrm{cart}}\to\mathcal{C}\mathrm{at}_{\infty}
\]
where $\mathcal{F}$ denotes the set of fine morphisms.

\subsection{Second iteration of DESCENT}

We run DESCENT with the following input.

For Input 0, we make the following choices.
\begin{itemize}
\item $\tilde{\mathcal{C}}=\mathcal{D}\mathrm{ia}^{\mathrm{lsep.lspat}}$
where $\mathcal{D}\mathrm{ia}^{\mathrm{lsep.lspat}}$ is the category
of locally separated locally spatial diamonds.
\item $\mathcal{C}=\mathcal{D}\mathrm{ia}^{\mathrm{qs.lsep.lspat}}$
where $\mathcal{D}\mathrm{ia}^{\mathrm{qs.lsep.lspat}}\subset\mathcal{D}\mathrm{ia}^{\mathrm{lsep.lspat}}$
is the category of quasiseparated locally separated locally spatial
diamonds.
\item $\tilde{\mathcal{E}}_{s}$ is the set of surjective morphisms of v-sheaves.
\item $\tilde{\mathcal{E}}'$ is the set of (locally separated) \'etale morphisms
in $\mathcal{D}\mathrm{ia}^{\mathrm{lsep.lspat}}$.
\item $\tilde{\mathcal{E}}''$ is the set of (locally separated) cohomologically
smooth morphisms in $\mathcal{D}\mathrm{ia}^{\mathrm{lsep.lspat}}$.
\item $\tilde{\mathcal{F}}$ is the set of fine morphisms in $\mathcal{D}\mathrm{ia}^{\mathrm{lsep.lspat}}$.
\end{itemize}
It is easy to see that these choices satisfy the conditions required
of an Input 0. The only point which isn't immediate is condition 5.,
which follows from the next lemma.
\begin{lem}
Let 
\[
\xymatrix{W\ar[r]\ar[d] & Y\ar[d]^{g}\\
X\ar[r]^{f} & Z
}
\]
be a cartesian diagram of small v-sheaves, where $X$ and $Y$ are
quasiseparated locally separated locally spatial diamonds, and $Z$
is a locally separated locally spatial diamond. Then $W$ is a quasiseparated
locally separated locally spatial diamond.
\end{lem}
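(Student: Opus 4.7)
The plan is to split the claim into two independent parts: that $W$ is a locally separated locally spatial diamond (shown by a local argument on $Z$), and that $W$ is quasiseparated (shown globally using the injection $W \hookrightarrow X \times Y$).

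For the first part, choose an open cover $Z = \bigcup_k Z_k$ by separated spatial open subdiamonds, and set $X_k := f^{-1}(Z_k) \subseteq X$ and $Y_k := g^{-1}(Z_k) \subseteq Y$. Using that $X$ and $Y$ are locally separated and locally spatial, further refine each $X_k = \bigcup_i X_{k,i}$ and $Y_k = \bigcup_j Y_{k,j}$ into open covers by separated spatial subdiamonds. Then
\[
W = \bigcup_{k,i,j} X_{k,i} \times_{Z_k} Y_{k,j},
\]
and each piece is a fiber product of three separated spatial diamonds over a separated spatial target. By the lemma from the previous subsection (or directly, since separatedness, quasicompactness, and spatiality are all stable under such fiber products), each piece is separated spatial. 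Hence $W$ has an open cover by separated spatial subdiamonds, which gives local spatiality; it also gives local separatedness, since on each such piece the diagonal of $W$ agrees with the diagonal of a separated diamond, which is a closed immersion.

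For quasiseparatedness, observe that $W \to X \times Y$ is an injection of v-sheaves, being the equalizer of the two maps $X \times Y \rightrightarrows Z$ induced by $f$ and $g$. Equivalently, the square
\[
\xymatrix{
W \ar[r]^-{\Delta_W} \ar[d] & W \times W \ar[d] \\
X \times Y \ar[r]_-{\Delta_{X \times Y}} & (X \times Y) \times (X \times Y)
}
\]
is cartesian. Since $X$ and $Y$ are quasiseparated, so is $X \times Y$, meaning $\Delta_{X \times Y}$ is quasicompact; quasicompactness is stable under base change, so $\Delta_W$ is quasicompact, and $W$ is quasiseparated. There is no serious obstacle in the argument; the only mildly subtle point is the existence of open covers by separated spatial subdiamonds of a locally separated locally spatial diamond, which is a standard consequence of the definitions.
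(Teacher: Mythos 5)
Your proof is correct and follows essentially the same structure as the paper's: split into (a) local structure, handled by working locally, and (b) quasiseparatedness, handled by realizing $W$ as a subfunctor of $X\times Y$ and pulling back the diagonal. The paper dispatches (a) with the single sentence ``it is clear that $W$ is a locally separated locally spatial diamond,'' so your detailed cover argument is a reasonable expansion of that.

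The one place where you leave a genuine gap is the assertion ``Since $X$ and $Y$ are quasiseparated, so is $X\times Y$.'' This is precisely the nontrivial ingredient that the paper takes care to justify: it cites the fact that small v-sheaves form an algebraic topos \cite[Proposition 8.3]{ECoD} and then invokes \cite[VI, Proposition 2.2.(ii)]{SGA4T2}. You should either cite this, or supply a direct argument; one such argument is that, after the reshuffling isomorphism $(X\times Y)\times(X\times Y)\cong(X\times X)\times(Y\times Y)$, the diagonal $\Delta_{X\times Y}$ factors as $(\Delta_X\times\mathrm{id}_{Y\times Y})\circ(\mathrm{id}_X\times\Delta_Y)$, and each factor is a base change of a quasicompact diagonal, so $\Delta_{X\times Y}$ is quasicompact as a composite of quasicompact maps. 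With that one sentence added, the argument is complete.
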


\begin{proof}
It is clear that $W$ is a locally separated locally spatial diamond.
We need to see that $W$ is quasiseparated. Since $W=X\times_{Z}Y\subset X\times Y$
is a subfunctor of $X\times Y$, and quasiseparatedness passes to
subfunctors, it's enough to see that $X\times Y$ is quasiseparated.
But small v-sheaves form an algebraic topos by \cite[Proposition
8.3]{ECoD}, so for any quasiseparated objects $X$ and $Y$, also $X\times Y$
is quasiseparated by \cite[VI, Proposition 2.2.(ii)]{SGA4T2}. 
\end{proof}
As Input I, we take the enhanced operation map constructed as Output
I of the first iteration. P0-P5bis are automatic, since they hold for
any Output I. 

Theorem \ref{descent} now applies again, yielding an enhanced operation map
on $\tilde{\mathcal{C}}$, and in particular a functor
\[
_{\mathcal{D}\mathrm{ia}^{\mathrm{lsep.lspat}}}\mathrm{EO}:\delta_{2,\{2\}}^{\ast}(\mathcal{D}\mathrm{ia}^{\mathrm{lsep.lspat}}_{\amalg})_{\mathcal{F},\mathrm{all}}^{\mathrm{cart}}\to\mathcal{C}\mathrm{at}_{\infty}
\]
where $\mathcal{F}$ denotes the set of fine morphisms.

\subsection{Third iteration of DESCENT}

We run DESCENT with the following input.

For Input 0, we make the following choices.
\begin{itemize}
\item $\tilde{\mathcal{C}}=\mathcal{V}\mathrm{stk}^{\mathrm{dct}}$ where
$\mathcal{V}\mathrm{stk}^{\mathrm{dct}}$ is the category of decent
v-stacks.
\item $\mathcal{C}=\mathcal{D}\mathrm{ia}^{\mathrm{lsep.lspat}}$ where
$\mathcal{D}\mathrm{ia}^{\mathrm{lsep.lspat}}\subset\mathcal{V}\mathrm{stk}^{\mathrm{dct}}$
is the category of locally separated locally spatial diamonds.
\item $\tilde{\mathcal{E}}_{s}$ is the set of surjective morphisms of v-stacks.
\item $\tilde{\mathcal{E}}'=\tilde{\mathcal{E}}''$ is the set of cohomologically
smooth morphisms in $\mathcal{V}\mathrm{stk}^{\mathrm{dct}}$.
\item $\tilde{\mathcal{F}}$ is the set of fine morphisms in $\mathcal{V}\mathrm{stk}^{\mathrm{dct}}$.
\end{itemize}
It is easy to see that these choices satisfy the conditions required
of an Input 0. As Input I, we take the enhanced operation map constructed
as Output I of the second iteration. P0-P5bis are automatic, since they
hold for any Output I. 

Theorem \ref{descent} now applies again, yielding an enhanced operation map
on $\tilde{\mathcal{C}}$, and in particular a functor
\[
_{\mathcal{V}\mathrm{stk}^{\mathrm{dct}}}\mathrm{EO}:\delta_{2,\{2\}}^{\ast}(\mathcal{V}\mathrm{stk}_{\amalg}^{\mathrm{dct}})_{\mathcal{F},\mathrm{all}}^{\mathrm{cart}}\to\mathcal{C}\mathrm{at}_{\infty}
\]
where $\mathcal{F}$ denotes the set of fine morphisms.

\subsection{Endgame}
In this section we complete the proof of Theorem \ref{mainthmtechnical}.

Let 
\begin{equation} _{\mathcal{V}\mathrm{stk}^{\mathrm{dct}}}\mathrm{EO}:\delta_{2,\{2\}}^{\ast}(\mathcal{V}\mathrm{stk}_{\amalg}^{\mathrm{dct}})_{\mathcal{F},\mathrm{all}}^{\mathrm{cart}}\to\mathcal{C}\mathrm{at}_{\infty}
\end{equation}
be the enhanced operation map obtained as the output of the third iteration of DESCENT in the previous section. We freely use the notation from section 2.1 for various subordinate structures obtained from this map.

\begin{prop}\label{compatibleone} For any $X \in \mathcal{V}\mathrm{stk}^{\mathrm{dct}}$, we have $\mathcal{D}(X) \cong \mathcal{D}_{\mathrm{\acute{e}t}}(X,\Lambda)$ compatibly with the symmetric monoidal structures and with all $\ast$-pullbacks.
\end{prop}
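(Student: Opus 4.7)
The plan is to proceed by induction on the three iterations of DESCENT used to build $_{\mathcal{V}\mathrm{stk}^{\mathrm{dct}}}\mathrm{EO}$. The base case is Theorem \ref{seed}: on $\mathcal{V}\mathrm{sh}^{\mathrm{qcqs}}$, the seed EOM satisfies $\mathcal{D}(X) = \mathcal{D}_{\mathrm{\acute{e}t}}(X,\Lambda)$ with its natural symmetric monoidal structure and $\ast$-pullback functoriality, by construction. I will then show, at each iteration, that the extension produced by DESCENT preserves this identification on the enlarged category.

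For the inductive step, the key point is that the DESCENT machinery of \cite[\S 4.1]{LZ} computes the output EOM by descent along atlases. Concretely, for any $X$ in the larger category $\tilde{\mathcal{C}}$ and any atlas $f\colon Y \to X$ with \v{C}ech nerve $Y_\bullet$, the universal $_{\mathcal{C}}\mathrm{EO}^{\otimes}$-descent property P4 yields a natural equivalence
\[
\left(_{\tilde{\mathcal{C}}}\mathrm{EO}^{\mathrm{I}}\right)^{\otimes}(X) \simeq \lim_{[n]\in\boldsymbol{\Delta}} \left(_{\mathcal{C}}\mathrm{EO}^{\mathrm{I}}\right)^{\otimes}(Y_n)
\]
in $\mathrm{CAlg}(\mathcal{C}\mathrm{at}_{\infty})_{\mathrm{pr,st,cl}}^{\mathrm{L}}$, where the transition maps are $\ast$-pullbacks. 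By the inductive hypothesis, each $\mathcal{D}(Y_n) \simeq \mathcal{D}_{\mathrm{\acute{e}t}}(Y_n,\Lambda)$ compatibly with symmetric monoidal structures and $\ast$-pullbacks. On the other hand, $\mathcal{D}_{\mathrm{\acute{e}t}}(-,\Lambda)$ enjoys the analogous descent: at the first iteration this is Proposition \ref{keydescent}(1), and at later iterations the atlas $f$ is a surjective cohomologically smooth morphism, hence a v-cover, so v-descent of $\mathcal{D}_{\mathrm{\acute{e}t}}$ applies. Combining these two equivalences yields $\mathcal{D}(X) \simeq \mathcal{D}_{\mathrm{\acute{e}t}}(X,\Lambda)$ as closed symmetric monoidal stable presentable $\infty$-categories.

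Compatibility of this identification with $\ast$-pullback along an arbitrary morphism $g\colon X' \to X$ in $\tilde{\mathcal{C}}$ is then obtained by choosing compatible atlases $Y' \to X'$ and $Y \to X$ dominating $g$ (possible because the atlas class is stable under pullback), and invoking the naturality of the above identifications at the level of \v{C}ech nerves. The main bookkeeping obstacle will be verifying that the limits produced by the DESCENT algorithm of \cite{LZ} and those witnessing v-descent of $\mathcal{D}_{\mathrm{\acute{e}t}}$ are genuinely formed in the same ambient $\infty$-category with matching transition functors; this should follow by carefully unwinding the construction, keeping in mind that the extra $!$-pushforward data in the EOM plays no role for this statement, which concerns only the $\ast$-part of the structure. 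Uniqueness of the DESCENT output (Theorem \ref{descent}) ensures the identification is well-defined at each stage and the induction proceeds.
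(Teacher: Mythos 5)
Your proposal is correct and follows essentially the same strategy as the paper: trace through the three iterations of DESCENT, use that the output $\mathcal{D}(X)$ is built as a cosimplicial limit over the \v{C}ech nerve of an atlas with $\ast$-pullback transition maps, match this against v-descent for $\mathcal{D}_{\mathrm{\acute{e}t}}(-,\Lambda)$ (noting all atlases are v-covers), and anchor the induction in Theorem~\ref{seed}. The paper's proof is the same argument, just stated a bit more compactly, so no substantive gap or divergence.
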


\begin{proof}For this we need to dig into the proof of Theorem \ref{descent}. For the moment, let $\mathcal{C} \subset \tilde{\mathcal{C}}$ be as in the general setup for DESCENT. The proof of Theorem \ref{descent} extends the association $X \rightsquigarrow \mathcal{D}(X)$ from $\mathcal{C}$ to $\tilde{\mathcal{C}}$ as follows. For any $X \in \tilde{\mathcal{C}}$, let $X_0 \to X$ be a choice of atlas, with Cech nerve $X_\bullet \to X$. By the general properties of any Input 0, we have $X_n \in \mathcal{C}$ for all $n \geq 0$. Then $\mathcal{D}(X)$ with its symmetric monoidal structure is constructed as the limit of the cosimplicial $\infty$-category $n\in \boldsymbol{\Delta} \mapsto \mathcal{D}(X_n)$, where the transition maps are given by $\ast$-pullback. (Of course, the proof of Theorem \ref{descent} also accounts for the ambiguity arising from the choice of a particular atlas, roughly by taking a further limit over all possible atlases, and also handles the $\ast$-pullbacks.)

Returning to the situation at hand, we already know that $\mathcal{D}_{\mathrm{\acute{e}t}}(-,\Lambda)$ is a v-sheaf of symmetric monoidal $\infty$-categories on the category of \emph{all} small v-stacks. Moreover, the enhanced operation map constructed in Theorem \ref{seed} satisfies $\mathcal{D}(X) \cong \mathcal{D}_{\mathrm{\acute{e}t}}(X,\Lambda)$ for all $X \in \mathcal{D}\mathrm{ia}^{\mathrm{sep.spat}}$ by design, compatibly with the symmetric monoidal structure and with $\ast$-pullbacks. Since in each of our three iterations of DESCENT the atlas maps were chosen to be v-covers, it is now clear by the argument in the previous paragraph that the enhanced operation map obtained at the output of each successive iteration still satisfies $_{\tilde{\mathcal{C}}} \mathrm{EO}(X) \cong \mathcal{D}_{\mathrm{\acute{e}t}}(X,\Lambda)$ (for $\tilde{\mathcal{C}} \in \{\mathcal{D}\mathrm{ia}^{\mathrm{qs.lsep.lspat}}, \mathcal{D}\mathrm{ia}^{\mathrm{lsep.lspat}}, \mathcal{V}\mathrm{stk}^{\mathrm{dct}} \} $) compatibly with the symmetric monoidal structures and with $\ast$-pullbacks. This gives the result.
\end{proof}

\begin{prop}\label{compatibletwo} Let $g:X\to Y$ be a map of decent v-stacks which is representable in locally spatial diamonds and compactifiable of locally finite dim.trg, so in particular $g$ is fine. Then the functor $g_!$ obtained from (2) coincides with the functor $Rg_!$ constructed in \cite[Section 22]{ECoD}.
\end{prop}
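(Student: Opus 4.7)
The strategy mirrors Proposition \ref{compatibleone}, but with proper base change along a chart of $Y$ playing the role that v-descent for the symmetric monoidal pullback plays in the previous result. The goal is to reduce, via repeated application of proper base change along atlases, to the compatibility built into Theorem \ref{seed}(iii) on the seed category $\mathcal{V}\mathrm{sh}^{\mathrm{qcqs}}$.

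Concretely, choose a chart $a: V \to Y$; by Remark \ref{decentremarks}.ii, one may take $V$ to be a separated locally spatial diamond, and by covering $V$ by spatial opens one may further arrange that $V$ is a small coproduct of separated spatial diamonds with $a$ surjective, separated, and cohomologically smooth. Let $a_\bullet: V_\bullet \to Y$ be the \v{C}ech nerve, so each $V_n$ remains a coproduct of separated spatial diamonds, hence lies in $\mathcal{V}\mathrm{sh}^{\mathrm{qcqs}}$. Form the pullbacks $g_n: X_n := V_n \times_Y X \to V_n$ and projections $a'_n: X_n \to X$. Since $g$ is representable in locally spatial diamonds and compactifiable of locally finite dim.trg, each $X_n$ is a locally separated locally spatial diamond, and each $g_n$ is weakly compactifiable.

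Property P5 of the enhanced operation map, applied at each level of the \v{C}ech nerve, yields natural proper base change equivalences $a_n^{\ast} g_{!} \simeq (g_n)_{!} (a'_n)^{\ast}$ compatible with the cosimplicial structure. The analogous equivalences $a_n^{\ast} Rg_{!} \simeq R(g_n)_{!} (a'_n)^{\ast}$ hold by classical proper base change in \cite{ECoD}, and the pullback functors on the two sides are identified by Proposition \ref{compatibleone}. Since $a$ is a v-cover and $\mathcal{D}_{\et}(-,\Lambda)$ is a v-sheaf of $\infty$-categories, pullback along the \v{C}ech nerve gives $\mathcal{D}_{\et}(Y,\Lambda) \simeq \lim_{n \in \boldsymbol{\Delta}} \mathcal{D}_{\et}(V_n,\Lambda)$. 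A natural transformation $g_{!} \to Rg_{!}$ can therefore be tested after applying each $a_n^{\ast}$, so it suffices to show $(g_n)_{!} \simeq R(g_n)_{!}$ for each $n$, compatibly across $\boldsymbol{\Delta}$. For each $n$, pick a further atlas $b: W \to X_n$ with $W$ a small coproduct of separated spatial diamonds, by covering $X_n$ with spatial opens (possible since $X_n$ is locally separated locally spatial). Both sides satisfy universal codescent along $b$: for the enhanced operation map via Proposition \ref{keydescent}(2) transported through the DESCENT iterations, and for \cite{ECoD} by smooth descent as in Scholium \ref{weaklycompactifiableshriek}. Expanding via the \v{C}ech nerve $b_\bullet$, each term $(g_n \circ b_m)_{!}$ is now a $!$-pushforward along a weakly compactifiable morphism between objects in the seed category, so coincides with $R(g_n \circ b_m)_{!}$ by Theorem \ref{seed}(iii), and the twists $b_m^{!}$ match analogously.

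The main obstacle is not the structural argument but the bookkeeping: one must verify that the abstract proper base change arising from P5 and the codescent formula produced by the enhanced operation map coincide, on the nose, with the classical proper base change and the smooth codescent of \cite{ECoD}. This requires tracing the construction of $_{\mathcal{V}\mathrm{stk}^{\mathrm{dct}}}\mathrm{EO}$ through the three DESCENT iterations and through the passage to adjoints in directions $1$ and $2$ carried out in Section 3.2. This bookkeeping is ultimately formal, as both structures are characterized by compatibility with $\ast$-pullback plus the seed data, where Theorem \ref{seed}(iii) matches them by construction.
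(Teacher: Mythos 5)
Your overall strategy — reduce to the seed category by descent along a cover of $Y$, then handle the source side — is in the same spirit as the paper's argument, and your first descent step along a cover of $Y$ is essentially what the paper does. However, there are two genuine gaps in your proposal, both fixed by the paper in ways worth noting.

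First, you take the \v{C}ech nerve $V_\bullet$ of a single chart $a\colon V\to Y$ and assert that each $V_n$ remains a coproduct of separated spatial diamonds. This fails in general: $V\times_Y V$ is a locally separated locally spatial diamond (by decency of $Y$), but there is no reason for it to be quasicompact unless $Y$ is quasiseparated. The paper instead uses a \emph{v-hypercover} $Y_\bullet\to Y$ by coproducts of separated spatial diamonds, which permits refinement at each simplicial level and genuinely keeps every $Y_n$ in the seed category. This same issue recurs in your source-side atlas $b\colon W\to X_n$: the \v{C}ech nerve $W_\bullet$ need not stay inside coproducts of separated spatial diamonds.

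Second, and more seriously, your key step on the source side appeals to ``universal codescent for $R(g_n)_!$'' along $b$, citing Scholium~\ref{weaklycompactifiableshriek}. But cohomological codescent is an $\infty$-categorical statement (a colimit diagram of functors over $\boldsymbol{\Delta}^{\mathrm{op}}$) that \cite{ECoD} does not establish for its functor $Rg_!$, and Scholium~\ref{weaklycompactifiableshriek} only asserts that the triangulated-level $Rf_!$-constructions of \cite[\S 22--24]{ECoD} extend to weakly compactifiable $f$ — it says nothing about codescent. Indeed, producing such codescent in a coherent way is precisely the nontrivial content of the enhanced operation map formalism; invoking it on the \cite{ECoD} side is circular. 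The paper's proof avoids this entirely: after reducing to $Y$ a single separated spatial diamond (so $X$ is a quasiseparated locally separated locally spatial diamond), it splits into the quasicompact case (where $g$ is already a fine morphism in the seed category and Theorem~\ref{seed}(iii) applies directly) and the general case, which it handles via the \emph{filtered} colimit $g_!A\simeq\mathrm{colim}_{U}\,g_{U!}j_U^*A$ over quasicompact opens $U\subset X$. This formula follows tautologically for $g_!$ because it commutes with colimits, and it matches $Rg_!A$ because \cite[Definition 22.13]{ECoD} literally \emph{defines} $Rg_!A$ by that filtered colimit. No codescent claim for the \cite{ECoD} side is ever needed, which is exactly the point of that reduction.
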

Taken together, Propositions \ref{compatibleone} and \ref{compatibletwo} complete the proof of Theorem \ref{mainthmtechnical}.
\begin{proof}Let $A \in \mathcal{D}_{\mathrm{\acute{e}t}}(X,\Lambda)$ be some object. Let $Y_\bullet \to Y$ be a v-hypercover by small coproducts of separated spatial diamonds, so we get a pullback square
\[
\xymatrix{X_\bullet\ar[r]^{g_\bullet}\ar[d]^{v_\bullet} & Y_\bullet\ar[d]^{u_\bullet}\\
X\ar[r]^{g} & Y
}
\]
of small v-stacks. There is a natural equivalence $\mathcal{D}_{\mathrm{\acute{e}t}}(Y,\Lambda) \simeq \mathcal{D}_{\mathrm{\acute{e}t,cart}}(Y_\bullet,\Lambda)$, identifying $g_!A$ with the coCartesian section $n \in \boldsymbol{\Delta} \mapsto g_{n!}v_{n}^{\ast} A \in \mathcal{D}_{\mathrm{\acute{e}t}}(Y_n,\Lambda)$, functorially in $A$.  This identification is an immediate consequence of the proper base change isomorphism $u_n^{\ast}g_! \simeq g_{n!}v_{n}^{\ast}$. Comparing this with the discussion on p. 133-134 of \cite{ECoD}, we are reduced to showing that $g_{n!} \simeq Rg_{n!}$ for all $n \geq 0$. In other words, after resetting the notation, we've reduced the general case of the proposition to the special case where $Y$ is a small coproduct of separated spatial diamonds. 

We immediately reduce further to the case where $Y$ is a separated spatial diamond, so $X$ is a separated and quasiseparated locally spatial diamond and $g:X \to Y$ is compactifiable of locally finite dim.trg. In particular, $g$ is a fine morphism in the category $\mathcal{D}\mathrm{ia}^{\mathrm{qs.lsep.lspat}}$, so the relevant $g_!$ functor is already obtained from Output I of our first iteration of DESCENT. If moreover $g$ is quasicompact, then $g$ is a fine morphism of separated spatial diamonds, so we are actually in the setting of Input I of our first iteration, in which case we already know that $g_! \simeq Rg_!$ by Theorem \ref{seed}. 

Otherwise, let $\mathcal{U}$ be the (filtered) collection of all quasicompact open subdiamonds $U \subset X$; note that each $U$ is automatically separated and spatial. For each $U \in \mathcal{U}$, let $j_U: U \to X$ be the evident map, and set $g_U = g \circ j_U$, so in particular $g_U$ is a fine morphism of separated spatial diamonds. Now, on one hand it is completely formal to see that
\begin{align*}
g_! A \simeq &\, g_! \mathrm{colim}_{U\in\mathcal{U}}j_{U!} j_{U}^{\ast} A\\
\simeq &\, \mathrm{colim}_{U\in\mathcal{U}}g_! j_{U!} j_{U}^{\ast} A\\
\simeq &\, \mathrm{colim}_{U\in\mathcal{U}}g_{U!} j_{U}^{\ast} A.
\end{align*}
Here the first and third lines are trivial, and the second line follows from the fact that $g_!$ commutes with all colimits. On the other hand, in \cite{ECoD} $Rg_! A$ is \emph{defined} as $\mathrm{colim}_{U\in\mathcal{U}} Rg_{U!} j_{U}^{\ast} A$, cf. \cite[Definition 22.13]{ECoD} and the discussion immediately afterwards. By the discussion in the previous paragraph, we already know that $g_{U!} \simeq Rg_{U!}$ for all $U \in \mathcal{U}$, so comparing these expressions, we conclude that $g_! \simeq Rg_!$ as desired.
\end{proof}

\subsection{Cohomological smoothness for stacky maps}
Having completed the proof of Theorem \ref{mainthm}, we now switch notation and write $Rg_!$ for the functor constructed therein, and $Rg^!$ for its right adjoint, in agreement with the notation in \cite{FS, ECoD}.

Recall that in Definition \ref{fine}.ii we have defined $\ell$-cohomologically smooth morphisms between decent v-stacks, and that our definition agrees with \cite[Definition IV.1.11]{FS} by the argument in Remark \ref{fineremark}. Both of these definitions are extrinsic, formulated in terms of the existence of charts with various properties. However, we can now give a purely intrinsic definition, parallel to the definition in the 0-truncated case.

\begin{prop}Let $f:X\to Y$ be a fine map of decent v-stacks. Then the following conditions are equivalent.
\begin{enumerate}

\item The map $f$ is $\ell$-cohomologically smooth in the sense of Definition \ref{fine}.ii.

\item The complex $Rf^! \mathbf{F}_{\ell}$ is invertible, the natural map $Rf^! \mathbf{F}_{\ell} \otimes f^{\ast}(-) \to Rf^!(-)$ is an isomorphism, and these statements hold after any decent base change on $Y$.

\item The complex $Rf^! \mathbf{F}_{\ell}$ is invertible and its formation commutes with any decent base change on $Y$.
\end{enumerate}
\end{prop}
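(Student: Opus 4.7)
The plan is to establish the equivalences by proving the forward directions $(1) \Rightarrow (2)$ and $(1) \Rightarrow (3)$ together, and the converse directions $(2) \Rightarrow (1)$ and $(3) \Rightarrow (1)$ together. The external inputs are: the intrinsic characterization of $\ell$-cohomological smoothness for morphisms representable in locally spatial diamonds (\cite[Proposition 23.13]{ECoD}), smooth descent of this property, and the smooth base change isomorphism $\tilde a^* Rf^! \cong R\tilde f^! a^*$ in our enhanced formalism when $a$ is $\ell$-cohomologically smooth.

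For the forward directions, use Remark \ref{fineremark} to refine the chart diagram witnessing $(1)$ so that the square
\[
\xymatrix{U \ar[d]_{\tilde a}\ar[r]^{\tilde f} & V\ar[d]^{a}\\
X\ar[r]^{f} & Y}
\]
is cartesian with $a, \tilde a$ clean charts, and there is a further clean chart $b \colon W \to U$ with $g = \tilde f \circ b$ compactifiable of locally finite dim.trg and $\ell$-cohomologically smooth in the sense of \cite{ECoD}. Since $b$ is likewise $\ell$-cohomologically smooth in \cite{ECoD}, smooth descent of cohomological smoothness implies that $\tilde f$ is $\ell$-cohomologically smooth in \cite{ECoD}. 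Hence $R\tilde f^!\mathbf{F}_{\ell}$ is invertible, the projection formula $R\tilde f^!\mathbf{F}_{\ell}\otimes \tilde f^*(-)\to R\tilde f^!(-)$ is an isomorphism, and both statements persist under arbitrary base change on $V$. The smooth base change $\tilde a^* Rf^!(-) \cong R\tilde f^! a^*(-)$, combined with v-descent of $\mathcal{D}_{\mathrm{\acute{e}t}}(-,\Lambda)$ along the surjective cohomologically smooth map $\tilde a$, transfers invertibility and the projection formula from $\tilde f$ down to $f$. Stability under decent base change on $Y$ is automatic since fineness and the chart diagram base-change compatibly, and one reruns the same argument.

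For the converse directions, fix any fine chart diagram as above. By \cite[Proposition 23.13]{ECoD}, to verify $(1)$ it suffices to show that $g = \tilde f \circ b$ is $\ell$-cohomologically smooth in the sense of \cite{ECoD}, i.e., that $Rg^!\mathbf{F}_{\ell}$ is invertible and its formation commutes with arbitrary base change on $V$. Starting from $(2)$ or $(3)$, smooth base change gives $R\tilde f^!\mathbf{F}_{\ell} \cong \tilde a^* Rf^!\mathbf{F}_{\ell}$, which is invertible. Using the composition $Rg^! = Rb^! \circ R\tilde f^!$ together with the projection formula for $Rb^!$ (valid since $b$ is $\ell$-cohomologically smooth in \cite{ECoD}), we obtain
\[
Rg^!\mathbf{F}_{\ell} \cong Rb^!\mathbf{F}_{\ell} \otimes b^* R\tilde f^!\mathbf{F}_{\ell},
\]
which is invertible. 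Base change compatibility of $Rg^!\mathbf{F}_{\ell}$ along any $V' \to V$ follows from the corresponding compatibility for $R\tilde f^!\mathbf{F}_{\ell}$ (inherited from hypothesis $(3)$, or from the projection formula in $(2)$ applied to the base-changed $f$) combined with the automatic base change compatibility of $Rb^!\mathbf{F}_{\ell}$ from cohomological smoothness of $b$.

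The main obstacle is the smooth base change isomorphism $\tilde a^* Rf^! \cong R\tilde f^! a^*$ in our enhanced setting; this is the essential structural ingredient that allows passing between $Rf^!$ for the stacky morphism $f$ and $R\tilde f^!$ for its $0$-truncated restriction. It follows from the associativity identity $R(f \circ \tilde a)^! \cong R(a \circ \tilde f)^!$ encoded in Theorem \ref{mainthm}, together with the identifications $Ra^!(-) \cong L_a \otimes a^*(-)$ and $R\tilde a^!(-) \cong L_{\tilde a} \otimes \tilde a^*(-)$ for invertible $L_a$ and $L_{\tilde a}$, which express the cohomological smoothness of $a$ and its pullback $\tilde a$. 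Once this exchange is in hand, all remaining steps are direct applications of v-descent of $\mathcal{D}_{\mathrm{\acute{e}t}}(-,\Lambda)$, ECoD's intrinsic characterization in the $0$-truncated case, and the projection formulas supplied by Theorem \ref{mainthm}.
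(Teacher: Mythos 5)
Your proposal is essentially correct and follows the same strategy as the paper's proof: both transfer the invertibility and base-change/projection-formula properties between $Rf^!$ for the stacky $f$ and the corresponding representable approximation via the purity identity $Rb^!(-)\cong Rb^!\mathbf{F}_\ell\otimes b^*(-)$ for cohomologically smooth charts, combined with the associativity $R(f\circ b)^!\cong Rb^!\circ Rf^!$, and both appeal to the intrinsic characterization of cohomological smoothness for representable morphisms. The one structural difference is cosmetic: the paper computes directly $b^*Rf^!\mathbf{F}_\ell\cong Rb^!\mathbf{F}_\ell^{-1}\otimes Rg^!Ra^!\mathbf{F}_\ell$ at the level of the chart $b\colon W\to X$ and detects invertibility v-locally, whereas you first pass through the cartesian base change $\tilde f\colon U=X\times_Y V\to V$, invoke smooth descent of cohomological smoothness on the source to see $\tilde f$ is smooth, and then package the purity computation as a ``smooth base change'' equivalence $\tilde a^*Rf^!\cong R\tilde f^!a^*$. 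Both routes are valid; the paper's is slightly leaner, while yours isolates a reusable smooth base change statement that may be clarifying. One point worth making explicit in your writeup: the identity $\tilde a^*Rf^!\cong R\tilde f^!a^*$ is not purely formal from composability and purity---one also needs the coprojection formula $R\tilde f^!\,R\mathscr{H}om(A,B)\cong R\mathscr{H}om(\tilde f^*A, R\tilde f^!B)$ (the right adjoint of the projection formula from Theorem \ref{mainthm}) and the compatibility $L_{\tilde a}\cong\tilde f^*L_a$ of dualizing objects under base change along the smooth $a$; you gesture at both but it is worth spelling out, since this is exactly where the projection formula of the new formalism is used.
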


\begin{proof} For (1)$\implies$(2), assume we have charts $a\from V\to Y$, $b\from W\to X$, and an $\ell$-cohomologically smooth morphism $g\from W\to V$ as in the commutative diagram \eqref{EqCommDiagFine}(ii).   Using the $\ell$-cohomological smoothness of $a$, $b$, and $g$, we have $b^*Rf^!\mathbf{F}_{\ell}\isom Rb^!\mathbf{F}_{\ell}^{-1}\otimes Rb^!Rf^!\mathbf{F}_{\ell}\isom Rb^!\mathbf{F}_{\ell}^{-1}\otimes Rg^!Ra^!\mathbf{F}_{\ell}$ is invertible, which implies invertibility of $Rf^!\mathbf{F}_{\ell}$ since invertibility can be detected v-locally.   For the second part of the claim, suppose $A\in D_{\et}(Y,\mathbf{F}_{\ell})$.  It is enough to check that $Rf^!\mathbf{F}_\ell\otimes f^*A\to Rf^!A$ is an isomorphism after pullback through $b$,  and one sees after twisting by $Rb^!\mathbf{F}_\ell$ that this is equivalent to the statement that $Rg^!Ra^!\mathbf{F}_\ell\otimes g^*a^*A\to Rg^!Ra^!A$ is an isomorphism, which is true because $a\circ g$ is $\ell$-cohomologically smooth.   Since the property (1) is stable under decent base change on $Y$, so are the properties we have just verified.

Now we prove (2)$\implies$(1):  Let $a\from V\to Y$ be a chart, and let $b\from W\to X$ be the base change of $a$ through $f$,  so that once again we have a diagram as in \eqref{EqCommDiagFine}.  By hypothesis, $Rg^!\Lambda$ is invertible, and $Rg^!\mathbf{F}_\ell\otimes g^*(-)\to Rg^!(-)$ is an isomorphism.   Let $h\from U\to W$ be a chart.  Applying $h^!\isom h^!\mathbf{F}_\ell\otimes h^*$, we find that $R(g\circ h)^!\mathbf{F}_\ell\otimes (g\circ h)^*(-)\to R(g\circ h)^!(-)$ is an isomorphism.   This means that $g\circ h$ is an $\ell$-cohomologically smooth morphism between charts for $X$ and $Y$ respectively,  and so $f$ is $\ell$-cohomologically smooth in the sense of Definition \ref{fine}.ii.  

By the discussion immediately following \cite[Proposition IV.2.33]{FS}, a compactifiable morphism $g$ between locally spatial diamonds with finite dim. trg is $\ell$-cohomologically smooth (in the sense of \cite{ECoD}) if and only if $Rg^!\mathbf{F}_\ell$ is invertible and its formation commutes with any base change.    The above arguments can now be adapted to give the equivalence between (1) and (3).

\end{proof}

\bibliographystyle{amsalpha}
\bibliography{enhancednew}

\end{document}